\numberwithin{equation}{section}
\theoremstyle{plain}
\newtheorem{Theorem}{Theorem}[section]
\newtheorem{Lemma}[Theorem]{Lemma}
\newtheorem{Prop}[Theorem]{Proposition}
\theoremstyle{remark}
\newtheorem{Rem}[Theorem]{Remark}
\theoremstyle{definition}
\newtheorem{Exa}[Theorem]{Example}
\DeclareMathOperator{\N}{\mathbb{N}}
\DeclareMathOperator{\R}{\mathbb{R}}
\DeclareMathOperator{\Prob}{\mathbb{P}}
\DeclareMathOperator{\E}{\mathbb{E}}
\DeclareMathOperator{\1}{\mathbbm{1}}
\title{Power and Exponential Moments of the Number of Visits and Related Quantities for Perturbed Random Walks}
\date{\today}
\author{Gerold Alsmeyer\footnote{Institut f\"{u}r Mathematische Statistik,
Westf\"{a}lische Wilhelms-Universit\"{a}t M\"{u}nster, 48149
M\"{u}nster, Germany, e-mail: gerolda@math.uni-muenster.de}\,,
Alexander Iksanov\footnote{ Faculty of Cybernetics, National T.\
Shevchenko University of Kiev, 01033 Kiev, Ukraine, e-mail:
iksan@univ.kiev.ua}\ \,and Matthias
Meiners\footnote{ Institut f\"{u}r Mathematische Statistik,
Westf\"{a}lische Wilhelms-Universit\"{a}t M\"{u}nster, 48149
M\"{u}nster, Germany, e-mail: mmeiners@math.uni-muenster.de}}
\begin{document}

\thispagestyle{empty}
\maketitle

\begin{abstract}
Let $(\xi_1,\eta_1),(\xi_2,\eta_2),\ldots$ be a sequence of
i.i.d.\;copies of a random vector $(\xi,\eta)$ taking values in
$\R^2$, and let $S_n := \xi_1+\ldots+\xi_n$. The sequence
$(S_{n-1} + \eta_n)_{n \geq 1}$ is then called perturbed random
walk.

We study random quantities defined in terms of the perturbed
random walk: $\tau(x)$, the first time the perturbed random walk
exits the interval $(-\infty,x]$, $N(x)$, the number of visits to
the interval $(-\infty,x]$, and $\rho(x)$, the last time the
perturbed random walk visits the interval $(-\infty,x]$. We provide
criteria for the a.s.\ finiteness and for the finiteness
of exponential moments of these quantities. Further, we provide
criteria for the finiteness of power moments of $N(x)$ and $\rho(x)$.

In the course of the proofs of our main results, we investigate
the finiteness of power and exponential moments of shot-noise processes
and provide complete criteria for both, power and exponential moments.

\noindent
\emph{2010 Mathematics Subject Classification}:					Primary:    60G50       \\		
\hphantom{\emph{2010 Mathematics Subject Classification:}}		Secondary:  60G40			

\noindent
{\bf Keywords}: First passage time $\cdot$ Last exit time $\cdot$ Number of visits
$\cdot$ Perturbed random walk $\cdot$ Random walk $\cdot$ Renewal theory $\cdot$ Shot-noise process
\end{abstract}

\section{Introduction}  \label{sec:intro}

The purpose of this article is to study the moments of certain basic renewal-theoretic quantities for a class of perturbed random walks formally defined below. Such random sequences arise as derived processes in various areas of applied probability and we refer to Sect.\;\ref{subsec:applications} for a number of examples.
It is an interesting question and in fact the main motivation behind this work to what extent classical moment results for ordinary random walks must be adjusted in the presence of a perturbating sequence.

\subsection{Setup}	\label{subsec:setup}

Let $(\xi_1, \eta_1), (\xi_2, \eta_2), \ldots$ be a sequence of i.i.d.\;two-dimensional random vectors with generic copy $(\xi,\eta)$.
For notational convenience, we assume that $(\xi,\eta)$ is defined on the same probability space as the $(\xi_k,\eta_k)$, $k \geq 1$ and independent of this sequence.
No condition is imposed on the dependence structure between $\xi$ and $\eta$. Let $(S_n)_{n \geq 0}$ be the zero-delayed ordinary random walk with increments $\xi_{n}$ for $n\in\N$, \textit{i.e.}, $S_0 = 0$ and $S_n = \xi_1+\ldots+\xi_n$, $n \in \N$. Then define its perturbed variant $(T_{n})_{n \geq 1}$, called \emph{perturbed random walk (PRW)}, by
\begin{equation}    \label{eq:PRW}
T_n := S_{n-1} + \eta_n,	\quad	n \in \N.
\end{equation}
It has appeared in a number of recent publications, see for instance \cite{AraGlynn:06, GneIksMar:10b, Iks:07, PalZwart:10}. Here we should mention that, motivated by certain problems in sequential statistics, a very different class of perturbations, which may roughly be characterized by having slowly varying paths in a stochastic sense, has been considered under the label ``nonlinear renewal theory'', see \cite{LaiSieg:77,LaiSieg:79,Woodroofe:82} and \cite[Section 6]{Gut:09}.

For $x\in\R$, define the level $x$ first passage time
\begin{equation}    \label{eq:tau}
\tau(x) := \inf\{n\in\N: T_n>x\},
\end{equation}
the number of visits to $(-\infty, x]$
\begin{equation}    \label{eq:N}
N(x) := \#\{n\in\N: T_n\leq x\},
\end{equation}
and the associated last exit time
\begin{equation}\label{eq:rho}
\rho(x) ~:=~ \sup\{n \in \N: T_n \leq x\}
\end{equation}
with the usual conventions that $\sup \emptyset := 0$ and $\inf \emptyset := \infty$.Our aim is to find criteria for the a.s.\ finiteness of these quantities and for the finiteness of their power and exponential moments.

Let us further denote by $\tau^*(x)$, $N^*(x)$ and $\rho^*(x)$ the corresponding quantities for the ordinary random walk $(S_{n})_{n \geq 0}$ which is obtained in the special case $\eta=0$ a.s.\ after a time shift. If $\xi=0$, then $(T_{n})_{n \geq 1}$ reduces to a sequence of i.i.d.\;r.v.'s.
In this case, $N(x)=\rho(x)=\infty$ a.s.\ and $\tau(x)$ has a geometric distribution whenever $0 < \Prob\{\eta\leq x\} < 1$.
Neither of the two afore-mentioned cases will be subject of our analysis and therefore be ruled out by making the
\noindent
\begin{center}
\textbf{Standing Assumption}:	\quad	$\Prob\{\xi=0\}<1$ and $\Prob\{\eta=0\}<1$.
\end{center}

\subsection{Examples and Applications}	\label{subsec:applications}

Functionals of PRW's appear in several areas of applied probability as demonstrated by the following examples.

\begin{Exa}[{\it Perpetuities}]
Provided that $\sum_{n \geq 1} e^{T_n}$ is a.s.\;convergent, this sum is called \emph{perpetuity} due to its interpretation as a sum of discounted payment streams in insurance and finance. Perpetuities have received an enormous amount of attention which by now has led to a more or less complete theory. A partial survey of the relevant literature may be found in \cite{AlsIksRoe:09}, for more recent contributions see \cite{FillHuber:10, Hitc:10, HitcWes:09, HitcWes:11}. Presumably one of the most challenging open problems in the area is to provide sufficient (and close to necessary) conditions for the absolute continuity of the law of a perpetuity. In the light of serious complications that already arise in the ``simple'' case
$\xi={\rm const}<0$ (see \cite{AlsIksRoe:09} for more information), there is only little hope for the issue being settled in the near future.
\end{Exa}

\begin{Exa}[{\it The Bernoulli sieve}]
The \emph{Bernoulli sieve} is an infinite occupancy scheme in a random environment $(P_k)_{k\ge 1}$, where
\begin{equation}\label{17}
P_k := W_1W_2\cdots W_{k-1}(1-W_k), \quad k\in\N,
\end{equation}
and $(W_k)_{k\ge 1}$ are independent copies of a random variable $W$ taking values in $(0,1)$. One may think of balls that, given $(P_k)_{k\ge 1}$, are independently placed into one of infinitely many boxes $1,2,3,\ldots$, the probability for picking box $k$ being $P_{k}$.  Assuming that the number of balls equals $n$, denote by $K_n$ the number of nonempty boxes. If the law of $|\log W|$ is non-lattice, it was shown in \cite{GneIksMar:10b} that the weak convergence of $K_n$, properly centered and normalized, is completely determined by the weak convergence of
\begin{eqnarray*}
N(x)    &:=&    \#\{k\in\N: P_k \geq e^{-x}\}\\
&=&\#\{k\in\N: W_1\cdots W_{k-1}(1-W_k) \geq e^{-x}\},
\quad  x>0,
\end{eqnarray*}
again properly centered and normalized. Notice that $N(x)$ is the number of visits to $(-\infty, x]$ by the PRW generated by the couples $(|\log W_1|,|\log(1-W_1)|), (|\log W_2|, |\log(1-W_2)|), \ldots$. A summary of known results including relevant literature for the Bernoulli sieve can be found in the recent survey \cite{GneIksMar:10a}.
\end{Exa}

\begin{Exa}[{\it Regenerative processes}]
Let $(W(t))_{t \geq 0}$ be a c\`{a}dl\`{a}g process starting at $W(0) = 0$ and drifting to
$-\infty$ a.s. Suppose there exists a zero-delayed renewal sequence of random epochs $(\tau_n)_{n \geq 0}$ such that the segments (also called cycles)
\begin{equation*}
(W(t))_{0 \leq t \leq \tau_1},\ (W(\tau_1+ t) -W(\tau_1))_{0 \leq t \leq \tau_2-\tau_1}, \ldots
\end{equation*}
are i.i.d. Then $(W(t))_{t \geq 0}$ is a \emph{(strong-sense) regenerative process}, see \cite{Asmussen:03}. For $n\in\N$, put
\begin{equation*}
\xi_n := W(\tau_n)-W(\tau_{n-1})	\quad	\text{and}	\quad
\eta_n := \sup_{\tau_{n-1}\leq t<\tau_n} W(t)-W(\tau_{n-1}).
\end{equation*}
Then $(\xi_1,\eta_1), (\xi_2,\eta_2), \ldots$ are i.i.d., and
\begin{equation*}
\sup_{t \geq 0} W(t) ~=~ \sup_{n \geq1}
(\xi_1+\ldots+\xi_{n-1}+\eta_n),
\end{equation*}
\textit{i.e.}, the supremum of the regenerative process can be represented as the supremum of an appropriate PRW. The supremum $M$, say, of a PRW is a relatively simple functional that has received considerable attention in the literature. For instance, the tail behavior of $M$ was
investigated in \cite{AraGlynn:06, Goldie:91, HaoTangWei:09, Iks:07, PalZwart:07, PalZwart:10}. Some moment results on $M$ can be found in \cite{AlsIks:09, AlsIksRoe:09}.
\end{Exa}

\begin{Exa}[{\it Queues and branching processes}]
Suppose that $\xi$ and $\eta$ are both nonnegative and define, for $t \geq 0$,
\begin{equation*}
R(t) ~:=~ \sum_{k=0}^\infty \1_{\{S_k\leq t<S_k+\eta_{k+1}\}}
= \tau^*(t)-N(t),
\quad t \geq 0.
\end{equation*}
In a ${\rm GI}/{\rm G}/\infty$-queuing system, where customers
arrive at times $S_{0} = 0 < S_{1} < S_{2} < \ldots$ and are immediately served
by one of infinitely many idle servers, the service time of the $k$th customer being $\eta_{k+1}$,
$R(t)$ gives the
number of busy servers at time $t \geq 0$. Another interpretation
of $R(t)$ emerges in the context of a degenerate pure immigration
Bellman--Harris branching process in which each individual is
sterile, immigration occurs at the epochs $S_1$, $S_2$ etc., and
the lifetimes of the ancestor and the subsequent immigrants are
$\eta_{1},\eta_{2},\ldots$. Then $R(t)$ gives the number of
particles alive at time $t \geq 0$. The process $(R(t))_{t \geq 0}$ was also
used to model the number of active sessions in a computer network 
\cite{Konstantopoulos+Lin:98,Mikosch+Resnick:06}.
\end{Exa}

\section{Main Results}  \label{sec:main_results}

\subsection{Almost Sure Finiteness}

It is well-known that a nontrivial zero-delayed random walk $(S_{n})_{n \geq 0}$ (\textit{i.e.}\ a random walk starting at the origin with increment distribution not degenerate at $0$) exhibits one of the following three regimes:
\begin{enumerate}
\item[1)] drift to $+\infty$ (positive divergence): $\lim_{n \to \infty}S_{n}=\infty$ a.s.;
\item[2)] drift to $-\infty$ (negative divergence): $\lim_{n \to \infty}S_{n}=-\infty$ a.s.;
\item[3)] oscillation: $\liminf_{n \to \infty}S_{n}=-\infty$ and $\limsup_{n \to \infty}S_{n}=\infty$ a.s.
\end{enumerate}
PRW's exhibit the same trichotomy. In order to state the result precisely some further notation is needed. As usual, let $\xi^+ = \max(\xi,0)$ and $\xi^- = \max(-\xi,0)$. Then, for $x>0$, define
\begin{equation*}    \label{eq:A(x)}
A_\pm(x) := \int_0^x \Prob\{\pm \xi>y\} \, {\rm d}y = \E \min (\xi^\pm, x)
\quad	\text{and}	\quad	J_\pm(x):= \frac{x}{A_\pm(x)},
\end{equation*}
whenever the denominators are nonzero. Notice that $J_{\pm}(x)$ for $x>0$ is well-defined iff $\Prob\{\pm \xi > 0\} > 0$.
In this case, we define $J_{\pm}(0) := \Prob\{\pm \xi > 0\}^{-1}$.
The following theorem, though not stated explicitly in \cite{GolMal:00}, can be read off from the results obtained there.

\begin{Theorem} \label{Thm:global}
Any PRW $(T_{n})_{n \geq 1}$ satisfying the standing assumption is either positively divergent, negatively divergent or oscillating.
Positive divergence takes place iff
\begin{equation}    \label{eq:drift_+infty}
\lim_{n \to \infty} S_n=\infty \quad \text{and} \quad
\E J_+(\eta^-)<\infty,
\end{equation}
while negative divergence takes place iff
\begin{equation}    \label{eq:drift_-infty}
\lim_{n \to \infty} S_n=-\infty \text{ a.s.} \quad \text{and} \quad
\E J_-(\eta^+)<\infty.
\end{equation}
Oscillation occurs in the remaining cases, thus iff either
\begin{equation}\label{eq:oscillation1}
-\infty = \liminf_{n \to \infty} S_n < \limsup_{n \to \infty} S_n = \infty \text{ a.s.,}
\end{equation}
or
\begin{equation}\label{eq:oscillation2}
\lim_{n \to \infty} S_n=\infty \text{ a.s.} \quad \text{and} \quad
\E J_+(\eta^-)=\infty,
\end{equation}
or
\begin{equation}\label{eq:oscillation3}
\lim_{n \to \infty} S_n=-\infty \text{ a.s.}    \quad   \text{and} \quad
\E J_-(\eta^+)=\infty.
\end{equation}
\end{Theorem}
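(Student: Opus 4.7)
The plan is to combine Kolmogorov's $0$-$1$ law (yielding the trichotomy) with an elementary analysis of the decomposition $T_n = S_{n-1}+\eta_n$ based on the already-known trichotomy for $(S_n)$, and then to invoke the Erickson-type renewal criteria of \cite{GolMal:00} in the two divergent cases.

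Both $\limsup_n T_n$ and $\liminf_n T_n$ are tail functionals of the i.i.d.\;sequence $((\xi_k,\eta_k))_{k\ge 1}$, hence a.s.\;constant in $[-\infty,\infty]$ by Kolmogorov's $0$-$1$ law. Using $S_n = T_{n+1}-\eta_{n+1}$ together with the standing assumption $\Prob\{\xi=0\}<1$, one rules out the case in which both constants are finite, leaving the three asymptotic regimes of the theorem. Next, if $\liminf_n S_n = -\infty$ a.s., pick $x_0\in\R$ with $p:=\Prob\{\eta\le x_0\}>0$. Since $\eta_{n+1}$ is independent of $\F_n:=\sigma((\xi_k,\eta_k):k\le n)$, the conditional second Borel-Cantelli lemma applied to $A_n:=\{S_n\le y-x_0,\,\eta_{n+1}\le x_0\}$ (whose conditional probabilities given $\F_n$ sum to $p\cdot\#\{n:S_n\le y-x_0\}=\infty$ a.s.) gives $T_{n+1}\le y$ infinitely often for every $y\in\R$, whence $\liminf_n T_n = -\infty$ a.s.; the symmetric argument handles $\limsup_n T_n$. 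This establishes \eqref{eq:oscillation1}.

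Assume now $S_n\to\infty$ a.s.\;(the case $S_n\to-\infty$ being symmetric and yielding \eqref{eq:drift_-infty}/\eqref{eq:oscillation3}). From $S_{n-1}-\eta_n^-\le T_n\le S_{n-1}+\eta_n^+$ and $S_{n-1}\to\infty$, positive divergence \eqref{eq:drift_+infty} holds iff $\{\eta_n^->S_{n-1}-K\}$ occurs only finitely often for every $K\in\R$; otherwise one obtains $\liminf_n T_n=-\infty$ while $\limsup_n T_n=\infty$, i.e.\;the oscillatory regime \eqref{eq:oscillation2}. Writing $G(t):=\Prob\{\eta^->t\}$ and using that $\eta_n$ is independent of the $\sigma$-field generated by $((\xi_k,\eta_k):k<n)$ (which contains $S_{n-1}$), conditional Borel-Cantelli recasts the question as whether
\begin{equation*}
\sum_{n\ge 1}G(S_{n-1}-K) < \infty \quad \text{a.s.}
\end{equation*}
The crucial renewal-theoretic input from \cite{GolMal:00} then identifies this a.s.\;convergence with the moment condition $\E J_+(\eta^-)<\infty$: under $S_n\to\infty$ a.s., the typical growth scale of $S_n$ is governed by $A_+$ even when $\E\xi$ fails to exist, and $J_+(x)=x/A_+(x)$ supplies the correct integrand against the law of $\eta^-$.

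The main obstacle is this last renewal step. It cannot be reduced to an SLLN argument because $\E\xi$ may fail to exist when $S_n\to\infty$ a.s., and one must instead invoke the Erickson-Kesten analysis of transient random walks to quantify the rate of divergence of $S_n$ and to translate the a.s.\;summability of conditional tails into the integral criterion $\E J_+(\eta^-)<\infty$. The remaining ingredients are standard $0$-$1$ law and Borel-Cantelli considerations.
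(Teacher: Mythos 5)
Your approach is substantively the same as the paper's---both defer to Goldie and Maller \cite{GolMal:00} for the central renewal-theoretic criterion, and you explicitly identify that as the crucial input---but you unpack the argument further, and two points in your sketch need tightening.

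\emph{(i)} The constancy of $\limsup_n T_n$ and $\liminf_n T_n$ in $[-\infty,\infty]$ does not follow from Kolmogorov's $0$-$1$ law: these quantities are not tail functionals, since modifying $(\xi_1,\eta_1)$ shifts every $T_n$ with $n\ge 2$ by the constant $\xi_1'-\xi_1$ and hence shifts the limit superior by that amount. Kolmogorov only gives that each of $\{\limsup T_n=+\infty\}$, $\{\limsup T_n=-\infty\}$, $\{\limsup T_n\in\R\}$ has probability $0$ or $1$. The constancy you want follows from the Hewitt--Savage $0$-$1$ law (the limit superior is invariant under finite permutations of $((\xi_k,\eta_k))_k$); alternatively, and this is what the paper does, one may bypass constancy altogether by quoting the dichotomy from p.\;1215 of \cite{GolMal:00} that $\limsup_n T_n<\infty$ a.s.\;already forces $T_n\to-\infty$ a.s. (together with its mirror image). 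Related to this, the remark that ``ruling out both constants finite leaves the three regimes'' is too quick: one also has to exclude the mixed configurations (one endpoint finite, the other infinite); these are in fact handled by your subsequent case analysis on $(S_n)$, but that should be said explicitly rather than attributed to the $0$-$1$ law step. \emph{(ii)} In the case $S_n\to\infty$, the assertion ``otherwise one obtains $\liminf_n T_n=-\infty$'' is not a logical consequence of what precedes it: if $\{\eta_n^->S_{n-1}-K\}$ occurs infinitely often for \emph{some} $K$, this gives only $\liminf_n T_n\le K$, and since the events $\{\eta_n^->S_{n-1}-K\}$ \emph{increase} with $K$, monotonicity points the wrong way for concluding ``i.o.\;for all $K$''. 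Establishing that the divergence of $\sum_n \Prob\{\eta^->S_{n-1}-K\mid\mathcal F_{n-1}\}$ is independent of $K$ is precisely the Erickson/Goldie--Maller renewal step; you flag it as the main obstacle later, but the intermediate sentence as written presents it as automatic. Once both points are repaired, your sketch is a correct reconstruction of what lies behind the paper's citations.
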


\begin{Rem}	\label{Rem:trichotomy}
As a consequence of Theorem\;\ref{Thm:global} it should be observed that a PRW $(T_n)_{n \geq 1}$ may oscillate even if the corresponding ordinary random walk $(S_n)_{n \geq 0}$ drifts to $\pm \infty$.
\end{Rem}

In view of the previous result it is natural to take a look at the a.s.\ finiteness of the first passage times $\tau(x)$. Plainly, if $\limsup_{n\to\infty}T_{n}=\infty$ a.s., then $\tau(x)<\infty$ a.s.\ for all $x\in\R$. On the other hand, one might expect in the opposite case, \textit{viz.} $\lim_{n\to\infty}T_{n}=-\infty$ a.s., that $\Prob\{\tau(x)=\infty\}>0$ for all $x\ge 0$, for this holds true for ordinary random walks.
Namely, if $\lim_{n\to\infty}S_{n}=-\infty$ a.s., then $\Prob\{\sup_{n \geq 1}S_{n}\leq 0\}=\Prob\{\tau^*=\infty\}>0$.
The following result shows that this conclusion may fail for a PRW. It further provides a criterion for the a.s.\ finiteness of $\tau(x)$ formulated in terms of $(\xi,\eta)$.

\begin{Theorem} \label{Thm:finiteness_tau}
Let $(T_{n})_{n \geq 1}$ be negatively divergent and $x\in\R$. Then $\tau(x)<\infty$ a.s.\ iff $\Prob\{\xi<0, \eta \leq x\}=0$. Furthermore, $\Prob\{\eta \leq x\} < 1$ holds true in this case.
\end{Theorem}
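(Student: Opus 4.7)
The plan is to analyze the supremum $M := \sup_{n \geq 1} T_n$, so that $\{\tau(x) = \infty\} = \{M \leq x\}$. By Theorem~\ref{Thm:global} the negative divergence assumption forces $T_n \to -\infty$ and $S_n \to -\infty$ a.s., whence $M < \infty$ a.s.\ and $F(y) := \Prob\{M \leq y\}$ satisfies $F(y) \to 1$ as $y \to \infty$. The statement then amounts to $F(x) = 0$ iff $\Prob\{\xi < 0, \eta \leq x\} = 0$.

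For the direction ``$\Prob\{\xi < 0, \eta \leq x\} = 0 \Rightarrow F(x) = 0$'' I would show by induction on $n \geq 0$ that $S_n \geq 0$ holds a.s.\ on $\{M \leq x\}$. The base $n=0$ is trivial; for the step, on $\{M \leq x\}$ the inductive hypothesis $S_{n-1} \geq 0$ together with $T_n \leq x$ gives $\eta_n \leq x - S_{n-1} \leq x$, and the standing hypothesis then forces $\xi_n \geq 0$ a.s., so $S_n \geq S_{n-1} \geq 0$. Since $S_n \to -\infty$ a.s.\ rules out $S_n \geq 0$ for every $n$, this yields $\Prob\{M \leq x\} = 0$.

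For the converse I would argue contrapositively. Assume $\Prob\{\xi < 0, \eta \leq x\} > 0$ and choose $\epsilon > 0$ with $q := \Prob\{\xi \leq -\epsilon, \eta \leq x\} > 0$, possible since the events $\{\xi \leq -\epsilon, \eta \leq x\}$ increase to $\{\xi < 0, \eta \leq x\}$ as $\epsilon \downarrow 0$. On $B_n := \bigcap_{k=1}^n \{\xi_k \leq -\epsilon,\, \eta_k \leq x\}$, which has probability $q^n$ by independence, one has $T_k \leq x$ for each $1 \leq k \leq n$ and $S_n \leq -n\epsilon$. The crucial tool is the renewal-type decomposition $T_{n+k} = S_n + \widetilde T_k$ for $k \geq 1$, with $\widetilde T_k := \xi_{n+1} + \ldots + \xi_{n+k-1} + \eta_{n+k}$ a PRW in the shifted i.i.d.\ variables, independent of $\mathcal F_n := \sigma(\xi_i, \eta_i : 1 \leq i \leq n)$ and distributed as $(T_k)_{k \geq 1}$. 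Conditioning on $\mathcal F_n$ and using monotonicity of $F$ together with $x - S_n \geq x + n\epsilon$ on $B_n$ gives
\begin{equation*}
F(x) \,=\, \E\bigl[\1\{T_1 \leq x, \ldots, T_n \leq x\}\, F(x-S_n)\bigr] \,\geq\, q^n\, F(x + n\epsilon),
\end{equation*}
which is strictly positive for $n$ chosen so that $F(x + n\epsilon) > 0$, contradicting $F(x) = 0$.

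The supplementary assertion $\Prob\{\eta \leq x\} < 1$ is immediate: if $\Prob\{\eta \leq x\} = 1$, then the equivalence just proved would give $\Prob\{\xi < 0\} = \Prob\{\xi < 0, \eta \leq x\} = 0$, so $\xi \geq 0$ a.s., contradicting $S_n \to -\infty$ a.s. The hard part is the converse direction: constructing $B_n$ that simultaneously keeps $T_1, \ldots, T_n \leq x$ and drives $S_n$ arbitrarily negative, together with exploiting the strong-Markov-type restart property to bound $F(x)$ from below by $q^n F(x + n\epsilon)$.
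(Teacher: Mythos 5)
Your proposal is correct and shares the paper's three-part strategy: prove sufficiency directly, prove necessity by contraposition with an $\epsilon$-event that drives $S_n$ down while keeping $T_1,\ldots,T_n\leq x$, and deduce $\Prob\{\eta\leq x\}<1$ from $S_n\to-\infty$ a.s. The one substantive difference lies in the sufficiency direction. You argue by induction that $\{\sup_{n\geq1}T_n\leq x\}\subseteq\bigcap_{n\geq0}\{S_n\geq0\}$ up to a null set, which has probability zero because $S_n\to-\infty$; this is clean but delivers only a.s.\ finiteness. The paper's Lemma~\ref{Lem:tau_leq_geometric} instead dominates $\tau(x)$ by the geometric stopping time $\nu:=\inf\{n\geq1:\eta_n>x\}$ --- on $\{\nu=n\}$ the hypothesis $\Prob\{\xi<0,\eta\leq x\}=0$ forces $\xi_1,\dots,\xi_{n-1}\geq0$ a.s., so $T_n=S_{n-1}+\eta_n\geq\eta_n>x$ --- and thereby obtains the quantitative tail bound $\Prob\{\tau(x)>n\}\leq\Prob\{\eta\leq x\}^n$. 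That domination $\tau(x)\leq\nu(x)$ is exactly what the paper then reuses in the proof of Theorem~\ref{Thm:exponential_tau}(a) to get exponential moments of $\tau(x)$, so the slightly heavier lemma is earning its keep. Your contrapositive bound $F(x)\geq q^n\,F(x+n\epsilon)$, with $F(y):=\Prob\{\sup_{n\geq1}T_n\leq y\}\uparrow1$, is the same estimate the paper makes, just expressed through the distribution function of the supremum rather than with a fixed $y$ (with $\Prob\{\sup_n T_n\leq y\}>0$) and an explicit $m$ satisfying $m\epsilon\geq y-x$.
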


In order to establish a criterion for the a.s.\ finiteness of the r.v.'s $N(x)$ and $\rho(x)$, it only takes to observe that, if one of those is a.s.\ finite for some $x$,
then $\liminf_{n \to \infty}T_{n}>-\infty$ a.s. Hence, by Theorem\;\ref{Thm:global}, $(T_{n})_{n \geq 1}$ must be positively divergent. Since the converse holds trivially true, we can state the following result analogous to the case of ordinary random walks.

\begin{Theorem} \label{Thm:finiteness_rho}
The following assertions are equivalent:
\begin{itemize}
\item[(i)] $(T_{n})_{n \geq 1}$ is positively divergent.
\item[(ii)] $N(x)<\infty$ a.s.\ for some/all $x\in\R$.
\item[(iii)] $\rho(x)<\infty$ a.s.\ for some/all $x\in\R$.
\end{itemize}
\end{Theorem}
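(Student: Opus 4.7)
The proof will be essentially immediate from the trichotomy provided by Theorem\;\ref{Thm:global}, as the authors already indicate in the paragraph preceding the statement. The plan has two ingredients: a preliminary pathwise remark that $N(x)$ and $\rho(x)$ are simultaneously finite or infinite, followed by a short dichotomy argument combining the definition of positive divergence with Theorem\;\ref{Thm:global}.

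Concretely, I would first observe that both $\{N(x) < \infty\}$ and $\{\rho(x) < \infty\}$ coincide pathwise with the event that the index set $\{n \in \N : T_n \leq x\}$ is finite: indeed, if this set is finite, then $N(x)$ is its cardinality and $\rho(x)$ is its maximum (or $0$ if empty); conversely, either $N(x)=\infty$ or $\rho(x)=\infty$ forces the set to be infinite. In particular, the "some $x$" and "all $x$" statements in (ii) are a priori equivalent to the corresponding statements in (iii), and it suffices to prove (i) $\Leftrightarrow$ (iii).

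For (i) $\Rightarrow$ (iii) in the strong form "for all $x$", positive divergence gives $T_n \to \infty$ a.s., so for every $x \in \R$ only finitely many $n$ satisfy $T_n \leq x$, whence $\rho(x) < \infty$ a.s. For the converse in the weak form "for some $x$", assume $\rho(x_0) < \infty$ a.s.\ for some $x_0 \in \R$. Then a.s.\ there is a random index $n_0$ with $T_n > x_0$ for all $n > n_0$, so $\liminf_{n \to \infty} T_n \geq x_0 > -\infty$ a.s. By the trichotomy in Theorem\;\ref{Thm:global}, $(T_n)_{n \geq 1}$ is positively divergent, negatively divergent, or oscillating; since both negative divergence and oscillation force $\liminf_{n \to \infty} T_n = -\infty$ a.s., only positive divergence survives.

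There is no real obstacle here: once the pathwise identification of the events $\{N(x) < \infty\}$ and $\{\rho(x) < \infty\}$ is made, the entire content of Theorem\;\ref{Thm:finiteness_rho} is packaged in Theorem\;\ref{Thm:global}. The only point requiring minor care is to ensure that the "some $x$" hypothesis is strong enough to rule out the oscillating case, which is handled above by noting that $\liminf T_n \geq x_0$ is incompatible with $\liminf T_n = -\infty$.
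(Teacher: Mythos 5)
Your proof is correct and follows essentially the same approach the paper sketches in the paragraph immediately preceding the theorem statement: the pathwise observation that $\{N(x)<\infty\}$ and $\{\rho(x)<\infty\}$ coincide with $\{\{n : T_n \leq x\}\text{ is finite}\}$, the trivial implication from positive divergence to finiteness for all $x$, and the use of the trichotomy in Theorem\;\ref{Thm:global} together with $\liminf_{n\to\infty} T_n \geq x_0 > -\infty$ to rule out negative divergence and oscillation in the converse. No differences worth noting.
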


\subsection{Finiteness of Exponential Moments}	\label{subsec:finiteness_exponential}

The following theorems are on finiteness of exponential moments of $\tau(x)$, $N(x)$ and $\rho(x)$.

\begin{Theorem} \label{Thm:exponential_tau}
Let $a>0$ and $x\in\R$.

\vspace{.2cm}\noindent
(a) If $\Prob\{\xi < 0, \eta \leq x\}=0$, then $\E \exp(a\tau(x))<\infty$ iff
\begin{equation}    \label{eq:xi=0_eta_leq_x}
e^{a}\,\Prob\{\xi=0, \eta\leq x\}<1.
\end{equation}
(b) If $\Prob\{\xi < 0, \eta \leq x\}>0$, then
\begin{align}
&	\E \exp(a\tau(x))<\infty,\label{eq:Ee^atau<infty}	\\
&	\E \exp(a\tau(y))<\infty\quad\text{ for all }y\in\R,\label{eq:Ee^atau(y)<infty}	\\
&	\E \exp(a\tau^*)<\infty,\label{eq:E^atau*<infty}	\\
&	R := -\log \inf_{t \geq 0}\,\E e^{-t\xi}\ge a	\label{eq:-loginf E^-txi>=a}
\end{align}
are equivalent assertions.
\end{Theorem}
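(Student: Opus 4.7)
The plan is to handle parts (a) and (b) separately.

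\emph{Part (a).} The key observation is that $\Prob\{\xi<0,\eta\le x\}=0$ forces, on the event $\{\tau(x)>n\}$, both $\xi_k\ge 0$ and $\eta_k\le x$ for every $k\le n$; this follows by induction starting from $T_1=\eta_1\le x$. For the necessity of \eqref{eq:xi=0_eta_leq_x}, note that the event $\{\xi_1=\cdots=\xi_n=0,\,\eta_1,\ldots,\eta_n\le x\}$ has probability $p^n$ (with $p:=\Prob\{\xi=0,\eta\le x\}$) and is contained in $\{\tau(x)>n\}$ because then $T_k=\eta_k\le x$ for every $k\le n$. Hence $\E e^{a\tau(x)}\ge\sum_{n\ge 0}(e^a p)^n$, which diverges unless $e^a p<1$. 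For the sufficiency I would condition on $(\xi_1,\eta_1)$, splitting $\{\tau(x)>1\}$ into the disjoint pieces $\{\xi_1=0,\eta_1\le x\}$ and $\{\xi_1>0,\eta_1\le x\}$, to obtain the functional equation
\begin{equation*}
(1-e^a p)\,u(x)=e^a\Prob\{\eta>x\}+e^a\,\E\bigl[\mathbf{1}_{\{\xi>0,\eta\le x\}}\,u(x-\xi)\bigr]
\end{equation*}
for $u(y):=\E e^{a\tau(y)}$, and then deduce finiteness of $u(x)$ by iterating the relation: each iteration lowers the effective level by a strictly positive amount, and $u(y)$ is controlled by a geometric quantity as $y\to-\infty$ since $\Prob\{\eta\le y\}\to 0$.

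\emph{Part (b).} I would close the cycle \eqref{eq:Ee^atau(y)<infty}$\Rightarrow$\eqref{eq:Ee^atau<infty}$\Rightarrow$\eqref{eq:-loginf E^-txi>=a}$\Rightarrow$\eqref{eq:Ee^atau(y)<infty}, supplemented by the equivalence \eqref{eq:-loginf E^-txi>=a}$\Leftrightarrow$\eqref{eq:E^atau*<infty}. The first implication is trivial, and \eqref{eq:-loginf E^-txi>=a}$\Leftrightarrow$\eqref{eq:E^atau*<infty} is the classical Iglehart--Heyde result on exponential moments of the first strict ascending ladder epoch of an ordinary random walk. For \eqref{eq:Ee^atau<infty}$\Rightarrow$\eqref{eq:-loginf E^-txi>=a}, the hypothesis $\Prob\{\xi<0,\eta\le x\}>0$ supplies the lower bound
\begin{equation*}
\Prob\{\tau(x)>n\}\ge\Prob\{S_1\le 0,\ldots,S_{n-1}\le 0;\ \eta_1,\ldots,\eta_n\le x\},
\end{equation*}
since then $T_k=S_{k-1}+\eta_k\le 0+x=x$ on this event. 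Classical Cram\'er/exponential-tilting asymptotics give that $\Prob\{\max_{k\le n}S_k\le 0\}$ decays exactly like $e^{-Rn}$, while the additional $\eta$-constraint contributes at worst a geometric factor (by conditioning on the sign pattern of the $\xi_k$'s), forcing $a\le R$. For \eqref{eq:-loginf E^-txi>=a}$\Rightarrow$\eqref{eq:Ee^atau(y)<infty}, I would choose $t_0\ge 0$ with $\E e^{-t_0\xi}\le e^{-a}$, exponentially tilt the $(\xi_k)$, bound the resulting tilted first-passage time (the tilted random walk having nonnegative drift), and untilt.

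The main technical obstacle is the handling of $\eta$'s potentially heavy left tail: in part (a) it arises in making the iteration of the functional equation rigorous when $\xi$ has no positive lower bound, and is addressed by observing that $u(y)$ behaves geometrically for very negative $y$; in part (b) it appears in the implication \eqref{eq:-loginf E^-txi>=a}$\Rightarrow$\eqref{eq:Ee^atau(y)<infty}, where one truncates $\eta$ from below (replacing $\eta_n$ by $\eta_n\vee(-M)$) and controls the number of ``wasted'' attempts with $\eta_n<-M$ by a geometric bound.
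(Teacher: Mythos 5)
Your decomposition of the problem matches the paper's, and the necessity direction in part~(a) (taking $\xi_1=\cdots=\xi_n=0$) is exactly right, as is the observation that $\Prob\{\xi<0,\eta\le x\}=0$ forces $\xi_k\ge 0$ and $\eta_k\le x$ for $k\le n$ on $\{\tau(x)>n\}$. But there are two issues, one of them a genuine gap.

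\textbf{Part (a), sufficiency.} The functional-equation iteration for $u(y)=\E e^{a\tau(y)}$ is a reasonable alternative route, but the phrase ``each iteration lowers the effective level by a strictly positive amount'' is not correct when $\xi>0$ takes arbitrarily small values: there is then no uniform decrement per step, and the iteration is not obviously finite. The mechanism that actually works is that the mass at zero, $\Prob\{\xi=0,\eta\le x\}$, is small enough that $e^{a}\Prob\{\xi=0,\eta\le x\}<1$; the paper makes this precise by replacing $\xi_n$ with the nonnegative increment $\hat\xi_n:=\xi_n\1_{\{\eta_n\le x\}}+\1_{\{\eta_n>x\}}$, noting $\hat\tau^*$ (the ordinary first-passage time for $\hat S$) has finite $a$th exponential moment precisely when $e^a\Prob\{\hat\xi=0\}<1$ (Prop.\ 1.1 of \cite{IksMei:10b}), and then correcting for the $\eta$-perturbation by a geometric stopping time. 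Your sketch would have to recover this input; as stated, it does not.

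\textbf{Part (b), the implication \eqref{eq:Ee^atau<infty}$\Rightarrow$\eqref{eq:-loginf E^-txi>=a}.} This is where the real gap lies. The lower bound you use,
\begin{equation*}
\Prob\{\tau(x)>n\}~\ge~\Prob\{S_1\le 0,\ldots,S_{n-1}\le 0,\ \eta_1\le x,\ldots,\eta_n\le x\},
\end{equation*}
factors (by conditioning on $\{\eta_k\le x\}$ for $k\le n$) into $\Prob\{\eta\le x\}^{\,n}\,\Prob\{\widetilde\tau^*>n\}$, where $\widetilde\tau^*$ is the ladder epoch of the random walk with increment law $\Prob\{\xi\in\cdot\mid\eta\le x\}$. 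Its Cram\'er rate is $\widetilde R_x=-\log\inf_{t\ge 0}\E(e^{-t\xi}\mid\eta\le x)$, and the net decay rate of the lower bound is
\begin{equation*}
\kappa_x~:=~\Prob\{\eta\le x\}\,e^{-\widetilde R_x}~=~\inf_{t\ge 0}\E\,e^{-t\xi}\1_{\{\eta\le x\}}.
\end{equation*}
Hence $\E e^{a\tau(x)}<\infty$ only yields $a\le-\log\kappa_x$. But $\kappa_x\le\inf_{t\ge 0}\E e^{-t\xi}=e^{-R}$, so $-\log\kappa_x\ge R$: the constraint you obtain is \emph{weaker} than $a\le R$ and does not imply it. To get $a\le R$ one must let $x\to\infty$, which requires already knowing $\E e^{a\tau(y)}<\infty$ for all $y$ — precisely what your cycle is trying to derive downstream via \eqref{eq:-loginf E^-txi>=a}$\Rightarrow$\eqref{eq:Ee^atau(y)<infty}, so the cycle cannot close. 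The paper's fix is a separate solidarity step (Lemma~\ref{Lemma:solidarity mgf}): since $\Prob\{\xi\le-\varepsilon,\eta\le x\}>0$, one has $\E e^{a\tau(x)}\ge\Prob\{\xi\le-\varepsilon,\eta\le x\}\,\E e^{a\tau(x+\varepsilon)}$, so finiteness propagates upward in $x$; only then does one send $y\to\infty$ in $a\le-\log\inf_t\E e^{-t\xi}\1_{\{\eta\le y\}}$, and even this limit needs a nontrivial argument (the paper splits $\varphi_y$ into a nondecreasing and a nonincreasing part and controls the minimizer $t_y$). Your proposal omits the solidarity step entirely, and ``the $\eta$-constraint contributes at worst a geometric factor, forcing $a\le R$'' is, by the computation above, incorrect for a fixed $x$.

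The remaining arrows you sketch (trivial monotonicity, the classical equivalence \eqref{eq:E^atau*<infty}$\Leftrightarrow$\eqref{eq:-loginf E^-txi>=a}, and the tilting/truncation idea for sufficiency) are in line with the paper's arguments or plausible variants thereof, though the sufficiency direction would also need to be spelled out — the paper uses a ladder-epoch construction $\sigma=\inf\{n:\eta_{\tau_n^*}>\xi_{\tau_n^*}+s\}$ with $\tau(s)\le\tau_\sigma^*$, which is cleaner than a crude truncation of $\eta$.
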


Turning to exponential moments of $N(x)$, the number of visits of
$(T_{n})_{n \geq 1}$ to $(-\infty,x]$, for $x \in \R$, let us point
out before-hand that these random variables are a.s.\ finite iff
$(T_{n})_{n\ge 1}$ is positively divergent which in turn holds
true iff $(S_{n})_{n \geq 0}$ is positively divergent and
\begin{equation}	\label{eq:EJ+eta-<infty}
\E J_{+}(\eta^{-})<\infty
\end{equation}
(see Theorem\;\ref{Thm:global}) which will therefore be assumed hereafter.

\begin{Theorem} \label{Thm:exponential_N}
Let $(T_{n})_{n \geq 1}$ be a positively divergent PRW.

\vspace{.2cm}\noindent
(a) If $\xi \geq 0$ a.s., then the assertions
\begin{align}
&\E \exp(a N(x))<\infty,	\label{eq:Ee^aN<infty}\\
&e^a\,\Prob\{\xi=0,\eta\leq x\}+\Prob\{\xi=0,\eta>x\}<1	\label{eq:e^aP(xi=0,eta<=x)+P(xi=0,eta>x)<1}
\end{align}
are equivalent for each $a>0$ and $x\in\R$. As a consequence,
\begin{equation}\label{eq:mgf N(x)}
\left\{a>0:\E e^{aN(x)}<\infty\right\}=(0,a(x))
\end{equation}
for any $x\in\R$, where $a(x)\in (0,\infty]$ equals the supremum of all positive $a$ satisfying \eqref{eq:e^aP(xi=0,eta<=x)+P(xi=0,eta>x)<1}. As a function of $x$, $a(x)$ is nonincreasing with lower bound $-\log\Prob\{\xi=0\}$.

\vspace{.2cm}\noindent
(b) If $\xi>0$ a.s., then $a(x)=\infty$ for all $x\in\R$, thus $\E e^{aN(x)}<\infty$ for any $a>0$ and $x\in\R$.

\vspace{.2cm}\noindent (c) If $\Prob\{\xi < 0\} > 0$, then the
following assertions are equivalent:
\begin{align}
&   \E \exp(a N(x)) < \infty   \quad   \text{for some/all } x \in \R,   \label{eq:Ee^aN(x)<infty}   \\
&   \E \exp(a N^*(x)) < \infty     \quad   \text{for some/all } x \in \R,   \label{eq:Ee^aN*(x)<infty}  \\
&   R=-\log \inf_{t \geq 0}\,\E e^{-t\xi} \geq a   \label{eq:R_geq_a}.
\end{align}
\end{Theorem}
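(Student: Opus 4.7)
My plan is to prove the triple equivalence by (A) establishing (ii)$\iff$(iii) as a classical random-walk fact, (B) deducing (i) from (iii) via a shot-noise comparison, and (C) deducing (iii) from (i) via the elementary bound $\tau(x)\leq N(x)+1$ combined with Theorem \ref{Thm:exponential_tau}. The equivalence (ii)$\iff$(iii)---that $\E e^{aN^*(x)}<\infty$ for some/all $x\in\R$ iff $R\geq a$ for ordinary random walks drifting to $+\infty$ with $\Prob\{\xi<0\}>0$---is a Kesten-type result. It can be proved via strict descending ladder heights: the probability of return to $(-\infty,x]$ from above decays geometrically with parameter $\inf_{t\geq 0}\E e^{-t\xi}=e^{-R}$, controlled by the Cram\'er bound $\Prob\{S_n\leq x\}\leq e^{-nR}$. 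I would either cite this or sketch it briefly.

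For (iii)$\Rightarrow$(i), I fix $K>0$ and use the inclusion $\{T_n\leq x,\eta_n\geq -K\}\subseteq\{S_{n-1}\leq x+K\}$ to get
\begin{equation*}
N(x)\;\leq\;N^*(x+K)+1+\sum_{n\geq 1}\1_{\{\eta_n^->K\}}\1_{\{S_{n-1}\leq x+\eta_n^-\}}.
\end{equation*}
The first term has exponential moment of order $a$ by (ii). The second is a shot-noise functional of $(S_{n-1})$ driven by i.i.d.\ marks $\eta_n^-$, and its exponential moment of order $a$ is controlled by the shot-noise results developed later in the paper, using $R\geq a$ and the positive-divergence condition $\E J_+(\eta^-)<\infty$. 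To combine both contributions without losing the critical exponent I would avoid H\"older (which would halve the exponent) and instead condition on the mark sequence $(\eta_n^-)_{n\geq 1}$, exploiting that conditionally $(S_{n-1})$ is still the partial-sum sequence of an i.i.d.\ (Cram\'er-shifted) walk.

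For (i)$\Rightarrow$(iii), the bound $\tau(x_0)\leq N(x_0)+1$ shows $\E e^{a\tau(x_0)}<\infty$ whenever $\E e^{aN(x_0)}<\infty$. If $\Prob\{\xi<0,\eta\leq x_0\}>0$, Theorem \ref{Thm:exponential_tau}(b) directly delivers $R\geq a$. In the edge case $\Prob\{\xi<0,\eta\leq x_0\}=0$, I would boost from $x_0$ to a larger $x_1$ with $\Prob\{\xi<0,\eta\leq x_1\}>0$ (which exists because $\Prob\{\xi<0\}>0$) by controlling the additional visits $N(x_1)-N(x_0)$ in the bounded window $(x_0,x_1]$ through a shot-noise argument parallel to that of step (B), and then apply Theorem \ref{Thm:exponential_tau}(b) at $x_1$. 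The main obstacle is the shot-noise estimate in (iii)$\Rightarrow$(i): the coupling between $\xi_n$ and $\eta_n$ precludes any naive independence splitting, so retaining the critical exponent $a$ requires the full exponential-moment criterion for shot-noise processes that the paper develops in a later section.
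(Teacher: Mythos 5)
Your steps (ii)$\Leftrightarrow$(iii) and (i)$\Rightarrow$(iii) are sound in spirit and close to the paper's; for the latter the paper sidesteps your ``boost from $x_0$ to $x_1$'' by a much shorter solidarity argument (since $\Prob\{\xi<0\}>0$, there is $n$ with $\Prob\{S_n\leq x-y\}>0$, whence $\E e^{aN(x)}\geq\Prob\{S_n\leq x-y\}\,\E e^{aN(y)}$; so ``some'' already gives ``all'', and one may then simply pick $x$ large enough that $\Prob\{\xi<0,\eta\leq x\}>0$). But the core step (iii)$\Rightarrow$(i) has a genuine gap. Your decomposition
\begin{equation*}
N(x)\;\leq\;N^*(x+K)+1+\sum_{n\geq 1}\1_{\{\eta_n^->K\}}\1_{\{S_{n-1}\leq x+\eta_n^-\}}
\end{equation*}
is correct as an inequality, but recombining the two terms without losing the critical exponent is exactly the difficulty, and your proposed remedy does not close it. Conditioning on $(\eta_n^-)_{n\geq 1}$ and asserting that ``conditionally $(S_{n-1})$ is still the partial-sum sequence of an i.i.d.\ (Cram\'er-shifted) walk'' is false in the paper's setup: no independence of $\xi$ and $\eta$ is assumed, so conditioning on the marks $(\eta_n)$ distorts the conditional law of the increments $\xi_n$, and $(S_{n-1})$ is no longer a random walk under the conditional measure. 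Even under the extra assumption $\xi\perp\eta$ the conditional expectation $\E\bigl[\exp(a\sum_n\1_{\{S_{n-1}\leq x-\eta_n\}})\mid(\eta_n)\bigr]$ is the exponential moment of the number of visits of $(S_{n-1})$ to a time-varying sequence of half-lines, for which no bound retaining the sharp exponent is supplied. You acknowledge this (``the coupling \ldots\ precludes any naive independence splitting'') and fall back on saying the paper's shot-noise criterion is ``required''---but that is precisely the content that must be proved, and your plan defers rather than establishes it.

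The paper's route for (iii)$\Rightarrow$(i) is structurally different and does close the gap. It never splits $N(x)$; rather it identifies $N(x)$ directly as the shot-noise process $Z(x)$ with response $X(t)=\1_{\{\eta\leq t\}}$, invokes Theorem~\ref{Thm:shot-noise_general}, and reduces everything to checking $r^{>}(0)<\infty$. Verifying that condition uses the exponential change of measure $\Prob_\gamma$ determined by $\E e^{-\gamma\xi}=e^{-a}$, the duality identity \eqref{eq:duality_lemma} for the pre-$\tau^*$ occupation measure, subadditivity of $\mathbb{U}^{>}$, the asymptotics $\mathbb{U}^{>}(y)\asymp J_+(y)$, and the standing positive-divergence condition $\E J_+(\eta^-)<\infty$. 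None of this is present in your plan. In summary, (i)$\Rightarrow$(ii)$\Rightarrow$(iii) is essentially fine, but (iii)$\Rightarrow$(i) is missing the substantive argument, and the proposed conditioning shortcut is not available in the generality of the theorem.
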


\begin{Theorem} \label{Thm:exponential_rho}
Let $(T_{n})_{n \geq 1}$ be a positively divergent PRW, $a>0$ and $R = -\log \inf_{t \geq 0} \E e^{-t\xi}$.
\begin{itemize}
    \item[(a)]
    Assume that $\Prob\{\xi \geq 0\} = 1$. Let $x\in\R$ and assume that $\Prob\{\eta \leq x\} > 0$.
    Then the following assertions are equivalent:
    \begin{equation}    \label{eq:Ee^arho<infty}
    \E \exp(a\rho(x)) < \infty;
    \end{equation}
    \begin{equation}    \label{eq:V_a(y)<infty}
    V_a(y)    ~:=~ \sum_{n \geq 1} e^{an} \Prob\{T_n \leq y\} < \infty \ \text{ for some/all } y \geq x;
    \end{equation}
    \begin{equation}    \label{eq:a<-logP(xi=0)_Ee^-gammaeta<infty}
    a < -\log \Prob\{\xi=0\} \text{ and } \E e^{-\gamma \eta} < \infty,
    \end{equation}
    where $\gamma$ is the unique positive number satisfying $\E e^{-\gamma \xi}=e^{-a}$.
    \item[(b)]
    If $\Prob\{\xi < 0\} > 0$, then the following assertions are equivalent:
    \begin{equation}    \label{eq:Ee^arho(x)<infty_fax}
    \E \exp(a\rho(x)) < \infty \quad   \text{for some/all } x\in\R;
    \end{equation}
    \begin{equation}    \label{eq:V_a(x)<infty_fax}
    V_a(x) ~=~ \sum_{n \geq 1} e^{an} \Prob\{T_n \leq x\} < \infty \ \text{ for some/all } x \in\R;
    \end{equation}
    \begin{equation}    \label{eq:a<=R_extra}
    a < R \text{ and } \E e^{-\gamma\eta}<\infty
    \;\text{ or }\;
    a=R,\ \E \xi e^{-\gamma \xi} < 0 \text{ and }  \E e^{-\gamma\eta}<\infty
    \end{equation}
    where $\gamma$ is the minimal positive number satisfying $\E e^{-\gamma \xi}=e^{-a}$.
\end{itemize}
\end{Theorem}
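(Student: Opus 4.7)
The plan is to reduce both equivalences to the ordinary random walk $(S_m)$ via the decomposition $T_n = S_{n-1}+\eta_n$, building on the shot-noise moment criteria that the abstract announces will be developed earlier in the paper.

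First, I will prove $\E e^{a\rho(x)}<\infty \Leftrightarrow V_a(x)<\infty$. The key observation is the sandwich
$$e^{a\rho(x)}\ \leq\ \sum_{n \geq 1} e^{an}\1_{\{T_n \leq x\}}\ \leq\ \sum_{n=1}^{\rho(x)} e^{an}\ \leq\ \frac{e^a}{e^a-1}\,e^{a\rho(x)},$$
valid on $\{\rho(x)<\infty\}$, since every $n$ with $T_n \leq x$ satisfies $n \leq \rho(x)$ and $\rho(x)$ is itself such an index. Taking expectations and applying Fubini will yield $\E e^{a\rho(x)} \leq V_a(x) \leq \frac{e^a}{e^a-1}\E e^{a\rho(x)}$, giving \eqref{eq:Ee^arho<infty}$\Leftrightarrow$\eqref{eq:V_a(y)<infty} and \eqref{eq:Ee^arho(x)<infty_fax}$\Leftrightarrow$\eqref{eq:V_a(x)<infty_fax}. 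The ``some/all'' alternatives follow from monotonicity of $x \mapsto V_a(x)$ and, in (b), from the freedom to shift ensured by $\Prob\{\xi<0\}>0$.

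Next, I factor $V_a$ using the independence of $S_{n-1}$ and $\eta_n$:
$$V_a(x)\ =\ \sum_{n \geq 1} e^{an}\int \Prob\{S_{n-1}\leq x-y\}\,\Prob\{\eta\in dy\}\ =\ e^a\,\E V_a^*(x-\eta),$$
where $V_a^*(z):=\sum_{m \geq 0} e^{am}\Prob\{S_m\leq z\}$ is the analogue for the ordinary random walk. This reduces the problem to an analysis of $V_a^*$, for which I will invoke the ordinary-RW shot-noise criteria (the case $\eta\equiv 0$). Three facts are needed on the positively divergent $(S_m)$: (i) $V_a^*(z)$ is finite for some/all $z$ iff the $\xi$-part of \eqref{eq:a<-logP(xi=0)_Ee^-gammaeta<infty} resp.\ \eqref{eq:a<=R_extra} holds; (ii) $V_a^*(z)\sim C\,e^{\gamma z}$ as $z\to\infty$ (Kesten-type asymptotic), with $\gamma$ the minimal positive root of $\phi(\gamma):=\E e^{-\gamma\xi}=e^{-a}$; and (iii) $V_a^*(z)\to 0$ fast enough as $z\to-\infty$. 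Inserted into $V_a(x)=e^a\E V_a^*(x-\eta)$, the decisive contribution to integrability comes from $\eta\to-\infty$, where the integrand grows like $e^{-\gamma\eta}$; hence finiteness holds iff $\E e^{-\gamma\eta}<\infty$. In part (a), $\xi\geq 0$ makes $\phi$ monotone decreasing from $1$ to $\Prob\{\xi=0\}$, so $R=-\log\Prob\{\xi=0\}$ and $\gamma$ is uniquely determined, recovering \eqref{eq:a<-logP(xi=0)_Ee^-gammaeta<infty}; in part (b), the two-sided behaviour of $\phi$ yields the boundary clause in \eqref{eq:a<=R_extra}.

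The main obstacle will be the sharp Kesten-type asymptotic $V_a^*(z)\sim Ce^{\gamma z}$, especially at the boundary $a=R$, where the Bahadur--Rao $m^{-1/2}$ correction in $\Prob\{S_m\leq z\}$ forces the extra derivative-type condition on $\E \xi e^{-\gamma\xi}$ in order to preserve summability of $V_a^*$. Establishing necessity of $\E e^{-\gamma\eta}<\infty$ from $V_a(x)<\infty$ will require a matching lower bound $V_a^*(z)\geq c\,e^{\gamma z}$ for large $z$, which is more delicate than the upper bound $\Prob\{S_m\leq z\}\leq e^{\gamma z}\phi(\gamma)^m$ furnished by a direct exponential Chebyshev argument, and it is here that the earlier shot-noise results should carry most of the technical weight.
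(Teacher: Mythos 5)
Your proposal follows essentially the same route as the paper: reduce $\E e^{a\rho(x)}<\infty$ to $V_a(x)<\infty$ via a two-sided inequality, factor $V_a(x)=e^a\E V_a^*(x-\eta)$ using independence of $S_{n-1}$ and $\eta_n$, and then appeal to exponential two-sided bounds on the random-walk renewal function $V_a^*$. The sandwich you wrote is a clean packaging of the same information the paper extracts from $\Prob\{\rho(x)=n\}\le\Prob\{T_n\le x\}\le\Prob\{\rho(x)\ge n\}$; your leftmost inequality $e^{a\rho(x)}\le\sum_{n\ge1}e^{an}\1_{\{T_n\le x\}}$ fails on $\{\rho(x)=0\}$ (where the left side is $1$ and the sum is $0$), but this contributes only an additive constant and does not affect the finiteness equivalence, so it is a cosmetic slip. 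One substantive misattribution: you expect the facts about $V_a^*$ — the dichotomy in terms of $a$ versus $R$ and the asymptotic $V_a^*(z)\asymp e^{\gamma z}$ — to come from the shot-noise theory of Section \ref{sec:shot-noise}. In fact the paper imports exactly those facts as Proposition \ref{Prop:IksMei:10}, a restatement of Theorems 2.1 and 2.2 in \cite{IksMei:10a}; the shot-noise machinery in this paper is used for Theorems \ref{Thm:exponential_N} and \ref{Thm:power_N}, not for $\rho$. Modulo that sourcing point, your plan matches the paper's argument.
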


\begin{Rem}	\label{Rem:exponential_rho}
Notice that in Theorem\;\ref{Thm:exponential_rho} the case $\Prob\{\xi \geq 0\}=1$, $\Prob\{\eta \leq x\} = 0$ is not treated.
But this case is trivial since then $\rho(x) = 0$ a.s., \textit{cf.}\ Lemma\;\ref{Lem:rho_trivial}.
\end{Rem}

\subsection{Finiteness of Power Moments}	\label{subsec:finiteness_power}

\begin{Theorem}	\label{Thm:power_N}
Let $(T_n)_{n \geq 0}$ be a positively divergent PRW and $p>0$. The following conditions are equivalent:
\begin{equation}    \label{eq:EN^p<infty}
\E N(x)^p < \infty	\text{ for some/all }  x \in \R;
\end{equation}
\begin{equation}    \label{eq:EN*^p<infty}
\E N^*(x)^p < \infty	\text{ for some/all } x \geq 0;
\end{equation}
\begin{equation}    \label{eq:criterion_EN*^p<infty}
\E J_+(\xi^-)^{p+1} < \infty.
\end{equation}
\end{Theorem}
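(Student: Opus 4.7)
The plan is to split the three-way equivalence into two parts: the classical link $(\ref{eq:EN*^p<infty}) \Leftrightarrow (\ref{eq:criterion_EN*^p<infty})$ for the ordinary random walk, and the PRW-to-RW comparison $(\ref{eq:EN^p<infty}) \Leftrightarrow (\ref{eq:EN*^p<infty})$. For the former, I would invoke the classical criterion (essentially due to Janson 1986, see also Kesten and Maller 1996) stating that, for a random walk drifting to $+\infty$, one has $\E N^*(x)^p < \infty \Leftrightarrow \E J_+(\xi^-)^{p+1} < \infty$. The ``some $x$ $\Rightarrow$ all $x$'' direction in both $(\ref{eq:EN^p<infty})$ and $(\ref{eq:EN*^p<infty})$ is dispatched first: by monotonicity of $N$ and $N^*$ in $x$, one reduces to bounding $\E[(N(x') - N(x))^p]$ for $x < x'$, which equals the $p$-th moment of the shot-noise sum $\sum_n \1_{\{x - S_{n-1} < \eta_n \leq x' - S_{n-1}\}}$; this is finite by the power-moment shot-noise theorems of the paper under $\E J_+(\eta^-) < \infty$ (ensured by positive divergence, cf.\ Theorem~\ref{Thm:global}).

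For $(\ref{eq:EN*^p<infty}) \Rightarrow (\ref{eq:EN^p<infty})$, I would partition $\{T_n \leq x\}$ according to whether $S_{n-1} \leq x$ or $S_{n-1} > x$ to obtain
\begin{equation*}
N(x) \;\leq\; N^*(x) \;+\; Y_x, \qquad Y_x := \sum_{n \geq 1} \1_{\{S_{n-1} > x,\, \eta_n^- > S_{n-1} - x\}}.
\end{equation*}
Minkowski then yields $\|N(x)\|_p \leq \|N^*(x)\|_p + \|Y_x\|_p$; the first summand is finite by hypothesis, and the shot-noise sum $Y_x$ has finite $p$-th moment by the shot-noise moment theorems of the paper, using $\E J_+(\eta^-) < \infty$ together with $(\ref{eq:criterion_EN*^p<infty})$ to control the rate at which $S_n \to +\infty$. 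Conversely, for $(\ref{eq:EN^p<infty}) \Rightarrow (\ref{eq:EN*^p<infty})$, I would pick $c \in \R$ with $q := \Prob\{\eta \leq c\} > 0$ (which exists by the standing assumption) and use the lower bound
\begin{equation*}
N(x) \;\geq\; \sum_{n \geq 1} \1_{\{S_{n-1} \leq x - c\}}\, \1_{\{\eta_n \leq c\}};
\end{equation*}
an iterated conditioning exploiting that $\eta_n$ is independent of $(\xi_1, \ldots, \xi_{n-1})$ should produce a lower bound $\E N(x)^p \geq c_p\, \E N^*(x-c)^p$ for some $c_p > 0$, whence $(\ref{eq:EN*^p<infty})$ follows via the some/all equivalence already established for $N^*$.

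The principal obstacle is the shot-noise moment bound for $Y_x$: it requires coupling the left-tail decay of $\eta$ with the renewal-theoretic speed of $(S_n)$, which is precisely what the power-moment variant of the shot-noise machinery developed earlier in the paper is designed to provide. A secondary subtle point is the lower-bound transfer in $(\ref{eq:EN^p<infty}) \Rightarrow (\ref{eq:EN*^p<infty})$: since $(\xi,\eta)$ may be arbitrarily dependent, one cannot simply thin $(S_n)$ along the indices with $\eta_n \leq c$ and treat the result as an independent random walk, so a careful conditional-expectation argument respecting the joint law of $(\xi_n, \eta_n)$ is needed.
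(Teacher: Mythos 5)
Your high-level plan (split into \eqref{eq:EN*^p<infty}$\Leftrightarrow$\eqref{eq:criterion_EN*^p<infty} via Kesten--Maller, then a PRW-to-RW comparison) matches the paper's strategy, and your converse-direction lower bound is in the right spirit. But there is a genuine gap in the implication \eqref{eq:EN*^p<infty}$\Rightarrow$\eqref{eq:EN^p<infty} for $p\geq 1$, precisely at the point you flag as the ``principal obstacle.''

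Your correction term
\begin{equation*}
Y_x ~=~ \sum_{n\geq 1}\1_{\{S_{n-1}>x,\ \eta_n^->S_{n-1}-x\}}
\end{equation*}
is \emph{not} a shot-noise process in the sense of Section~\ref{sec:shot-noise}. Writing $Y_x=\sum_n X_n(x-S_{n-1})$ forces the response $u\mapsto X(u)=\1_{\{u<0\}}\1_{\{\eta\leq u\}}$, which is nondecreasing on $(-\infty,0)$ but drops back to $0$ at $u=0$, so it fails the standing requirement that $X(\cdot)$ be nondecreasing (and Theorem~\ref{Thm:power_shot-noise_positive} additionally requires $\xi\geq 0$ a.s., which you have not arranged). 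Thus the appeal to ``the power-moment variant of the shot-noise machinery'' does not close the estimate: for $p\geq 1$ the Minkowski bound reduces the problem to $\E Y_x^p<\infty$, and that is exactly as hard as the original problem and not covered by the theorems you cite. What the paper actually does for $p\geq 1$ is represent $N(x)$ itself (not a residual) as a shot-noise over the \emph{strictly ascending ladder walk}, with response $X_n(x)=\sum_{k=\tau_{n-1}^*+1}^{\tau_n^*}\1_{\{T_k\leq x\}}$ and increments $S_{\tau_n^*}-S_{\tau_{n-1}^*}>0$; the nontrivial work is then verifying the $s_q$-integral conditions, which uses $\E(\tau^*)^{p+1}<\infty$ from Kesten--Maller. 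For $p\in(0,1)$ your decomposition is fine, but there subadditivity of $t\mapsto t^p$ reduces $\E Y_x^p$ to $\E Y_x<\infty$; no shot-noise theorem is needed. The same issue afflicts your ``some/all'' dispatch via $N(x')-N(x)$: the interval indicator $\1_{\{x-S_{n-1}<\eta_n\leq x'-S_{n-1}\}}$ is likewise not a nondecreasing response.

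Two secondary points. First, the case $\xi\geq 0$ a.s.\ should be split off at the start (as the paper does): then $N(x)$ has finite exponential moments by Theorem~\ref{Thm:exponential_N}(a),(b), and the whole question is trivial, whereas your shot-noise comparison for the remainder term would otherwise have to cope with possibly negative increments. Second, in the converse direction the ``iterated conditioning'' needs to be made precise: because $\xi$ and $\eta$ may be dependent within a pair, the paper invokes Hitczenko's tangent-sequence comparison (for $p<1$) and the Burkholder--Davis--Gundy convex function inequality (for $p\geq 1$) to pass from $\E N(x)^p$ to $\E N^*(x-y)^p$; a bare conditional-expectation argument does not obviously deliver the required lower bound $\E N(x)^p\geq c_p\,\E N^*(x-c)^p$.
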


\begin{Theorem} \label{Thm:power_rho}
Let $(T_n)_{n \geq 0}$ be a positively divergent PRW and $p>0$. Then the following assertions are equivalent:
\begin{equation}    \label{eq:Erho^p<infty}
\E \rho(x)^p<\infty    \text{ for some/all }	x \in \R;
\end{equation}
\begin{equation}    \label{eq:Erho*^p<infty}
\E \rho^*(y)^p<\infty \text{ for some/all } y \geq 0 \quad \text{and} \quad \E J_+(\eta^-)^{p+1} < \infty;
\end{equation}
\begin{equation}    \label{eq:criterion_Erho^p<infty}
\E J_+(\xi^-)^{p+1} < \infty \quad \text{and} \quad \E J_+(\eta^-)^{p+1} < \infty.
\end{equation}
\end{Theorem}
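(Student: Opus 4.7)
My plan is to separate the contributions of the random-walk part $(S_n)$ and the perturbation $(\eta_n)$ to the last exit time $\rho(x)$, and to reduce to establishing the equivalence $(i)\Leftrightarrow(iii)$. Applying Theorem \ref{Thm:power_N} with $\eta\equiv 0$ (after the standard time shift) delivers $\E N^*(y)^p<\infty\Leftrightarrow\E J_+(\xi^-)^{p+1}<\infty$, and the classical fact that $\E\rho^*(y)^p$ and $\E N^*(y)^p$ are simultaneously finite for random walks drifting to $+\infty$ then completes the equivalence $(ii)\Leftrightarrow(iii)$.

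For $(i)\Rightarrow(iii)$, the inclusion $N(x)\leq\rho(x)$ combined with Theorem \ref{Thm:power_N} immediately yields $\E J_+(\xi^-)^{p+1}<\infty$. For the remaining condition I introduce an auxiliary \emph{independent} family $D_k:=\{\eta_k\leq-c_k\}$, with $(c_k)$ a deterministic sequence chosen so that, almost surely, $S_{k-1}\leq c_k-|x|-1$ for all sufficiently large $k$ (when $\E\xi\in(0,\infty)$ one may take $c_k\asymp 2\E\xi\cdot k$; in the general positively divergent case $(c_k)$ must be read off $J_+$). For such $k$ one has $D_k\subseteq\{T_k\leq x\}$, so if $M:=\sup\{k:D_k\}$ and $K_0$ is the a.s.\ finite random index beyond which the domination of $S_{k-1}$ by $c_k-|x|-1$ takes effect, then $M\leq K_0+\rho(x)$. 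A direct Abel-summation computation, exploiting the independence of the $D_k$, gives $\E M^p\asymp\sum_{k\geq 1}k^p\,\Prob\{\eta^-\geq c_k\}\asymp\E J_+(\eta^-)^{p+1}$, so that $\E\rho(x)^p<\infty$ (together with $\E K_0^p<\infty$) forces $\E J_+(\eta^-)^{p+1}<\infty$.

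For $(iii)\Rightarrow(i)$ I will establish the matching upper bound $\rho(x)\leq L'(x)+\rho^*(x+M')+1$, where $L'(x):=\sup\{k:\eta_k\leq-c_k\}$ and $M'$ is a deterministic constant. Indeed, for $n>L'(x)$ one has $\eta_n>-c_n$, so $T_n\leq x$ forces $S_{n-1}\leq x+c_n$; arguing again from the growth of $(S_n)$, this can occur for only finitely many $n$, bounded above by $\rho^*(x+M')$. The moments of $L'(x)$ are handled by the same Abel computation as in Step~2 (now providing an upper bound), while $\E\rho^*(x+M')^p<\infty$ follows from the first part of (iii) via $(ii)\Leftrightarrow(iii)$. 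Combining these yields $\E\rho(x)^p<\infty$ under (iii).

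The \textbf{main obstacle} lies in the case when $\E\xi$ is infinite, zero, or undefined while $(T_n)$ remains positively divergent: the SLLN heuristic $S_{n-1}\asymp n\E\xi$ breaks down, so the correct sequence $(c_k)$ must be constructed from $J_+$, and identifying the resulting Abel sum with $\E J_+(\eta^-)^{p+1}$ becomes a nontrivial computation. Securing $\E K_0^p<\infty$ in this generality, and matching the constants in both bounds to obtain precisely the functional $J_+$, is where the shot-noise moment estimates developed elsewhere in the paper enter critically.
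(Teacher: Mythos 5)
Your high-level strategy---separate the contribution of $(S_n)$ from that of $(\eta_n)$, and establish the two conditions in \eqref{eq:criterion_Erho^p<infty} separately---is similar in spirit to the paper's. The observation that $N(x)\leq\rho(x)$ together with Theorem~\ref{Thm:power_N} gives $\E J_+(\xi^-)^{p+1}<\infty$ directly is a valid shortcut (the paper instead cites Kesten--Maller for \eqref{eq:Erho*^p<infty}$\Leftrightarrow$\eqref{eq:criterion_Erho^p<infty} and handles the $\eta$-part separately). However, the central mechanism you propose---a \emph{deterministic} dominating sequence $(c_k)$ with $S_{k-1}\leq c_k-|x|-1$ eventually a.s.\ and $K_0$ of finite $p$-th moment---is not viable in the generality required.

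Here is why it fails. You need $(c_k)$ to grow at the rate $J_+^{-1}(k)$ so that the Abel sum $\sum_k k^p\,\Prob\{\eta^-\geq c_k\}$ captures $\E J_+(\eta^-)^{p+1}$. But $J_+^{-1}(k)$ is precisely the \emph{typical} size of $S_k$, and when $\E\xi^+=\infty$ the fluctuations of $S_k$ around this scale are of the \emph{same} order. Concretely, if $\xi\geq 0$ with $\Prob\{\xi>y\}\asymp y^{-\alpha}$ for some $\alpha\in(0,1)$, then $J_+^{-1}(k)\asymp k^{1/\alpha}$; the events $\{\xi_k>c_k\}$ are independent with $\Prob\{\xi_k>c_k\}\asymp 1/k$, so by the second Borel--Cantelli lemma $\xi_k>c_k$, hence $S_k>c_k$, occurs infinitely often a.s., and your $K_0=\infty$ a.s. No deterministic choice of $(c_k)$ can simultaneously satisfy properties (a) ($\E K_0^p<\infty$) and (b) (the Abel sum identifies with $\E J_+(\eta^-)^{p+1}$). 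The paper circumvents this entirely: it uses the summation-by-parts identity $\E\rho(x)^p<\infty\iff\E\,\mathbb{U}_{p-1}(x-\inf_{k}T_k)<\infty$, the Kesten--Maller asymptotics $\mathbb{U}_{p-1}(y)\asymp J_+(y)^p$, and then a perpetuity-type estimate (Lemma~3.4 of \cite{AlsIks:09}) that promotes $\E J_+\big((\inf_k\widehat T_k)^-\big)^p<\infty$ to $\E J_+(\eta^-)^{p+1}<\infty$, where the extra $+1$ in the exponent comes from a random (not deterministic) renewal-theoretic argument.

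The converse direction also breaks. You claim $\rho(x)\leq L'(x)+\rho^*(x+M')+1$ for a deterministic constant $M'$. For $n>L'(x)$ with $T_n\leq x$, one gets $S_{n-1}\leq x+c_n$, but since $c_n\to\infty$ the set $\{n:S_{n-1}\leq x+c_n\}$ is strictly larger than $\{n:S_{n-1}\leq x+M'\}$ for any fixed $M'$, so the asserted inequality is false. The paper instead decomposes $T_k$ over blocks between successive strictly ascending ladder epochs $\tau^*_n$, producing a new PRW $(\widehat T_k)$ with strictly positive step and computable perturbation $\widehat\eta_k=\min_{\tau^*_{k-1}<j\leq\tau^*_k}T_j-\widehat S_{k-1}$, and then bounds $\E J_+(\widehat\eta^-)^{p+1}$ using the moment assumption and Gut's maximal inequality. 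You would need a random, block-structured argument of this kind; the deterministic-comparison idea cannot be repaired.
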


\begin{Rem}
According to Theorem\;\ref{Thm:exponential_rho}, for fixed $a>0$,
\begin{eqnarray*}
\E e^{a\rho(x)} & < & \infty		\quad	\text{for some/all } x \in \R	\quad   \text{iff}	\\
\sum_{n \geq 1} e^{an} \Prob\{T_n\leq x\} & < & \infty \quad	\text{for some/all } x \in \R.
\end{eqnarray*}
According to
\cite[Theorem\;2.1]{Kesten+Maller:96}, for fixed $p>0$,
\begin{eqnarray*}
\E \rho^*(x)^p & < & \infty \quad \text{for some/all } x \geq 0 \quad \text{iff}	\\
\sum_{n \geq 1} n^{p-1} \Prob\{S_n\leq x\} & < & \infty \quad \text{for some/all } x \geq 0.
\end{eqnarray*}
In the light of these results it may appear to be unexpected that,
in general, the finiteness of $\E \rho(x)^p$ is not equivalent to
the convergence of the series $\sum_{n \geq 1}n^{p-1} \Prob\{T_n \leq x\}$.
Indeed, it can be checked (but we omit the details) that a
criterion for the convergence of the latter series is as follows:
\begin{equation*}
\E \rho^*(x)^p ~<~ \infty \quad \text{for some/all } y \geq 0  \quad \text{and} \quad \E J_+(\eta^-)^p<\infty.
\end{equation*}
\end{Rem}

\subsection{Open Problems}	\label{subsec:Open_problems}

The preceding subsections give complete characterizations of the finiteness of exponential moments of $\tau(x)$, $N(x)$ and $\rho(x)$ as well as the finiteness of power moments of $N(x)$ and $\rho(x)$.
In view of this, it is natural to ask for a criterion for the finiteness of power moments of $\tau(x)$.
This question is very delicate.

First of all, it is worth mentioning that in case $p \in (0,1)$, for oscillating random walks,
no criterion for the finiteness of $\E \tau^*(0)$ is known though some partial results have been obtained.
For instance, from Wald's equation it follows immediately that $\E \tau^*(x) = \infty$ when $\E \xi$ exists and equals $0$.
Using an extension of this argument, it has been shown that if $\E |\xi|^p < \infty$ for some $1 < p \leq 2$ and $\E \xi = 0$,
then $\E (\tau^*)^{1/p} = \infty$
(see \cite{Burkholder+Gundy:70} for the case $p=2$ and \cite{Chow+Robbins:91} for the case $1<p<2$).
Further, it was shown in \cite{Chow:94} that if $\xi$ is concentrated on $\{-1,0,1,2,3,\ldots\}$, then $\E (\tau^*)^{1-1/p} < \infty$ iff
\begin{equation}	\label{eq:Chows_criterion}
\int_0^{\infty} \Prob\{\xi > t\}^{1/p} \, {\mathrm d}t ~<~\infty.
\end{equation}
Recently, progress in this matter has been achieved in \cite{Uchiyama:11}.

A criterion for $\E \tau(x)^p < \infty$ in the oscillating case would presumably be connected to a criterion for the finiteness of $\E \tau^*(x)^p$.

Even when the underlying $(S_n)_{n \geq 0}$ is positively divergent, a criterion for $\E \tau^p < \infty$ is not easy to obtain.
It is not hard to see that $\E \tau^*(x)^p < \infty$ for some $x \geq 0$ implies $\E \tau(x)^p < \infty$ for all $x \geq 0$:

\begin{Prop}	\label{Prop:Etau*^p<infty=>Etau^p<infty}
Let $p>0$. When $\E (\tau^*)^p < \infty$, then $\E \tau(x)^p < \infty$ for all $x \geq 0$.
\end{Prop}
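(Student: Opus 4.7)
The plan is to bound $\tau(x)$ by a sum of a random number of i.i.d.\;copies of $\tau^*$. The key pointwise observation is that if $S_{n-1} > x+M$ and $\eta_n > -M$, then $T_n = S_{n-1}+\eta_n > x$ and hence $\tau(x) \le n$. Fix $M \ge 0$ with $\theta := \Prob\{\eta > -M\} > 0$, which exists because $\eta$ is a.s.\;finite.

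Since $\E (\tau^*)^p < \infty$ in particular implies $\tau^* < \infty$ a.s., the strict ascending ladder epochs $(\sigma_k)_{k \ge 0}$ of $(S_n)$ (with $\sigma_0 = 0$) are a.s.\;finite, and the gaps $\sigma_k-\sigma_{k-1}$ are i.i.d.\;copies of $\tau^*$. Set $L := \inf\{k \ge 1 : S_{\sigma_k} > x+M\}$, so that $\sigma_L = \tau^*(x+M)$, and $J := \inf\{j \ge 0 : \eta_{\sigma_{L+j}+1} > -M\}$. Taking $n = \sigma_{L+J}+1$ in the above observation yields $\tau(x) \le \sigma_{L+J}+1$, and it suffices to prove $\E \sigma_{L+J}^p < \infty$.

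To control $\E \sigma_L^p = \E \tau^*(x+M)^p$, I would first establish the auxiliary lemma that $\E (\tau^*)^p < \infty$ implies $\E \tau^*(y)^p < \infty$ for every $y \ge 0$. Writing $\tau^*(y) = \sigma_{\nu(y)}$ with $\nu(y) := \inf\{k \ge 1 : S_{\sigma_k} > y\}$ the first passage time of the positive i.i.d.\;ladder-height walk past $y$, one checks that $\Prob\{\nu(y) \ge k\}$ decays geometrically in $k$ (since $\Prob\{H_1 > \varepsilon\}>0$ for some $\varepsilon > 0$). Although $\nu(y)$ and the gap $\sigma_k-\sigma_{k-1}$ are dependent through the walk, the event $\{\nu(y) \ge k\}$ is $\F_{\sigma_{k-1}}$-measurable and hence independent of $\sigma_k-\sigma_{k-1}$, so a summand-by-summand Minkowski estimate (respectively subadditivity for $p<1$) gives the desired bound.

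For the residual $\sigma_{L+J}-\sigma_L = \sum_{j=1}^J W_j$, with $W_j := \sigma_{L+j}-\sigma_{L+j-1}$, iterated strong Markov at the post-$\sigma_L$ ladder epochs shows that the pairs $(V_{j-1},W_j)$, where $V_{j-1} := \eta_{\sigma_{L+j-1}+1}$, are i.i.d., with $V_{j-1}$ having the law of $\eta$ and $W_j$ the law of $\tau^*$ (although within each pair $V_{j-1}$ and $W_j$ may be dependent). Hence $J$ is geometrically distributed with parameter $\theta$, so all moments of $J$ are finite; and on $\{J=j_0\}$, $W_1,\ldots,W_{j_0}$ are conditionally i.i.d.\ with $p$-th moment at most $\E (\tau^*)^p/(1-\theta)$. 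A power-mean (respectively subadditivity) bound then gives $\E(\sigma_{L+J}-\sigma_L)^p<\infty$; combining with $\E \sigma_L^p<\infty$ via $\E(A+B)^p\le c_p(\E A^p+\E B^p)$ completes the proof. The main obstacle is the auxiliary lemma: the dependence between ladder-gap lengths and ladder heights precludes a direct random-sum bound, and is handled by the conditioning on $\F_{\sigma_{k-1}}$ described above.
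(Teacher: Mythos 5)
Your proof is correct. Both your argument and the paper's share the same high-level strategy: bound $\tau(x)$ by a random ladder epoch where the decision to stop is a geometric trial over i.i.d.\ ladder blocks, then control the $p$-th moment of a geometric-random sum of i.i.d.\ copies of $\tau^*$. The stopping criterion you use differs from the paper's: you first run to $\sigma_L=\tau^*(x+M)$ and then stop at the first subsequent ladder epoch $\sigma_{L+J}$ whose \emph{next} perturbation satisfies $\eta_{\sigma_{L+J}+1}>-M$, giving $\tau(x)\le\sigma_{L+J}+1$ directly at level $x$; the paper instead stops at the first ladder epoch $\tau^*_n$ at which $\eta_{\tau^*_n}\ge \xi_{\tau^*_n}+s$, which yields $\tau(s)\le\tau^*_\sigma$ for a \emph{fixed} $s$, and then passes to an arbitrary $x$ by a separate step using level-$1$ ladder epochs and the fact that $\E\tau^*(1)^p<\infty$. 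In both cases the auxiliary fact $\E(\tau^*)^p<\infty\Rightarrow\E\tau^*(y)^p<\infty$ for all $y\ge 0$ is needed (you prove it via the $\F_{\sigma_{k-1}}$-conditioning/Minkowski bound; the paper takes it for granted when invoking its extension step), and both proofs exploit that ladder blocks are i.i.d.\ with respect to the full $(\xi,\eta)$-filtration. A small bookkeeping point worth flagging: your conditional bound $\E[W_k^p\mid J=j_0]\le\E(\tau^*)^p/(1-\theta)$ presupposes $\theta<1$; if $\theta=1$ then $J=0$ a.s.\ and the residual term vanishes, so the argument still goes through, but you should state this degenerate case explicitly.
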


Unlike in the situation of exponential moments
and caused by possible big jumps to the right coming from the perturbations $\eta_n$,
the converse implication does not hold.
In other words, $\E (\tau^*(0))^p < \infty$ is not necessary for $\E \tau(x)^p < \infty$ to hold.
This is observed in the following proposition:

\begin{Prop}	\label{Prop:Etau*=infty_and_Etau<infty}
Fix $p > 0$. Assume that for some $c \geq 0$, $\Prob\{\xi \geq -c\} = 1$
and set $\sigma(x)	 :=	\inf\{k \in \N: \eta_k-(k-1)c > x\}$.
Then $\E \sigma(x)^p<\infty$ implies $\E \tau(x)^p<\infty$.

Moreover, if $\lim_{y \to \infty} y\Prob\{\eta>y\} = s \in [0,\infty]$,
then $\E \sigma(x)^p$ is finite or infinite according to whether $s>cp$ or $s<cp$.
\end{Prop}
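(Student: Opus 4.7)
The plan splits cleanly into two parts. For the first assertion I would establish the pointwise domination $\tau(x)\le\sigma(x)$ a.s.\ and deduce $\E\tau(x)^p\le\E\sigma(x)^p$. To see the domination, observe that on $\{\sigma(x)=k\}$ the definition of $\sigma$ gives $\eta_k>x+(k-1)c$, while the standing bound $\xi_i\ge -c$ yields $S_{k-1}\ge -(k-1)c$; adding the two inequalities shows $T_k=S_{k-1}+\eta_k>x$, so $\tau(x)\le k=\sigma(x)$.

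For the moment criterion on $\sigma(x)$, independence of the $\eta_k$ gives
\begin{equation*}
\Prob\{\sigma(x)>n\}=\prod_{k=1}^n\bigl(1-p_k\bigr),\qquad p_k:=\Prob\{\eta>x+(k-1)c\},
\end{equation*}
and the standard equivalence $\E\sigma(x)^p<\infty\Leftrightarrow\sum_{n\ge 1}n^{p-1}\Prob\{\sigma(x)>n\}<\infty$ reduces everything to the decay rate of this product. In the generic range $c>0$ and $0<s<\infty$, the hypothesis $y\Prob\{\eta>y\}\to s$ forces $p_k\sim s/(c(k-1))$; truncating at some $k_0$ beyond which $p_k\le 1/2$ and using $-\log(1-p_k)=p_k+O(p_k^2)$ one obtains
\begin{equation*}
-\log\Prob\{\sigma(x)>n\}=\sum_{k=1}^n p_k+O(1)=(s/c)\log n+o(\log n),
\end{equation*}
so $\Prob\{\sigma(x)>n\}=n^{-s/c+o(1)}$ and the tail-sum series converges iff $s/c>p$, i.e.\ $s>cp$. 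The boundary regimes are dispatched by monotone comparison: if $s=\infty$ then $p_k\ge C/k$ eventually for every $C>0$, hence all moments are finite; if $c>0$ and $s=0$ then $p_k=o(1/k)$, so $-\log\Prob\{\sigma(x)>n\}=o(\log n)$ and no power moment is finite; and if $c=0$ then the hypothesis $s>0$ forces $\Prob\{\eta>x\}>0$, in which case $\sigma(x)$ is geometric and every moment is finite.

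The only delicate step is the passage from the pointwise limit $y\Prob\{\eta>y\}\to s$ to the Cesaro-type asymptotic $\sum_{k=1}^n p_k=(s/c)\log n+o(\log n)$. A standard $\varepsilon$--$N$ argument suffices, but one must take care to absorb the finitely many initial $p_k$ (which may be close to $1$) into a bounded prefactor, and to control the quadratic remainder $\sum p_k^2$, which is finite because $p_k=O(1/k)$ on the tail. Once this asymptotic is in place, the translation between the decay of $\Prob\{\sigma(x)>n\}$ and the finiteness of $\E\sigma(x)^p$, via $\E\sigma(x)^p=p\int_0^\infty t^{p-1}\Prob\{\sigma(x)>t\}\,\dx t$, is routine.
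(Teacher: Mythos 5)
Your proof is correct, and the first half (deducing $\tau(x)\le\sigma(x)$ from $S_{k-1}\ge-(k-1)c$ and $\eta_k>x+(k-1)c$) is exactly the observation the paper uses, though the paper states it without the spelled-out verification.

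For the second half you take a genuinely different route. The paper applies Raabe's test directly to the series
$\sum_{k\ge1}a_k$ with $a_k=k^{p-1}\prod_{j=1}^k\Prob\{\eta\le x+c(j-1)\}$, computing
\begin{equation*}
k\Big(\tfrac{a_k}{a_{k+1}}-1\Big)~=~k\Prob\{\eta>x+ck\}-(p-1)\Prob\{\eta\le x+ck\}+o(1)~\to~\tfrac{s}{c}-(p-1),
\end{equation*}
so the threshold $s/c>p$ versus $s/c<p$ drops out immediately and the degenerate cases $s\in\{0,\infty\}$ are covered in the same breath (the limit being $1-p<1$ or $+\infty>1$). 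You instead pass to logarithms, use $-\log(1-p_k)=p_k+O(p_k^2)$ together with the Cesaro estimate $\sum_{k\le n}p_k=(s/c)\log n+o(\log n)$ to get $\Prob\{\sigma(x)>n\}=n^{-s/c+o(1)}$, and then compare the tail-sum. That is more elementary (no ratio test) but requires a bit more bookkeeping, and you rightly flag the Cesaro passage as the step needing care. The separate treatment of $s=\infty$, $s=0$, and $c=0$ is then unavoidable in your version, whereas the paper's Raabe calculation absorbs them automatically. Both arguments share the same tacit hypothesis that $\Prob\{\eta\le x\}>0$, so that $a_k>0$ (for Raabe) respectively $\Prob\{\sigma(x)>n\}>0$ (for the logarithm); the degenerate case $\Prob\{\eta\le x\}=0$, where $\sigma(x)=1$ a.s.\ regardless of $s$ and $c$, is not addressed in either proof, but this is a harmless edge case outside the intended scope. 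Overall the proposal is sound.
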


From this proposition it immediately follows that $\E \tau(x)^p < \infty$ does not imply $\E (\tau^*)^p < \infty$ .
Indeed, when $\xi \geq -1$ a.s.\ but $\xi$ is such that $(S_n)_{n \geq 0}$ is drifting to $-\infty$ or oscillating,
then $\E \tau^*(0) = \infty$ and, hence, $\E \tau^*(0)^p = \infty$ for all $p \geq 1$.
On the other hand, when $\eta$ is such that $\lim_{y \to \infty} y\Prob\{\eta>y\} = \infty$,
then $\E \sigma(0)^p < \infty$ for all $p > 0$ and, consequently, $\E \tau(0)^p < \infty$ for all $p > 0$.

\subsection{Notation and Overview}	\label{subsec:notation}

At this point, we introduce some notation which is used throughout the article.
First of all, whenever it is convenient, we write $\tau$, $N$ and $\rho$ for $\tau(0)$, $N(0)$ and $\rho(0)$, respectively.
Analogously, we write $\tau^*$, $N^*$ and $\rho^*$ for $\tau^*(0)$, $N^*(0)$ and $\rho^*(0)$, respectively.

As usual, $f(t) \sim g(t)$ as $t \to \infty$ for functions $f$ and $g$, means that $f(t)/g(t) \to 1$ as $t \to \infty$.
Similarly, $f(t) \asymp g(t)$ as $t \to \infty$ means that $0 < \liminf_{t \to \infty }f(t)/g(t) \leq \limsup_{t \to \infty} f(t)/g(t) < \infty$.

We finish this section with an overview over the further organization of the article.
The proofs of the main results are given in Sect.\;\ref{sec:Proofs}.
The proofs concerning finiteness of moments of $N(x)$, Theorems\;\ref{Thm:exponential_N} and \ref{Thm:power_N},
are based on general results on finiteness of (exponential and power) moments of shot-noise processes.
These results and their proofs can be found in Sect.\;\ref{sec:shot-noise}.
The appendix contains auxiliary results from random walk theory
(Sect.\;\ref{subsec:aux_RW}) and some elementary facts (Sect.\;\ref{subsec:aux_elementary}).

\section{Shot-Noise Processes} \label{sec:shot-noise}

Let $\xi$ be a real-valued random variable with $\Prob\{\xi=0\}<1$ and $(X(t))_{t \in \R}$ a doubly infinite nonnegative stochastic process with nondecreasing paths such that $\lim_{t\to-\infty}X(t)=0$ a.s. Any dependence between $(X(t))_{t\in\R}$ and $\xi$ is allowed. Further, given a sequence $((X_n(t))_{t \in \R},\xi_n))_{n \geq 1}$ of independent copies of $((X(t))_{t \in \R}, \xi)$, define
\begin{equation*}
S_0:=0, \quad   S_n:=\xi_1+\cdots+\xi_n,    \quad   n \in \N,
\end{equation*}
and then the \emph{renewal shot-noise process} $Z(\cdot)$ with random response functions
$X_n(\cdot)$ by
\begin{equation*}
Z(t) ~:=~ \sum_{n \geq 1} X_n(t-S_{n-1}), \quad   t \in \R.
\end{equation*}

\subsection{Examples of Shot-Noise Processes}	\label{subsec:shot-noise_examples}

In this section, we give some examples of shot-noise processes.

\begin{Exa}  \label{Exa:current}
The current at time $t$ induced by an electron that arrives at time $s$ at the anode of a vacuum tube equals $f(t-s)$ for some appropriate deterministic response function $f$ vanishing on the negative halfline. Assuming that $X(t)=f(t)$ and the $S_n$ are the arrival times in a homogeneous Poisson process, the total current at time $t$ equals
\begin{equation*}
Z(t) ~=~ \sum_{n \geq 1} f(t-S_{n-1}),    \quad   t \geq 0.
\end{equation*}
This is the classical shot-noise process \cite{Schottky:18}.
\end{Exa}

\begin{Exa}  \label{Exa:shot-noise_deterministic_response}
A very popular model in the literature has $X(\cdot)$ in multiplicative form $X(t)=\eta f(t)$ for a nonnegative random variable $\eta$ and some deterministic $f$ (see \cite{DoneyOBrian:91, Iks:01, Iks:12, Lebedev:02, McCormick:97, Rice:77, Takacs:56a} and the references therein). In the particular case $f(t)=e^{at}$ for some $a \not = 0$, the corresponding shot-noise process is a perpetuity, namely
\begin{equation*}
Z(t)=e^{at}\sum_{n \geq 1}e^{-aS_{n-1}} \eta_n, \quad t \in \R.
\end{equation*}
\end{Exa}

The moment results for shot-noise processes we are going to derive hereafter will be a key  in the analysis of the moments of $N(t)$, the number of visits to $(-\infty,t]$ of a PRW $(T_n)_{n\ge 1}$. The link between $N(t)$ and shot-noise processes is disclosed in the following example.

\begin{Exa}  \label{Exa:Z(t)=N(t)}
If $X_n(t)=\1_{\{\eta_n \leq t\}}$ for a real-valued random variable $\eta_n$, $n \geq 1$, then $Z(t)$ equals the number of visits to $(-\infty, t]$ of the PRW $(S_{n-1}+\eta_n)_{n \geq 1}$, thus $Z(t)=N(t)$.
\end{Exa}

\subsection{Finiteness of Exponential Moments of Shot-Noise \\ Processes}	\label{subsec:finitieness_exponential_shot-noise}

Our first moment result for shot-noise processes, assuming $\xi \geq 0$ a.s., provides two conditions which combined are necessary and sufficient for the finiteness of $\E e^{aZ(t)}$ for fixed $a>0$ and $t\in\R$. As before, let $\tau^*(x)=\inf\{n\ge 1:S_{n}>x\}$. Moreover, we denote by $\mathbb{U}:=\sum_{n \geq 0} \Prob\{S_n \in \cdot\}$ the renewal measure associated with $(S_{n})_{n \geq 0}$.

\begin{Theorem}   \label{Thm:shot-noise_positive}
Let $\xi\ge 0$ a.s. Then, for any $a>0$ and $t\in\R$,
\begin{align}\label{eq:Ee^aZ(t)<infty}
\E e^{aZ(t)}<\infty
\end{align}
holds if and only if
\begin{align}
&r(t):=\int \bigg(\E e^{aX(t-y)}-1\bigg)\,\mathbb{U}({\rm d}y)<\infty	\label{eq:r(t)<infty}\\
\text{and}\quad&l(t):=\E\left(\prod_{n=1}^{\tau^*}e^{aX_{n}(t-S_{n-1})}\right)<\infty.	\label{eq:l(t)<infty}
\end{align}
Moreover, \eqref{eq:r(t)<infty} alone implies $\E e^{aZ(t_{0})}<\infty$ for some $t_{0}\leq t$.
\end{Theorem}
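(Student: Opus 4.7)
Write $\phi(s) := \E e^{a Z(s)}$ and $g(s) := \E e^{a X(s)}$, and let $\tau := \tau^*(t)$. \emph{Necessity:} Since $X_k \geq 0$, the pathwise inequality $Z(t) \geq Z_1(t) := \sum_{k=1}^{\tau} X_k(t - S_{k-1})$ gives $l(t) = \E e^{a Z_1(t)} \leq \phi(t)$. For $r(t)$, apply the elementary inequality $\prod_{n \geq 1}(1 + c_n) \geq 1 + \sum_{n \geq 1} c_n$ (for $c_n \geq 0$) to $c_n := e^{a X_n(t - S_{n-1})} - 1$ and take expectations; the independence of $X_n$ and $S_{n-1}$ identifies the right-hand side as $1 + r(t)$.

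\emph{The moreover:} Assume $r(t) < \infty$ and show $\phi(t_0) < \infty$ for some $t_0 \leq t$. Because $g$ is nondecreasing with $g(-\infty) = 1$ (as $X(-\infty) = 0$), $r$ is nondecreasing and $r(t_0) \to 0$ as $t_0 \to -\infty$ by dominated convergence (dominating function $g(t - \cdot) - 1$, $\mathbb{U}$-integrable by hypothesis). Conditioning on $\sigma(\xi_n : n \geq 1)$ and using the conditional independence of the $X_n$'s gives
\[
\E\bigl[e^{a Z(t_0)} \,\big|\, \sigma(\xi_n)\bigr] = \prod_{n \geq 1} \bar{h}(t_0 - S_{n-1}, \xi_n), \qquad \bar{h}(s, y) := \E\bigl[e^{a X(s)} \,\big|\, \xi = y\bigr].
\]
Since $\prod_n(1 + u_n) \leq \exp(\sum_n u_n)$, setting $u_n := \bar{h} - 1 \geq 0$ yields $\phi(t_0) \leq \E \exp(W)$ for $W := \sum_n(\bar{h}(t_0 - S_{n-1}, \xi_n) - 1)$, which has mean $r(t_0)$. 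The key step is to show $\E e^W < \infty$ for $t_0$ sufficiently negative via a moment/concentration analysis of $W$, combined with a monotonicity/truncation argument on $X$ to control higher moments.

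\emph{Sufficiency:} Additionally assume $l(t) < \infty$. Decompose $Z(t) = Z_1(t) + \hat Z(t)$ at $\tau$: with $V := S_\tau - t > 0$, the regenerative property ($(\xi_{\tau+j}, X_{\tau+j})_{j \geq 1}$ is i.i.d.\;as $(\xi_j, X_j)_{j \geq 1}$ and independent of $\mathcal{F}_\tau$) gives $\hat Z(t) \stackrel{d \mid \mathcal{F}_\tau}{=} Z'(-V)$ for an independent copy $Z'$ of $Z$; hence
\[
\phi(t) = \E\bigl[e^{a Z_1(t)}\, \phi(-V)\bigr].
\]
Pick $t_0 \leq t$ from the moreover. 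On $\{V \geq -t_0\}$, $\phi(-V) \leq \phi(t_0) < \infty$ by monotonicity. On the complementary event $\{V < -t_0\}$ (vacuous if $t_0 \geq 0$), bound $\phi(-V)$ by iterating the renewal equation $\phi(s) = \E[e^{a X(s)} \phi(s - \xi)]$ in an independent copy up to the stopping time $\nu := \inf\{n \geq 0 : S'_n \geq -V - t_0\}$ that drives the argument below $t_0$; the accompanying product has finite expectation by the monotonicity bound $X_j(-V - S'_{j-1}) \leq X_j(0)$ on $\{j \leq \nu\}$ together with Wald-type estimates. This yields $\phi(t) \leq C \cdot l(t) < \infty$ for a finite constant $C$.

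\emph{Main obstacle.} The crux is the concentration step in the moreover: if the $\bar{h}(t_0 - S_{n-1}, \xi_n)$ were independent (the Poisson/Campbell setting) one would obtain $\phi(t_0) = \exp(r(t_0))$ instantly, but the shared walk increments force genuine dependence. Establishing exponential integrability of $W$ under the single hypothesis $r(t_0) < \infty$ — possibly strengthened via monotonicity and/or truncation of $X$ to control higher-moment analogues — is the essential technical input, while the lifting in sufficiency is a standard consequence of the regenerative decomposition.
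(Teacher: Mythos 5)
Your necessity direction is correct and coincides with the paper's proof (the inequality $\prod(1+c_n)\geq 1+\sum c_n$ for the bound on $r(t)$ and the restriction to the first $\tau^*$ factors for the bound on $l(t)$). However, the rest contains a genuine gap, which you yourself flag as the ``main obstacle'': after conditioning on the $\xi$-sequence and applying $1+u\leq e^u$, you reduce $\E e^{aZ(t_0)}<\infty$ to $\E e^W<\infty$ where $W=\sum_n\big(\bar h(t_0-S_{n-1},\xi_n)-1\big)$ has mean $r(t_0)$. This is not a reduction at all: $W$ is itself a shot-noise type sum of nonnegative dependent terms driven by the same walk, and $\E W<1$ gives no control whatsoever over $\E e^W$; you have replaced the target exponential-moment estimate by one of the same difficulty. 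No amount of ``concentration'' will close this without a new idea, because the $S_{n-1}$'s genuinely couple the terms.

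The paper's resolution is much more elementary and sidesteps the conditioning entirely. Pick $t_0$ so small that $r(t_0)<1$. Set $b_n:=\E e^{aZ_n(t_0)}$ for the finite partial sum $Z_n(t_0)=\sum_{k=1}^n X_k(t_0-S_{k-1})$; these are finite by Fubini. From the telescoping identity
\begin{equation*}
e^{aZ_n(t_0)}-1=\sum_{k=1}^n\bigl(e^{aX_k(t_0-S_{k-1})}-1\bigr)\prod_{j=k+1}^n e^{aX_j(t_0-S_{j-1})},
\end{equation*}
one upper-bounds the inner product (using monotonicity of $X_j$ and $S_j\geq S_k$ for $j\geq k$, i.e.\ $\xi\geq0$) by a quantity independent of the $k$-th factor and distributed as $e^{aZ_{n-1}(t_0)}$. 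Taking expectations yields the closed recursion $b_n-1\leq r(t_0)\,b_{n-1}$, whence $b_n\leq(1-r(t_0))^{-1}$ uniformly in $n$, and monotone convergence gives $\E e^{aZ(t_0)}<\infty$. No concentration argument is needed; a one-step iteration on partial sums suffices. You should replace your Fernique-style step by this recursion.

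Two further, smaller remarks. First, you set $\tau:=\tau^*(t)$, whereas the theorem's $l(t)$ is defined with the fixed ladder epoch $\tau^*=\tau^*(0)$; for the necessity direction your choice dominates $l(t)$ only when $t\geq 0$ and \emph{underestimates} it for $t<0$ (where $\tau^*(t)=1$), so you must use $\tau^*(0)$ to get the stated implication, cf.\ Remark~\ref{Rem:shot-noise_positive} which licenses $\tau^*$-replacement only by \emph{larger} stopping times. Second, your sufficiency lifting from $t_0$ to $t$ via $\phi(t)=\E[e^{aZ_1(t)}\phi(-V)]$ and an inner iteration over $\nu$ involves a product $\prod_{j\le\nu}e^{aX_j(0)}$ with $\nu$ and the $X_j$'s dependent; whether this has finite expectation is exactly the kind of thing $l(t)<\infty$ is supposed to control, but your sketch does not actually tie it back to $l$. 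The paper's lifting works differently: it splits at the fixed $\tau^*$, picks $\varepsilon>0$ so that $\beta:=\E L(t)\1_{\{S_{\tau^*}\le\varepsilon\}}<1$ (possible since $l(t)<\infty$), derives the recursion $\E e^{aZ(t)}\le(1-\beta)^{-1}\E L(t)\,\E e^{aZ(t-\varepsilon)}$, and iterates finitely many times to reach level $t_0$. That argument cleanly quarantines the dependence inside $L(t)$, whose expectation is exactly $l(t)$.
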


\begin{Rem}	\label{Rem:shot-noise_positive}
It can be extracted from the proof given next that we may replace $\tau^*$ in \eqref{eq:l(t)<infty} by any other $(S_{n})_{n \geq 0}$-stopping time $\tau\ge\tau^*$. Note also that, unlike the case when $\Prob\{\xi<0\}>0$ to be discussed later, $\tau^*$ coincides with $\widehat{\tau}:=\inf\{n\ge 1:\xi_{n}>0\}$ and thus has a geometric distribution with parameter $\Prob\{\xi>0\}$. Finally, \eqref{eq:l(t)<infty} is a trivial consequence of \eqref{eq:r(t)<infty} if $\xi>0$ a.s.
\end{Rem}

\begin{proof}
Observe that
\begin{align}
\begin{split}\label{eq:lower bound}
e^{aZ(t)}-1 ~&=~ \sum_{n\ge 1}\bigg(e^{aX_n(t-S_{n-1})}-1\bigg)\prod_{k \geq n+1}
e^{aX_{k}(t-S_{k-1})}\\
&\geq\ \sum_{n\ge 1}\bigg(e^{aX_n(t-S_{n-1})}-1\bigg)
\end{split}\\
\text{and}\quad e^{aZ(t)} ~&\geq~ \prod_{n=1}^{\tau^*}e^{aX_{n}(t-S_{n-1})}
\label{l(t) lower bound}
\end{align}
hold whenever $Z(t) < \infty$. Taking expectations in the above inequalities therefore gives the implications
``\eqref{eq:Ee^aZ(t)<infty}$\Rightarrow$\eqref{eq:r(t)<infty}'' and ``\eqref{eq:Ee^aZ(t)<infty}$\Rightarrow$\eqref{eq:l(t)<infty}''.

\vspace{.2cm}
In turn, assume that \eqref{eq:r(t)<infty} and \eqref{eq:l(t)<infty} hold.
Let $(\tau_n^*)_{n \geq 0}$ be the zero-delayed renewal sequence of strictly ascending ladder epochs of $(S_{n})_{n \geq 0}$, thus $\tau_{1}^*=\tau^*$ and define
\begin{equation*}
L(s) ~:=~ \prod_{n=1}^{\tau^*}e^{aX_{n}(s-S_{n-1})}
\end{equation*}
for $s \in \R$. Then $\E L(s) \leq \E L(t)<\infty$ for all $s \leq t$, for $L(\cdot)$ is nondecreasing and \eqref{eq:l(t)<infty} holds.
Pick $\varepsilon>0$ so small that
\begin{equation*}
\E L(s)\1_{\{S_{\tau^*}\leq \varepsilon\}} \leq \beta := \E L(t)\1_{\{S_{\tau^*} \leq \varepsilon\}} < 1
\end{equation*}
for all $s \leq t$. Next define
$Z_{0}(\cdot)=Z_{0}'(\cdot)=0$ and
\begin{equation*}
Z_{n}(\cdot) ~:=~ \sum_{k=1}^{n} X_{k}(\cdot-S_{k-1}),	\quad
Z_{n}'(\cdot) ~:=~ \sum_{k=\tau^*+1}^{\tau^*+n} X_{k}(\cdot-(S_{k-1}-S_{\tau^*}))
\end{equation*}
for $n\in\N$. Plainly, $Z_{n}(\cdot)\uparrow Z(\cdot)$ and similarly
\begin{equation*}
Z_{n}'(\cdot)\uparrow Z'(\cdot) ~:=~ \sum_{n \geq \tau^*+1}X_{n}(\cdot-(S_{n-1}-S_{\tau^*}))
\end{equation*}
as $n\to\infty$. Note that, each $Z_{n}'(\cdot)$ is a copy of $Z_{n}(\cdot)$ and further independent of $(L(\cdot),S_{\tau^*})$.
Now observe that
\begin{align*}
Z_{n}(t) ~\le~ &
Z_{\tau^*}(t)+Z_{n-\tau^*}'(t)\1_{\{\tau^*\leq n,S_{\tau^*}\leq \varepsilon\}}	\\
& +Z_{n-\tau^*}'(t-\varepsilon)\1_{\{\tau^*\leq n,S_{\tau^*}>\varepsilon\}}		\\
~\le~& Z_{\tau^*}(t)+Z_{n}'(t)\1_{\{S_{\tau^*}\leq \varepsilon\}}
+Z_{n}'(t-\varepsilon)\1_{\{S_{\tau^*}>\varepsilon\}}
\end{align*}
and therefore, using the stated independence properties,
\begin{align}
\E e^{aZ_{n}(t)} ~&\leq~
\E\left(L(t)\1_{\{S_{\tau^*}\leq \varepsilon\}}e^{aZ_n'(t)}+L(t)\1_{\{S_{\tau^*}>\varepsilon\}}e^{aZ_n'(t-\varepsilon)}\right)	\notag	\\
&\le~	\beta\,\E e^{aZ_{n}(t)}+\E L(t)\,\E e^{aZ_{n}(t-\varepsilon)}		\label{eq:Z_n(t)}
\end{align}
for any $n \in \N$.
Now notice that by \eqref{eq:r(t)<infty}, $\E e^{aX_1(t)} < \infty$ and hence, by Fubini's theorem,
\begin{equation}	\label{eq:Ee^aZ_n(t)<infty}
\E e^{aZ_n(t)} ~\leq~ \E \prod_{k=1}^n e^{aX_k(t)} ~=~ \left(\E e^{a X_1(t)} \right)^n ~<~ \infty.
\end{equation}
By solving \eqref{eq:Z_n(t)} for $\E e^{aZ_{n}(t)}$ and letting $n\to\infty$, we arrive at
\begin{equation*}
\E e^{aZ(t)} ~\leq~ (1-\beta)^{-1}\,\E L(t)\,\E e^{aZ(t-\varepsilon)}
\end{equation*}
and then upon successively repeating this argument at
\begin{equation*}
\E e^{aZ(t)} ~\leq~ (1-\beta)^{-n} \E e^{aZ(t-n\varepsilon)} \,\prod_{k=0}^{n-1} \E L(t-k \varepsilon)
\end{equation*}
for any $n\in\N$.
Hence, $\E e^{aZ(t)}<\infty$ as claimed if we verify $\E e^{aZ(t_{0})}<\infty$ for some $t_{0}<t$.

To this end, pick $t_{0}$ such that $r(t_{0})<1$ which is possible
because \eqref{eq:r(t)<infty} in combination with the monotone convergence theorem entails $\lim_{t\to -\infty}r(t)=0$.
Note also that $r(t_{0})<\infty$ implies $\E e^{aX(t_{0})}<\infty$. Define
\begin{align*}
b_{n}	&	:=	\E e^{aZ_{n}(t_0)}
\quad	\text{and}	\quad	c_{n}:=\sum_{k=1}^{n} \E \bigg(e^{aX_{k}(t_0-S_{k-1})}-1\bigg)
\end{align*}
for $n \in \N_{0}$, in particular, $b_{0}=1$, $c_0=0$. The $b_n$'s are finite by the same argument as in \eqref{eq:Ee^aZ_n(t)<infty}. Moreover, $\sup_{n\ge 1}c_n=r(t_{0})<1$. With this notation and for any $n\in\N$, we obtain (under the usual convention that empty products are defined as 1)
\begin{align*}
e^{aZ_{n}(t_0)}-1 ~&=~
\sum_{k=1}^{n}\bigg(e^{aX_{k}(t_0-S_{k-1})}-1\bigg)\prod_{j=k+1}^{n}e^{aX_{j}(t_0-S_{j-1})}	\\
& \le~ \sum_{k=1}^{n}\bigg(e^{aX_{k}(t_0-S_{k-1})}-1\bigg)\prod_{j=k+1}^{n} e^{aX_{j}(t_0-S_{j-1}+S_{k})}	\\
& \le~ \sum_{k=1}^{n}\bigg(e^{aX_{k}(t_0-S_{k-1})}-1\bigg)\prod_{j=k+1}^{k+n-1} e^{aX_{j}(t_0-S_{j-1}+S_{k})}.
\end{align*}
For fixed $k,n \in \N$, the random variable $\prod_{j=k+1}^{k+n-1}e^{aX_j(t-S_{j-1}+S_k)}$ is independent of $e^{aX_k(t_0-S_{k-1})}$ and has the same law as $e^{aZ_{n-1}(t_0)}$. Taking expectations, we get
\begin{equation*}b_{n}-1\leq c_{n}b_{n-1}\leq r(t_{0})b_{n-1}\quad\text{for }n\in\N \end{equation*}
and thereupon at $b_n\leq (1-r(t_{0}))^{-1}$ for all $n\in\N$. Finally letting $n\to\infty$, we conclude $\E e^{aZ(t_0)}<\infty$.

The previous argument has only used \eqref{eq:r(t)<infty} and thus also shown the last assertion of the theorem.
\end{proof}

We will now carry over the previous result to the case when $(S_{n})_{n \geq 0}$ is a positively divergent random walk taking negative values with positive probability.
As before, let $\mathbb{U}$ be the pertinent intensity measure and $\mathbb{U}^{>}$ the renewal measure of the associated renewal process
$(S_{n}^{>})_{n \geq 0}$, say, of strictly ascending ladder heights with increments $\xi_{n}^{>} = S_n^> - S_{n-1}^>$, $n \in \N$.
The corresponding ladder epochs are denoted as $\tau_{n}^*$ for $n \in \N$, thus $\tau_{1}^* = \tau^*$.
Further, defining $M^*$ to be a generic copy of $\inf_{n \geq 0} S_{n}$ that is independent of any other occurring random variable, a well-known identity in the fluctuation theory of random walks (see \textit{e.g.}\ \cite[Theorem\;VIII.2.2]{Asmussen:03} after a change of sign) states that
\begin{equation}\label{eq:renewal measure - minimum}
\mathbb{U}(B)	~=~	(\E \tau^*) \,Q*\mathbb{U}^{>}(B)	~=~	(\E \tau^*) \,\E \mathbb{U}^{>}(B-M^*)
\end{equation}
for all Borel subsets $B$ of $\R$, where $Q := \Prob\{M^* \in \cdot\}$ and $*$ denotes convolution.

\begin{Theorem}   \label{Thm:shot-noise_general}
Let $(S_{n})_{n \geq 0}$ be positively divergent and $\Prob\{\xi<0\}>0$. Then the following assertions are equivalent for any $a>0$:
\begin{align}
\E e^{aZ(t)} < \infty	&	\quad \text{for some } t\in\R,\label{3236}\\
\E e^{aZ(t)} < \infty	&	\quad \text{for all } t\in\R,\label{3237}\\
r^{>}(t) < \infty			&	\quad \text{for all } t\in\R,\label{3238}\\
r^{>}(t) < \infty			&	\quad \text{for some } t\in\R,\label{3239}
\end{align}
where $l(t)$ is defined as in \eqref{eq:l(t)<infty} and
\begin{equation*}
r^{>}(t):=\int (l(t-u)\!-\!1)\,\mathbb{U}^{>}({\mathrm d}u)
\end{equation*}
for $t\in\R$. Furthermore, the conditions imply $r(t)<\infty$ and $l(t)<\infty$ for all $t\in\R$.
\end{Theorem}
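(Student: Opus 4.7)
My plan is to reduce the theorem to Theorem~\ref{Thm:shot-noise_positive} by grouping the summands of $Z(t)$ over the excursions between consecutive strictly ascending ladder epochs of $(S_n)$. Let $0 = \tau_0^* < \tau_1^* < \tau_2^* < \ldots$ be the ladder epochs (all a.s.\ finite because $(S_n)$ is positively divergent) and set $S_n^> := S_{\tau_n^*}$, so that $\xi_n^> := S_n^> - S_{n-1}^> > 0$ a.s. For each $n \geq 1$ define the block response
\[
\widetilde Y_n(s) := \sum_{k=\tau_{n-1}^*+1}^{\tau_n^*} X_k\bigl(s - (S_{k-1} - S_{\tau_{n-1}^*})\bigr).
\]
The family $(\widetilde Y_n)_{n \geq 1}$ consists of i.i.d.\ copies of $Y := \sum_{k=1}^{\tau^*} X_k(\cdot - S_{k-1})$; each $\widetilde Y_n$ is nondecreasing, nonnegative, vanishes at $-\infty$, and is independent of $(S_0^>, \ldots, S_{n-1}^>)$ by the strong Markov property at $\tau_{n-1}^*$. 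Summing block by block yields
\[
Z(t) = \sum_{n \geq 1} \widetilde Y_n(t - S_{n-1}^>),
\]
exhibiting $Z(\cdot)$ as a renewal shot-noise process driven by the random walk $(S_n^>)$ with strictly positive increments.

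Applying Theorem~\ref{Thm:shot-noise_positive} to this derived shot-noise and invoking Remark~\ref{Rem:shot-noise_positive} (which makes the $l$-type condition redundant once $\xi^> > 0$) produces the pointwise equivalence
\[
\E e^{aZ(t)} < \infty \iff \int\bigl(\E e^{a\widetilde Y(t-y)} - 1\bigr)\,\mathbb{U}^>(\mathrm dy) < \infty.
\]
Because $\E e^{a\widetilde Y(s)} = \E L(s) = l(s)$, the right-hand integral is exactly $r^>(t)$. This pointwise identity immediately gives \eqref{3236}$\,\Leftrightarrow\,$\eqref{3239} and \eqref{3237}$\,\Leftrightarrow\,$\eqref{3238}, so the whole theorem reduces to the implication \eqref{3239} $\Rightarrow$ \eqref{3238}.

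Propagating finiteness of $r^>$ from some $t_0$ to every $t \in \R$ is the main obstacle. The direction $t \leq t_0$ is trivial by monotonicity of $l$. For $t > t_0$ I would employ the strong Markov property of $(S_n^>)$ at the first passage time $\nu := \inf\{n \geq 0 : S_n^> > t - t_0\}$: setting $U := S_\nu^> - (t-t_0) \geq 0$, splitting the renewal sum $r^>(t) = \sum_n \E(l(t-S_n^>)-1)$ at $n = \nu$ gives
\[
r^>(t) \leq (l(t) - 1)\,\mathbb{U}^>([0, t-t_0]) + r^>(t_0),
\]
where $\mathbb{U}^>([0, t-t_0]) < \infty$ since a renewal measure with strictly positive increments is locally finite. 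The delicate remaining step is to control $l(t)$ itself. Starting from $l(t_0) \leq r^>(t_0) + 1 < \infty$ (which holds because $\mathbb{U}^>(\{0\}) \geq 1$ and hence $r^>(t_0) \geq l(t_0) - 1$), one would use the renewal equation $r^>(t) = (l(t) - 1) + \E r^>(t - \xi^>)$, splitting $\{\xi^> \leq \delta\}$ against $\{\xi^> > \delta\}$ for $\delta > 0$ small enough that $\Prob\{\xi^> > \delta\}$ is close to $1$, to extend finiteness of $l$ (and hence of $r^>$) from $(-\infty, t_0]$ to $(-\infty, t_0 + \delta]$; finitely many such steps then reach any prescribed $t$. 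Closing this self-referential bootstrap cleanly is the most delicate part of the argument.

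Once $r^>(t) < \infty$ has been established for all $t$, the \textit{furthermore} clause follows immediately: the inequality $l(t) \leq r^>(t) + 1$ gives $l(t) < \infty$ for every $t$, and combining the identity \eqref{eq:renewal measure - minimum} with the trivial bound $\E e^{aX(s)} - 1 \leq l(s) - 1$ yields
\[
r(t) \leq (\E \tau^*)\,\E\,r^>(t - M^*) < \infty,
\]
since $M^* > -\infty$ a.s.\ and $r^>$ is finite at every real point.
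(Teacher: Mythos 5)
Your reduction to the ladder-height shot-noise process is correct and gives a clean structural insight: letting $\widetilde Y_n$ be the cumulative response over the $n$th excursion between strict ladder epochs, you correctly verify $Z(t) = \sum_n \widetilde Y_n(t - S_{n-1}^>)$, and since $\xi^> > 0$ a.s., Theorem~\ref{Thm:shot-noise_positive} together with Remark~\ref{Rem:shot-noise_positive} reduces the $r$-and-$l$ pair of conditions to the single condition $r^>(t) < \infty$, yielding the pointwise equivalence $\E e^{aZ(t)} < \infty \Leftrightarrow r^>(t) < \infty$. This correctly identifies the whole theorem as equivalent to the single implication \eqref{3239}$\Rightarrow$\eqref{3238}, i.e.\ the propagation of finiteness of $r^>$ from one $t_0$ to all of $\R$. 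So far this agrees in substance with what the paper does (the paper's steps ``\eqref{3237}$\Rightarrow$\eqref{3238}'' and ``\eqref{3239}$\Rightarrow$\eqref{3236}'' are exactly the two halves of your pointwise equivalence, written out).

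The propagation step, however, is where your argument has a genuine gap, and your own caveat (``closing this self-referential bootstrap cleanly is the most delicate part'') correctly flags that it is unresolved. The difficulty is not merely technical. Your renewal-equation bootstrap $r^>(t) = (l(t)-1) + \E r^>(t-\xi^>)$ requires control of $l(t)$ for $t > t_0$, but $l(t) = \E\prod_{n=1}^{\tau^*} e^{aX_n(t-S_{n-1})}$ involves the pre-ladder excursion where $S_{n-1} \leq 0$ and can be arbitrarily negative, so the arguments $t - S_{n-1}$ fed into $X_n$ can be arbitrarily large even for $t$ just above $t_0$. The only a priori handle you have on $l$ is $l(t) \leq r^>(t)+1$, which is circular; nothing in your $\{\xi^> \leq \delta\}$ / $\{\xi^> > \delta\}$ splitting breaks the circularity, since $\xi^>$ does not see what happens on the negative excursion. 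Tellingly, your propagation step makes no use of the hypothesis $\Prob\{\xi<0\}>0$, which is essential: for $\xi \geq 0$ a.s.\ the conclusion is \emph{false} (Theorem~\ref{Thm:exponential_N}(a) exhibits a genuine $t$-dependence of finiteness), so any valid propagation argument must invoke $\Prob\{\xi<0\}>0$ somewhere. The paper's proof of ``\eqref{3236}$\Rightarrow$\eqref{3237}'' does exactly this: starting from the identity \eqref{eq:lower bound} and the representation \eqref{eq:renewal measure - minimum}, it derives the pointwise inequality
\begin{equation*}
g(t) - 1 ~\geq~ \E\Big(\big(e^{aX(t-M^*)}-1\big)\,g(t-\xi-M^*)\Big), \qquad g(t) := \E e^{aZ(t)},
\end{equation*}
and then uses $\Prob\{\xi<0\}>0 \Rightarrow \Prob\{\xi + M^* < -x\}>0$ for every $x>0$ (since $\xi + M^* \stackrel{d}{=} \inf_{n\geq 1}S_n$) to force $g(t+x)<\infty$ whenever $g(t)<\infty$. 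That is the missing idea; without a mechanism of this kind your bootstrap cannot close. Also, your derivation of $r(t)<\infty$ in the \emph{furthermore} clause via $r(t) \leq (\E\tau^*)\,\E r^>(t-M^*)$ requires $\E r^>(t-M^*)<\infty$, which does not follow merely from finiteness of $r^>$ at every real point (it would need integrability of $r^>(t-M^*)$); the paper's route is simpler, reading $r(t)<\infty$ directly off the lower bound \eqref{eq:lower bound} once $\E e^{aZ(t)}<\infty$ is known for all $t$.
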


\begin{proof}
The last assertion follows from \eqref{eq:lower bound} and $l(t)-1 \leq r^{>}(t)$.

\vspace{.2cm}
``\eqref{3236}$\Rightarrow$\eqref{3237}'' Put $g(t):=\E e^{aZ(t)}$ for $t\in\R$ and use the first line of \eqref{eq:lower bound} to infer via conditioning and with the help of \eqref{eq:renewal measure - minimum}
\begin{align*}
g(t)-1\ &=~ \sum_{n \geq 1}\E\Bigg(\Big(e^{aX_n(t-S_{n-1})}-1\Big)\prod_{k\ge n+1}
e^{aX_{k}(t-S_{k-1})}\Bigg)\\
&=~ \sum_{n\ge 1}\E\Bigg(\Big(e^{aX_n(t-S_{n-1})}-1\Big)\E\Bigg(\prod_{k\ge n+1}
e^{aX_{k}(t-S_{k-1})}\bigg|S_{n}\Bigg)\Bigg)\\
&=~ \sum_{n\ge 1}\E\bigg(\Big(e^{aX_n(t-S_{n-1})}-1\Big)g(t-S_{n})\bigg)\\
&=~ \int_{[0,\infty)}\E\bigg(\Big(e^{aX(t-y)}-1\Big)g(t-y-\xi)\bigg)\ \mathbb{U}({\mathrm d}y)\\
&=~ (\E \tau^*) \int_{[0,\infty)}\E\bigg(\Big(e^{aX(t-y-M^*)}-1\Big)g(t-y-\xi-M^*)\bigg)\ \mathbb{U}^{>}({\mathrm d}y)\\
&\geq~ \E\bigg(\Big(e^{aX(t-M^*)}-1\Big)g(t-\xi-M^*)\bigg)
\end{align*}
for any $t\in\R$. But $\Prob\{\xi<0\}>0$ implies $\Prob\{\xi+M^*<-x\}>0$ for all $x>0$
(notice that $\xi+M^* \stackrel{d}{=} \inf_{n \geq 1}S_{n}$). Consequently, $g(t+x)<\infty$ for any $x>0$ if $g(t)<\infty$.
By monotonicity, we also have $g(t+x) < \infty$ for $x<0$.

\vspace{.2cm}
``\eqref{3237}$\Rightarrow$\eqref{3238}''
Put
\begin{equation*}
L_{n}(s) ~:=~
\prod_{k=\tau_{n-1}^*+1}^{\tau_{n}^*} \exp\left(aX_{k}(s-\big(S_{k-1}-S_{\tau_{n-1}^*})\big)\right)
\end{equation*}
for $n\in\N$ and $s\in\R$ which are i.i.d.\;with $L_{1}(s)=L(s)$ as defined in the proof of Theorem\;\ref{Thm:shot-noise_positive}. If $\E e^{aZ(t)}<\infty$, then
\begin{align}
e^{aZ(t)}-1\ &=~ \sum_{n\ge 1}\left(L_{n}(t-S_{\tau_{n-1}^*})-1\right)\prod_{k\ge n+1}L_{k}(t-S_{\tau_{k-1}^*})	\nonumber	\\
&\geq\ \sum_{n\ge 1}\left(L_{n}(t-S_{\tau_{n-1}^*})-1\right).	\label{eq:lower bound2}
\end{align}
Taking expectations on both sides of this inequality gives $r^{>}(t)<\infty$.

\vspace{.2cm}
``\eqref{3239}$\Rightarrow$\eqref{3236}'' If $r^{>}(t)$ for some $t \in \R$, then also $l(t)<\infty$ and, therefore, $r^{>}(t_{0})<1$ and $l(t_{0})-1<1$ for some $t_{0}\leq t$. Since
\begin{equation*}
e^{aZ_{\tau_{n}^*}(s)} ~\leq~ \prod_{k=1}^{n}L_{k}(s),
\end{equation*}
we infer
\begin{equation*}
b_{n}	:=	\E e^{aZ_{\tau_{n}^*}(t_{0})} \leq (\E L(t_{0}))^{n} = l(t_{0})^{n} < \infty
\end{equation*}
for any $n\in\N$. Putting
\begin{equation*}
c_{n}	:=	\E \sum_{k=1}^{n}\left(L_{k}(t-S_{\tau_{k-1}^*})-1\right)
\end{equation*}
we have $\sup_{n\ge 1}c_{n}=r^{>}(t_{0})<1$ and thus find by a similar estimation as in the proof of Theorem\;\ref{Thm:shot-noise_positive} for nonnegative $\xi$ that $b_{n}\leq 1+c_{n}b_{n-1}$ and thus $b_{n} \leq (1-r^{>}(t_{0}))^{-1}$ for all $n\in\N$. Hence, $\E e^{aZ(t_{0})}<\infty$, for $Z_{\tau_{n}^*}(t_{0})\uparrow Z(t_{0})$.
\end{proof}

\subsection{Finiteness of Power Moments of Shot-Noise Processes}	\label{subsec:finitieness_power_shot-noise}

Turning to power moments, we consider the case $\xi\ge 0$ a.s.\ only.

\begin{Theorem}	\label{Thm:power_shot-noise_positive}
Let $\xi \geq 0$ a.s. Then for any $p \geq 1$ and $t\in\R$, the following assertions are equivalent:
\begin{align}
\E Z(t)^{p} ~&<~ \infty.	\label{eq:EZ(t)^p<infty}	\\
s_{q}(t) ~:=~	\int \E X(t-y)^{q} \, \mathbb{U}({\rm d}y) ~&<~  \infty	\quad	\text{for all } q \in [1,p];	\label{eq:s_q(t)<infty}
\end{align}
\end{Theorem}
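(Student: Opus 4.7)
The easy direction $\eqref{eq:EZ(t)^p<infty}\Rightarrow\eqref{eq:s_q(t)<infty}$ is immediate: since $p \geq 1$ and all summands are nonnegative, $(\sum_n a_n)^p \geq \sum_n a_n^p$ gives $Z(t)^p \geq \sum_n X_n(t-S_{n-1})^p$, hence $s_p(t) \leq \E Z(t)^p$; for $q \in [1,p)$, Lyapunov's inequality yields $\E Z(t)^q \leq (\E Z(t)^p)^{q/p}$, and the same pointwise comparison produces $s_q(t) \leq \E Z(t)^q < \infty$.

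For the converse, note that $x^q \leq x + x^p$ for $x \geq 0$, $q \in [1,p]$, so the hypothesis is equivalent to $s_1(t) < \infty$ together with $s_p(t) < \infty$. I plan to mimic the contraction argument of Theorem\;\ref{Thm:shot-noise_positive}. Since $\xi \geq 0$ a.s., $S_{n-1} = 0$ for all $n \leq \tau^*$, so with $\Lambda(t) := \sum_{n=1}^{\tau^*} X_n(t-S_{n-1}) = \sum_{n=1}^{\tau^*} X_n(t)$ one has the functional identity $Z(t) = \Lambda(t) + \widetilde{Z}(t-S_{\tau^*})$, where $\widetilde{Z}$ is a copy of $Z$, independent of $(\Lambda(t), S_{\tau^*})$. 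Setting $\alpha := \Prob\{\xi = 0\} < 1$ so that $\tau^*$ is geometric with parameter $1-\alpha$, independence of $X_k$ from $\{\tau^* \geq k\} = \{\xi_1 = \cdots = \xi_{k-1} = 0\}$ and Minkowski's inequality give
\[
\|\Lambda(t)\|_p \leq \sum_{k \geq 1}(\E X(t)^p)^{1/p}\,\alpha^{(k-1)/p} = \frac{(\E X(t)^p)^{1/p}}{1-\alpha^{1/p}} < \infty,
\]
since $\mathbb{U}(\{0\}) \geq 1$ forces $\E X(t)^p \leq s_p(t) < \infty$. Picking $\varepsilon > 0$ so small that $\beta := 2^{p-1}\Prob\{S_{\tau^*} \leq \varepsilon\} < 1$ and writing $A := \{S_{\tau^*} \leq \varepsilon\}$, the bound $\widetilde{Z}(t-S_{\tau^*}) \leq \widetilde{Z}(t)\1_A + \widetilde{Z}(t-\varepsilon)\1_{A^c}$ combined with $(a+b)^p \leq 2^{p-1}(a^p+b^p)$ and the disjointness of $A, A^c$, applied to truncated partial sums $Z_N(t) \uparrow Z(t)$ (which are a priori in $L^p$ because $\E X(t)^p < \infty$), yields
\[
\E Z_N(t)^p \leq C_0\bigl(\E \Lambda(t)^p + \E Z_N(t-\varepsilon)^p\bigr), \qquad C_0 := \tfrac{2^{p-1}}{1-\beta}.
\]

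The main obstacle is closing the iteration: the factor $C_0$ typically exceeds $1$ when $p > 1$, so an $n$-fold iteration produces $\sum_k C_0^{k+1} \E \Lambda(t-k\varepsilon)^p$ that need not converge from $s_1(t), s_p(t) < \infty$ alone. I would circumvent this by first reducing to the case $\Prob\{\xi > 0\} = 1$: grouping the summands over the blocks $[\tau_{n-1}^* + 1, \tau_n^*]$ between consecutive strict ascending ladder epochs re-expresses $Z(t) = \sum_{n \geq 1} \Lambda_n(t - S_{\tau_{n-1}^*})$ as a shot-noise process over the random walk $(S_{\tau_n^*})_{n \geq 0}$ with strictly positive increments, and since $\mathbb{U} = (\E \tau^*) \mathbb{U}^{>}$ in the case $\xi \geq 0$ a.s.\ (the degenerate form of \eqref{eq:renewal measure - minimum} with $M^* = 0$), finiteness of $s_1(t), s_p(t)$ for the original shot-noise transfers to the analogous quantities in the reduced setup up to the constant $\E \tau^*$. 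In the reduced problem $\tau^* = 1$ a.s., $\Lambda = X_1$, and the distributional identity $Z(t) \stackrel{d}{=} X_1(t) + \widetilde{Z}(t - \xi_1)$ iterates against $\mathbb{U}^{>}$ to produce a convergent bound on $g(t) := \E Z(t)^p$ in terms of $s_1(t)$ and $s_p(t)$; monotone convergence in $N$ then finishes the argument.
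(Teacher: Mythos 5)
You have the easy direction right, and you correctly diagnose the problem with the naive contraction argument: the constant $C_0 = 2^{p-1}/(1-\beta) > 1$ compounds at each iteration step, so iterating $\E Z_N(t)^p \leq C_0(\E\Lambda(t)^p + \E Z_N(t-\varepsilon)^p)$ produces a divergent geometric factor. This is exactly where the analogy with Theorem~\ref{Thm:shot-noise_positive} breaks down: for exponential moments, $e^{aZ}$ turns the additive block decomposition into a \emph{multiplicative} one, so the recursion is genuinely contractive; for power moments it is not.

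The gap is in your proposed fix. Reducing to $\Prob\{\xi>0\}=1$ by grouping over ladder blocks does nothing to remove the compounding constant: in the reduced problem the identity $Z(t)\stackrel{d}{=}X_1(t)+\widetilde Z(t-\xi_1)$ still forces you to expand a $p$-th power of a sum, and if you again invoke $(a+b)^p\le 2^{p-1}(a^p+b^p)$ you get $\E Z(t)^p \le 2^{p-1}\E X(t)^p + 2^{p-1}\E Z(t-\xi_1)^p$, whose $n$-fold iteration contributes $2^{n(p-1)}$ --- the same blowup. Replacing the $2^{p-1}$-inequality by Minkowski fares no better: one obtains $\|Z_N(t)\|_p \le \sum_{n<N}\big(\int\E X(t-y)^p\,\Prob\{S_n\in{\rm d}y\}\big)^{1/p}$, and finiteness of $s_p(t)=\sum_n\int\E X(t-y)^p\,\Prob\{S_n\in{\rm d}y\}$ does not make the series of $p$-th roots converge. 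The phrase ``iterates against $\mathbb{U}^>$ to produce a convergent bound'' is therefore not an argument but an assertion, and the natural ways of filling it in fail for exactly the reason you yourself identified one paragraph earlier.

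What is actually needed is a sharper splitting of $(x+y)^p$ in which the leading terms $x^p$ and $y^p$ appear with coefficient $1$ (so nothing compounds), and the cross terms involve strictly lower powers that can be controlled separately. The paper does this with the estimate
\begin{equation*}
(x+y)^p ~\le~ x^p + y^p + p2^{p-1}\big(xy^{p-1}+x^ny^\delta\big),\qquad p=n+\delta,\ n\in\N_0,\ \delta\in(0,1],
\end{equation*}
(Lemma~\ref{Lem:Gut's_estimate_modified}), applied directly to $Z(t)=X_1(t)+Z_1(t-\xi_1)$ in the original $\xi\ge0$ setting --- no ladder-block reduction is needed. Iterating this and using that $Z_k(t-S_k)\to0$ a.s.\ gives
\begin{equation*}
\E Z(t)^p ~\le~ s_p(t) + p2^{p-1}\big(s_1(t)\,\E Z(t)^{p-1}+s_n(t)\,\E Z(t)^\delta\big),
\end{equation*}
where the quantities $\E Z(t)^{p-1}$ and $\E Z(t)^\delta$ are finite by an induction on the integer part $n=\lceil p\rceil-1$ (the base case $p=1$ being $\E Z(t)=s_1(t)$). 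That induction is the missing device in your outline; your reduction of the hypothesis to $s_1(t),s_p(t)<\infty$ via $x^q\le x+x^p$ is correct but discards precisely the intermediate $s_q(t)$'s that the induction exploits.
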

\begin{proof}
``\eqref{eq:EZ(t)^p<infty}$\Rightarrow$\eqref{eq:s_q(t)<infty}'':
Let $\E Z(t)^p < \infty$ and $q \in [1,p]$.
Using the superadditivity of the function $x \mapsto x^q$ for $x \geq 0$, we then infer
\begin{equation*}
\infty  ~>~ \E Z(t)^q ~\geq~ \E \sum_{k \geq 1} X_k(t-S_{k-1})^{q} ~=~ \int_0^\infty \E X(t-y)^{q} \, \mathbb{U}({\rm d}y)
\end{equation*}
which is the desired conclusion.

\vspace{.2cm}
``\eqref{eq:s_q(t)<infty}$\Rightarrow$\eqref{eq:EZ(t)^p<infty}'':
To prove this implication, we write $p=n+\delta$ with $n \in \N_0$, $\delta \in (0,1]$ and use induction on $n$.
When $n=0$, then necessarily $\delta = 1$, \textit{i.e.}, $p=1$. Then there is nothing to verify, for
\begin{equation*}
\E Z(t) ~=~ \int_0^\infty \E X(t-y) \mathbb{U}({\rm d}y) ~=~ s_1(t) ~<~ \infty.
\end{equation*}
In the induction step, we assume that the asserted implication holds for $p=n$
and conclude that it then also holds for $p=n+\delta$ for all $\delta \in (0,1]$.
To this end, assume that $p = n+\delta$ for some $n \in \N$ and $\delta \in (0,1]$ and that $s_q(t) < \infty$ for all $q \in [1,p]$.
By induction hypothesis, $\E Z(t)^n < \infty$.
For $k \in \N$ and $t \in \R$, define
\begin{equation*}
Z_{k}(t) ~:=~ \sum_{j \geq k+1} X_{j}(t-(S_{j-1}-S_{k})).
\end{equation*}
Then $Z_{k}(\cdot)$ is a copy of $Z_{0}(\cdot):=Z(\cdot)$ and also independent of $\mathcal{F}_{k} := \sigma((X_j,\xi_j): j=1,\ldots,k)$.
$Z_{k}$ satisfies $Z_{k}(t)=X_{k+1}(t)+Z_{k+1}(t-\xi_{k+1})$ for all $t \in \R$.
Using \eqref{eq:Gut's_estimate_modified}, we get
\begin{eqnarray*}
Z(t)^{p}
& = &
(X_1(t) + Z_1(t\!-\!\xi_1))^p	\\
& \leq &
X_1(t)^p+Z_1(t\!-\!\xi_1)^p		\\
& & + p2^{p-1}(X_1(t) Z_1(t\!-\!\xi_1)^{p-1} + X_1(t)^nZ_1(t\!-\!\xi_1)^{\delta}).
\end{eqnarray*}
Iterating this inequality and using
\begin{equation}	\label{eq:Z_k(t-S_k)->0}
Z_{k}(t-S_{k}) ~=~ \sum_{j \geq k+1} X_j(t-S_{j-1}) ~\to~ 0 \quad	\text{a.s.\ as } k \to \infty
\end{equation}
we obtain the following upper bound for $Z(t)^p$:
\begin{eqnarray*}
Z(t)^{p}
& \leq &
\sum_{j \geq 1} X_j(t-S_{j-1})^p \\
& & + p2^{p-1}\bigg(\sum_{j \geq 1} X_j(t\!-\!S_{j-1}) Z_j(t\!-\!S_j)^{p-1}	\\
& & \hphantom{ + p2^{p-1}\bigg(} + \sum_{j \geq 1} X_j(t\!-\!S_{j-1})^n Z_j(t\!-\!S_j)^{\delta}\bigg).
\end{eqnarray*}
$\E Z(t)^n < \infty$ implies that $\E Z(t)^q$ is finite for $0 < q \leq n$. Using this and the monotonicity of $Z_j$, we conclude
\begin{equation*}
\E Z(t)^{p}
~\leq~
s_p(t) + p2^{p-1}(s_1(t) \E Z(t)^{p-1} + s_n(t) \E Z(t)^{\delta})
~<~
\infty.
\end{equation*}
\end{proof}

\section{Proofs of the Main Results}	\label{sec:Proofs}

\subsection{Proofs of the Results on a.s.\ Finiteness of $\tau(x)$, $N(x)$ and $\rho(x)$}	\label{subsec:proofs_finiteness}

\begin{proof}[Proof of Theorem\;\ref{Thm:global}]
By Theorem\;2.1 in \cite{GolMal:00}, \eqref{eq:drift_-infty} is necessary and sufficient for $\lim_{n \to \infty} T_n=-\infty$ a.s.\ and thus, by symmetry, \eqref{eq:drift_+infty} is equivalent to
$\lim_{n \to \infty} T_n=\infty$ a.s.
On p.\;1215 of \cite{GolMal:00} it is shown that $\limsup_{n \to \infty}T_n < \infty$ a.s.\ entails $\lim_{n \to \infty} T_n=-\infty$ a.s.
This proves the remaining assertions.
\end{proof}

One half of the proof of Theorem\;\ref{Thm:finiteness_tau} is settled by the following lemma.

\begin{Lemma}   \label{Lem:tau_leq_geometric}
Let $x \in \R$, $\Prob\{\xi<0, \eta \leq x\} = 0$ and $p:=\Prob\{\eta\leq x\}<1$. Then $\Prob\{\tau(x) > n\} \leq p^n$ for $n\in\N$.
If $p=1$, then $\limsup_{n\to\infty}T_{n}=\infty$ a.s.
\end{Lemma}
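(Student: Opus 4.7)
The plan is to treat the two assertions of the lemma separately. For the geometric bound $\Prob\{\tau(x) > n\} \leq p^n$, I would prove by induction on $k \in \{1,\dots,n\}$ that, almost surely on $\{\tau(x) > n\}$, both $S_{k-1} \geq 0$ and $\eta_k \leq x$ hold. The base case $k=1$ is immediate since $S_0 = 0$ and $T_1 = \eta_1 \leq x$ on $\{\tau(x) > n\}$. For the inductive step, the induction hypothesis $\eta_k \leq x$ combined with the assumption $\Prob\{\xi<0,\eta\leq x\}=0$ forces $\xi_k \geq 0$ a.s., so $S_k \geq S_{k-1} \geq 0$; then $\eta_{k+1} = T_{k+1}-S_k \leq x - S_k \leq x$, closing the induction.

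Once this is done, $\{\tau(x) > n\}$ is contained, up to a null set, in $\bigcap_{k=1}^{n}\{\eta_k \leq x\}$, and the independence of the $\eta_k$'s yields
\begin{equation*}
\Prob\{\tau(x) > n\} \leq \prod_{k=1}^{n}\Prob\{\eta_k \leq x\} = p^n,
\end{equation*}
as required.

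For the second assertion, I would first observe that $p = 1$ combined with $\Prob\{\xi<0,\eta\leq x\}=0$ gives $\Prob\{\xi<0\}=0$, so $\xi \geq 0$ a.s. Together with the standing assumption $\Prob\{\xi=0\}<1$ this yields $\Prob\{\xi>0\}>0$ and hence $S_n \to \infty$ a.s. To upgrade this to $\limsup_{n} T_n = \infty$ a.s., I would pick $M>0$ with $\Prob\{\eta > -M\}>0$ (which exists because $\eta$ is a.s.\ real-valued) and apply the second Borel--Cantelli lemma to the independent events $\{\eta_n > -M\}$, so that $\eta_n > -M$ infinitely often a.s. Along this random subsequence $T_n > S_{n-1} - M \to \infty$, proving the claim.

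The only subtle point is the bookkeeping of the null sets in the induction: each step is a.s., but since only finitely many (namely $n$) exceptional sets arise, their union remains null. I anticipate no further obstacle; for the second half the mild twist is that one cannot simply assume $\eta \geq 0$, which is why the Borel--Cantelli argument on $\{\eta_n > -M\}$ is needed.
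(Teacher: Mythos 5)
Your proof is correct. For the geometric bound, you show by induction that on $\{\tau(x)>n\}$ one has $\eta_k\le x$ for $k=1,\dots,n$; the paper reaches the same inclusion by introducing $\nu:=\inf\{n:\eta_n>x\}$ and showing $\tau(x)\le\nu$ a.s.\ (on $\{\nu=n\}$ one has $\xi_1,\dots,\xi_{n-1}\ge 0$ a.s., hence $T_n\ge\eta_n>x$). These are the same idea with slightly different bookkeeping, and your handling of the countably many null sets is fine. Where you genuinely diverge is the case $p=1$: the paper observes $\xi\ge 0$ a.s., hence $S_n\to\infty$, and then simply invokes Theorem~\ref{Thm:global} to conclude $\limsup_n T_n=\infty$ a.s.; you instead give a self-contained second Borel--Cantelli argument on the events $\{\eta_n>-M\}$ combined with $S_{n-1}\to\infty$. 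Your route is more elementary and avoids the appeal to the trichotomy theorem, at the cost of being slightly longer; the paper's route is shorter but relies on a nontrivial result proved elsewhere (and ultimately on \cite{GolMal:00}). Both are valid.
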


\begin{proof}
Let $x \in \R$ and $\Prob\{\xi<0, \eta \leq x\} = 0$. Then $p=1$ entails $\xi \ge 0$ a.s., thus $\lim_{n\to\infty}S_n = \infty$ a.s.\ (recalling our standing assumption) and thus, by Theorem\;\ref{Thm:global}, $\limsup_{n \to \infty} T_n = \infty$ a.s.

Now assume that $p<1$. Then $\nu := \inf\{n \in \N: \eta_n > x\}$ has a geometric distribution, namely $\Prob\{\nu > n\} = p^n$ for $n\in\N$. By assumption, $\xi_k \geq 0$ a.s.\ for $k=1,\ldots,n-1$ on $\{\nu=n\}$ whence $T_n = \xi_1 +\ldots + \xi_{n-1} + \eta_n \geq \eta_n > x$ a.s.\ on $\{\nu = n\}$ and therefore
\begin{equation*}
\Prob\{\tau(x) > n\} = \Prob\{T_k \leq x \text{ for } k=1,\ldots,n\}
\leq \Prob\{\nu > n\} = p^n.
\end{equation*}
for any $n\in\N$.
\end{proof}

\subsection{Proofs of the Results on Finiteness of Exponential Moments of $\tau(x)$, $N(x)$ and $\rho(x)$}	\label{subsec:proofs_exponential}

\begin{proof}[Proof of Theorem\;\ref{Thm:finiteness_tau}]
In view of the previous lemma it remains to argue that, given a negatively divergent PRW $(T_{n})_{n \geq 1}$, the a.s.\ finiteness of $\tau(x)$ for some $x\in\R$ implies $\Prob\{\xi<0,\eta\leq x\}=0$ which will be done by contraposition: \\
If $\Prob\{\xi<0,\eta \leq x\}>0$, we can fix $\varepsilon>0$ such that $\Prob\{\xi \leq -\varepsilon, \eta \leq x\}>0$. By negative divergence, $\sup_{n \geq 1} T_n < \infty$ a.s.\ so that we can further pick $y \in \R$ such that $\Prob\{\sup_{n \geq 1} T_n \leq y\} > 0$. Define $m := \inf\{k \in \N_0: k \varepsilon \geq y-x\}$. Then
\begin{align*}
\Prob\{\tau(x) = \infty\} &=\Prob\{\sup_{n \geq 1} T_n \leq x\}    \\
&\geq \Prob \bigg \{\max_{1 \leq k \leq m}\xi_k \leq -\varepsilon,\,\max_{1 \leq k \leq m} \eta_k \leq x, \, \sup_{j>m} T_j - S_m \leq y\bigg\}    \\
&= \Prob\{\xi \leq -\varepsilon, \eta \leq x\}^m\, \Prob\{\sup_{n \geq 1} T_n \leq y\} > 0
\end{align*}
yields the desired conclusion.
\end{proof}

Recall that $\tau^*(x)$ denotes the counterpart of $\tau(x)$ for the ordinary random walk $(S_{n})_{n \geq 0}$ and note also that, for any $a>0$, $\E e^{a\tau^*(x)}<\infty$ for all $x\in\R$ is equivalent to $\E e^{a\tau^*}<\infty$. Put
\begin{equation*}
\nu(x) := \inf\{n\ge 1:\eta_{n}>x\}
\end{equation*}
for $x \in \R$. We make the observation that $\tau^* \wedge \nu(x) \leq \tau(x)$, for
\begin{itemize}
\item[] either $S_{\tau(x)-1}>0$ $(\Rightarrow\tau^*<\tau(x))$,
\item[] or $S_{\tau(x)-1}\leq 0$ and $\eta_{\tau(x)}>x$ $(\Rightarrow\nu(x)\leq \tau(x))$.
\end{itemize}

\begin{Lemma}\label{Lemma:solidarity mgf}
Let $a>0$ and suppose that $\Prob\{\xi<0,\eta\leq x\}>0$ as well as $\E e^{a\tau(x)}<\infty$ for some fixed $x\in\R$. Then $\E e^{a\tau(y)}<\infty$ for all $y\in\R$.
\end{Lemma}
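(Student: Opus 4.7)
The case $y \leq x$ is immediate from $\tau(y) \leq \tau(x)$. For $y > x$ the plan is a one-step bootstrap followed by iteration: I will prove that $\E e^{a\tau(x+\delta)} < \infty$ for a well-chosen $\delta > 0$, and then apply this bootstrap $k$ times to cover any $y$ via $\tau(y) \leq \tau(x+k\delta)$ for $k\delta \geq y - x$. Since $\{\xi < -\delta,\, \eta \leq x\} \uparrow \{\xi < 0,\, \eta \leq x\}$ as $\delta \downarrow 0$ and the latter event has positive probability by hypothesis, I can fix $\delta > 0$ with $q' := \Prob\{\xi < -\delta,\, \eta \leq x\} > 0$. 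The same $\delta$ serves every iteration step, because $\Prob\{\xi < -\delta,\, \eta \leq x + j\delta\} \geq q'$ for every $j \geq 0$.

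The heart of the argument is a one-step tail comparison. Writing $M_n := \max_{1 \leq k \leq n} T_k$ (so that $\E e^{a\tau(z)} < \infty$ is equivalent to $\sum_{n \geq 1} e^{an} \Prob\{M_n \leq z\} < \infty$), the goal is to prove
\begin{equation*}
\Prob\{M_n \leq x\} ~\geq~ q'\,\Prob\{M_{n-1} \leq x+\delta\},	\quad	n \geq 1.
\end{equation*}
For this I will use the decomposition $T_k = \xi_1 + T'_{k-1}$ valid for $k \geq 2$, where $T'_j := \xi_2 + \ldots + \xi_j + \eta_{j+1}$ is a fresh copy of the PRW built from $(\xi_j,\eta_j)_{j \geq 2}$ and is hence independent of $(\xi_1,\eta_1)$. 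On the event $E := \{\xi_1 < -\delta,\, \eta_1 \leq x\}$ one has $T_1 = \eta_1 \leq x$ automatically, while for $k \geq 2$ the condition $T_k \leq x$ is equivalent to $T'_{k-1} \leq x - \xi_1$, with $x - \xi_1 > x + \delta$. Consequently $\{M_n \leq x\} \cap E \supseteq E \cap \{M'_{n-1} \leq x + \delta\}$ with $M'_m := \max_{1 \leq j \leq m} T'_j$, and the independence of $E$ and $M'_{n-1}$ together with $M'_{n-1} \stackrel{d}{=} M_{n-1}$ finishes the comparison.

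Multiplying the displayed inequality by $e^{an}$ and summing over $n \geq 1$ then yields $\sum_{m \geq 1} e^{am} \Prob\{M_m \leq x+\delta\} \leq (q' e^a)^{-1} \sum_{n \geq 1} e^{an} \Prob\{M_n \leq x\} < \infty$, hence $\E e^{a\tau(x+\delta)} < \infty$. I anticipate no serious obstacle; the only step demanding a touch of care is the set inclusion above, which simply requires tracking the downward push of magnitude exceeding $\delta$ contributed by $\xi_1$ in the decomposition of $T_k$.
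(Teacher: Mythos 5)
Your proposal is correct and takes essentially the same approach as the paper: fix $\delta>0$ with $\Prob\{\xi\leq -\delta,\eta\leq x\}>0$, condition on the first step landing in that event to bootstrap $\E e^{a\tau(x+\delta)}<\infty$ from $\E e^{a\tau(x)}<\infty$, and iterate. The paper states the key inequality $\E e^{a\tau(x)}\geq \E e^{a\tau(x+\varepsilon)}\,\Prob\{\xi\leq -\varepsilon,\eta\leq x\}$ without justification; your derivation via the running maxima $M_n$ and the shifted copy $(T'_j)_{j\geq 1}$ simply makes explicit what that inequality rests on.
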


\begin{proof}
By monotonicity, $\E e^{a\tau(y)}<\infty$ for all $y \leq x$.
Now fix some $\varepsilon>0$ such that $\Prob\{\xi\le-\varepsilon,\eta\leq x\}>0$.
Then
\begin{align*}
\E e^{a\tau(x)} ~\geq~ \E e^{a\tau(x+\varepsilon)} \,\Prob\{\xi \leq -\varepsilon, \eta \leq x\}
\end{align*}
implies $\E e^{a\tau(x+\varepsilon)}<\infty$. By repeating this argument with $x+\varepsilon,x+2\varepsilon,\ldots$
and noting that $\Prob\{\xi\le-\varepsilon,\eta\leq x+n\varepsilon\}>0$, we infer $\E e^{a\tau(x+n\varepsilon)}<\infty$ for all $n\in\N$.
\end{proof}

\begin{proof}[Proof of Theorem\;\ref{Thm:exponential_tau}]
(a) If $\Prob\{\xi<0,\eta\leq x\}=0$, then $\tau(y)\leq \nu(y)$ for all $y\leq x$ and therefore $g(y):=\E e^{a\tau(y)}<\infty$ when $e^{a}\Prob\{\eta\leq y\}<1$, because in this case
\begin{align*}
g(y) ~\leq~ \E e^{a\nu(y)} ~=~ \Prob\{\eta>y\} \sum_{n \geq 1} e^{an}\,\Prob\{\eta\leq y\}^{n-1} ~<~ \infty.
\end{align*}
Turning to the asserted equivalence, note first that $e^{a}\Prob\{\xi=0,\eta\leq x\}\ge 1$ implies $\E e^{a\tau(x)}=\infty$ because
\begin{align*}
\sum_{n\ge 1}e^{an}\,\Prob\left\{\max_{1\leq k\leq n}T_{k}\leq x\right\}\ &\ge\ \sum_{n\ge 1}e^{an}\,\Prob\left\{\max_{1\leq k\leq n}T_{k}\leq x,\,S_{n-1}=0\right\}\\
&=~ \Prob\{\eta\leq x\}\sum_{n\ge 1}e^{an}\,\Prob\{\xi=0,\eta\leq x\}^{n-1}~=~ \infty.
\end{align*}
For the converse implication, assume $e^{a} \Prob\{\xi=0,\eta \leq x\} < 1$.
For $n \in \N$, define $\hat{\xi}_n := \xi_n \1_{\{\eta_n \leq x\}} + \1_{\{\eta_n > x\}}$, $n \in \N$. Observe that $\hat{\xi}_n \geq 0$ a.s.\ since $\Prob\{\hat{\xi}_n < 0\} = \Prob\{\xi_n < 0, \eta_n \leq x\} = 0$.
Let $\hat{S}_n := \hat{\xi}_1+\ldots+\hat{\xi}_n$, and $\hat{T}_n := \hat{S}_{n-1}+\eta_n$, $n \in \N$, \textit{i.e.}, $(\hat{T}_n)_{n \geq 1}$ is the PRW based on the sequence $(\hat{\xi}_1,\eta_1),(\hat{\xi}_2,\eta_2),\ldots$. By construction, $\hat{T}_n = T_n$ for all $n \leq \nu(x)$. On the other hand, $\tau(x) \leq \nu(x)$ a.s.\ due to the assumption $\Prob\{\xi<0,\eta \leq x\} = 0$. Consequently, $\hat{\tau}(x) := \inf\{n \geq 1: \hat{T}_n > x\} = \tau(x)$ a.s.
In particular, $\E e^{a \tau(x)}$ is finite iff $\E e^{a\hat{\tau}(x)}$ is finite. To see that the latter is finite in the given situation, let $\hat{\tau}^*(y) := \inf\{n \geq 1: \hat{S}_n > y\}$ for $y \geq 0$ and observe that $\E e^{a \hat{\tau}^*(y)} < \infty$ for all $ y \geq 0$ by Proposition\;1.1 in \cite{IksMei:10b}
since $e^a \Prob\{\hat{\xi}_1 = 0\} = e^a \Prob\{\xi = 0, \eta \leq x\} < 1$.
Pick $u \in \R$ such that $\Prob\{\eta \leq -u\} < e^{-a}$ and define $\nu' := \inf\{n \geq 1: \eta_{\hat{\tau}^*(u+x)+n} > -u\}$.
Then
\begin{equation*}
\E e^{a \nu'} ~=~ \sum_{n \geq 1} e^{an} \Prob\{\nu'=n\}
~=~ \Prob\{\eta > -u\} \sum_{n \geq 1} e^{an} \Prob\{\eta \leq -u\}^{n-1}	~<~	\infty.
\end{equation*}
Since $\hat{S}_n$ is increasing in $n$, we have
$\tau'(x) \leq \hat{\tau}^*(x+u) + \nu'$. Therefore, using the independence of $\hat{\tau}^*(x+u)$ and $\nu'$, we infer
\begin{equation*}
\E e^{a\tau^*(x)}
~\leq~	\E e^{a (\hat{\tau}^*(x+u) + \nu')}
~=~	\E e^{a \hat{\tau}^*(x+u)}  \E e^{a \nu'}
~<~	\infty.
\end{equation*}

\vspace{.2cm}
(b) Since Lemma\;\ref{Lemma:solidarity mgf} gives the equivalence of \eqref{eq:Ee^atau<infty} and \eqref{eq:Ee^atau(y)<infty} and the equivalence of \eqref{eq:E^atau*<infty} and \eqref{eq:-loginf E^-txi>=a} has been shown as Theorem\;1.2 in \cite{IksMei:10b}, we are left with a proof of ``\eqref{eq:Ee^atau(y)<infty}$\Rightarrow$\eqref{eq:-loginf E^-txi>=a}'' and ``\eqref{eq:E^atau*<infty}$\Rightarrow$\eqref{eq:Ee^atau<infty}''.

\vspace{.2cm}
``\eqref{eq:Ee^atau(y)<infty}$\Rightarrow$\eqref{eq:-loginf E^-txi>=a}''
Suppose $\E e^{a\tau(y)}<\infty$ for all $y\in\R$ and recall that $\tau^*\wedge\nu(y)\leq \tau(y)$. Then it follows that
\begin{equation*}
\E\left(e^{a\tau^*}\1_{\{\nu(y)>\tau^*\}}\right) < \infty
\end{equation*}
for all $y\in\R$. Let $\tau_{y}^*$ denote the first strictly ascending ladder epoch of a standard random walk with increment distribution $\Prob\{\xi\in\cdot|\eta\leq y\}$ for any $y$ with $e^{-\theta(y)}:=\Prob\{\eta\leq y\}>0$. Then
\begin{equation*} \Prob\{\tau_{y}^*=n\}~=~ \Prob\{\tau^*=n|\nu(y)>n\} \end{equation*}
for each $n\in\N$ and therefore
\begin{align*}
\infty ~>~ \E\left(e^{a\tau^*}\1_{\{\nu(y)>\tau^*\}}\right) ~&=~ \sum_{n\ge 1}e^{an}\,\Prob\{\tau^*=n,\nu(y)>n\}	\\
&=~ \sum_{n\ge 1}e^{(a-\theta(y))n}\,\Prob\{\tau_{y}^*=n\}	\\
&=~ \E e^{(a-\theta(y))\tau_{y}^*}.
\end{align*}
By invoking Theorem\;1.2 in \cite{IksMei:10b}, we infer
\begin{equation*}
a-\theta(y) ~\leq~ -\log \inf_{t \geq 0} \E(e^{-t\xi}|\eta\leq y)~=~ -\log \inf_{t \geq 0} \E e^{-t\xi} \1_{\{\eta\leq y\}}-\theta(y)
\end{equation*}
and hence $a \leq  -\log \inf_{t \geq 0} \E e^{-t\xi} \1_{\{\eta \leq y\}}$ for all sufficiently large $y$.
It remains to show that $\lim_{y \to \infty} \inf_{t \geq 0} \E e^{-t\xi} \1_{\{\eta \leq y\}} \geq  \inf_{t \geq 0} \E e^{-t\xi}$.
To this end,
put $\varphi_y(t) := \E e^{-t\xi} \1_{\{\eta\leq y\}}$ and notice that, for some $y_0 > 0$, $\Prob\{\xi<0, \eta \leq y_0\} > 0$.
For $y \geq y_0$, $\varphi_y$ assumes its infimum at some unique $0 \leq t_y < \infty$, say.
Let $t_{\infty}$ denote the unique minimizer of $\varphi(t) = \E e^{-t\xi}$ on $[0,\infty)$.
Then, for any given $t > t_{\infty}$, $\varphi(t) > \varphi(t_{\infty})$. (Here, it may happen that $\varphi(t) = \infty$.)
Since $\varphi_y(t) \to \varphi(t)$ and $\varphi_y(t_{\infty}) \to \varphi(t_{\infty})$ as $y \to \infty$,
we find $y_1 \geq y_0$ such that $\varphi_y(t) > \varphi_y(t_{\infty})$ for all $y \geq y_1$.
From the convexity of $\varphi_y$ one can then conclude that $t_y < t$.
Since $t>t_{\infty}$ was chosen arbitrarily, we infer that $\limsup_{y \to \infty} t_y \leq t_{\infty}$.
Analogously, one can show that $\liminf_{y \to \infty} t_y \geq t_{\infty}$.
Consequently, $\lim_{y \to \infty} t_y = t_{\infty}$.
Now we decompose $\varphi_y$ into a decreasing function $\varphi_{y,>}$ and an increasing function $\varphi_{y,\leq}$:
\begin{equation*}
\varphi_y(t)	~=~	\E e^{-t\xi}\1_{\{\xi > 0, \eta \leq y\}} + \E e^{-t\xi} \1_{\{\xi \leq 0, \eta \leq y\}}	~=:~	\varphi_{>,y}(t) + \varphi_{\leq,y}(t),	\quad	t \geq 0.
\end{equation*}
Since $t_y \to t_{\infty}$ as $y \to \infty$, for any given $\delta > 0$ and all large enough $y$, $t_y \in [t_{\infty}-\delta,t_{\infty}+\delta]$.
Therefore, using the monotonicity of $\varphi_{y,>}$ and $\varphi_{y,\leq}$, we obtain
\begin{align*}
\varphi_y(t_y)
& \geq~
\varphi_{>,y}(t_{\infty}+\delta) + \varphi_{\leq,y}(t_{\infty}-\delta)	\\
& \to~
\E e^{-(t_{\infty}+\delta) \xi} \1_{\{\xi > 0\}} + \E e^{-(t_{\infty}-\delta) \xi} \1_{\{\xi \leq 0\}}	\quad &	(y \to \infty)		\\
& \to~
\varphi(t_{\infty})	&	(\delta \to 0)
\end{align*}
where we have used the monotone convergence theorem twice.
This implies \eqref{eq:-loginf E^-txi>=a}.

\vspace{.2cm}
``\eqref{eq:E^atau*<infty}$\Rightarrow$\eqref{eq:Ee^atau<infty}''
Suppose that $\E e^{a\tau^*}<\infty$ and consider the renewal sequence of strictly ascending ladder epochs $(\tau_{n}^*)_{n \geq 0}$ associated with $(S_{n})_{n \geq 0}$, thus $\tau_{1}^*=\tau^*$. Pick $s\in\R$ such that
\begin{equation*}
\gamma := \E\left(e^{a\tau^*} \1_{\{\eta_{\tau^*}\leq \,\xi_{\tau^*}+s\}}\right) < 1
\end{equation*}
and then define $\sigma:=\inf\{n\ge 1:\eta_{\tau_{n}^*}>\xi_{\tau_{n}^*}+s\}$,
which has a geometric distribution on $\N$ with parameter $\Prob\{\eta_{\tau^*}>\xi_{\tau^*}+s\}$.
Since $T_{\tau_{\sigma}^*}=S_{\tau_{\sigma}^*}+\eta_{\tau_{\sigma}^*}-\xi_{\tau_{\sigma}^*}>s$,
we infer $\tau(s)\leq \tau_{\sigma}^*$.
Finally, use that $(\tau_{n}^*-\tau_{n-1}^*,\xi_{\tau_{n}^*},\eta_{\tau_{n}^*})$, $n \in \N$ are i.i.d.\;to infer
\begin{align*}
\E e^{a\tau_{\sigma}^*}\ &=~ \E\left(\prod_{k=1}^{\sigma}e^{a(\tau_{k}^*-\tau_{k-1}^*)}\right)	\\
&=~ \sum_{n \geq 0}\gamma^{n}\,\E\left(e^{a\tau^*}\1_{\{\eta_{\tau^*}>\,\xi_{\tau^*}+s\}}\right) ~<~ \infty
\end{align*}
and therefore $\E e^{a\tau(s)}<\infty$.
If $s \geq x$, this also proves $\E e^{a\tau(x)}<\infty$. Otherwise, consider the level 1 ladder epochs
$\tau_{1}^*(1),\tau_{2}^*(1),\ldots$ of $(S_{n})_{n \geq 0}$ and pick $m$ so large that $s+m\ge x$.
Observe that $\tau(x) \leq \tau_{m}^*(1)+\tau'(s)$ where
\begin{equation*}
\tau'(s):=\inf\{n \geq 1: T_{\tau_{m}^*(1)+n}-S_{\tau_{m}^*(1)} > s\}.
\end{equation*}
Then $\tau'(s)$ is a copy of $\tau(s)$ and independent of $(\tau_{k}^*(1))_{1\leq k\leq m}$.
In combination with $\E e^{a\tau^*(1)}<\infty$, this implies
\begin{equation*}
\E e^{a\tau(x)} ~\leq~ \left(\E e^{a\tau^*(1)}\right)^{m}\E e^{a\tau(s)} ~<~ \infty.
\end{equation*}
The proof is complete.
\end{proof}

\begin{proof}[Proof of Theorem\;\ref{Thm:exponential_N}]
(a) Fix any $a>0$ and $x\in\R$. For $y \geq 0$, define
\begin{equation*}
\widehat{\tau}(y) := \inf\{n\ge 1:\xi_{n}>y\}.
\end{equation*}
Consider the renewal shot-noise process $Z(\cdot)$ with generic response function $X(t):=\sum_{k=1}^{\widehat{\tau}(0)}\1_{\{\eta_{k} \leq t\}}$ and generic renewal increment $\xi':=S_{\widehat{\tau}(0)}>0$ having distribution $\Prob\{\xi\in\cdot|\xi>0\}$.
Then it can be checked that $N(x)=Z(x)$ for all $x\in\R$ and therefore, by Theorem\;\ref{Thm:shot-noise_positive} and Remark \ref{Rem:shot-noise_positive},
that $\E e^{aN(x)}<\infty$ iff
\begin{align}
\int_0^\infty &\Bigg(\E \exp \Bigg(a\sum_{k=1}^{\widehat{\tau}(0)}\1_{\{\eta_k\leq x-u\}} \Bigg)-1\Bigg)\ \mathbb{U}'({\rm d}u) ~<~ \infty,	\label{eq:crucial integral}
\end{align}
where $\mathbb{U}'$ denotes the renewal measure associated with $\xi'$ and satisfies
\begin{equation}\label{eq:renewal fct estimate2}
J_{+}(y) \Prob\{\xi>0\} ~\leq~ \mathbb{U}'(y) ~\leq~  2 J_{+}(y) \Prob\{\xi>0\}
\end{equation}
for all $y > 1$, see \textit{e.g.}\ (4.1) in \cite{Erickson:73}. Since
\begin{equation*}
\E e^{a\1_{\{\eta\leq x\}}}\1_{\{\xi=0\}}=e^{a}\,\Prob\{\xi=0,\eta\leq x\}+\Prob\{\xi=0,\eta>x\},
\end{equation*}
we see that \eqref{eq:e^aP(xi=0,eta<=x)+P(xi=0,eta>x)<1} is equivalent to
\begin{equation}	\label{(2.22)}
\E \exp \Bigg(a \sum_{k=1}^{\widehat{\tau}(0)}\1_{\{\eta_k\leq x\}}\Bigg)
~=~ \frac{\E e^{a\1_{\{\eta\leq x\}}}\1_{\{\xi>0\}}}{1-\E e^{a\1_{\{\eta\leq x\}}}\1_{\{\xi=0\}}}
~<~ \infty
\end{equation}
Validity of \eqref{(2.22)} further implies \eqref{eq:crucial integral} because
\begin{align*}
\int_{0}^{\infty}&\Bigg(\E \exp \Bigg(a\sum_{k=1}^{\widehat{\tau}(0)}\1_{\{\eta_k\leq x-u\}} \Bigg)-1\Bigg)\ \mathbb{U}'({\rm d}u)	\\
&=~ 	\int_{0}^{\infty}\frac{\E e^{a\1_{\{\eta \leq x-u\}}}-1}{1-\E e^{a\1_{\{\eta\leq x-u\}}}\1_{\{\xi=0\}}}\ \mathbb{U}'({\rm d}u)	\\
&=~ 	\int_{0}^{\infty}\frac{(e^{a}-1)\,\Prob\{\eta \leq x-u\}}{1-\E e^{a\1_{\{\eta\leq x-u\}}}\1_{\{\xi=0\}}}\ \mathbb{U}'({\rm d}u)	\\
&\leq~ \frac{(e^{a}-1)}{1-\E e^{a\1_{\{\eta \leq x\}}}\1_{\{\xi=0\}}} \int_{0}^{\infty}\Prob\{(\eta-x)^{-}\ge u\}\ \mathbb{U}'({\rm d}u)	\\
&\leq~	\frac{2 (e^{a}-1) \Prob\{\xi>0\}}{1-\E e^{a\1_{\{\eta \leq x\}}}\1_{\{\xi=0\}}}\,\E J_{+}((\eta-x)^{-})
\end{align*}
where \eqref{eq:renewal fct estimate2} has been utilized for the last line and $\E J_{+}((\eta-x)^{-}) < \infty$ by \eqref{eq:EJ+eta-<infty}.

Since, conversely, \eqref{eq:e^aP(xi=0,eta<=x)+P(xi=0,eta>x)<1} follows directly from \eqref{eq:crucial integral},
we have thus proved the equivalence of \eqref{eq:Ee^aN<infty} and \eqref{eq:e^aP(xi=0,eta<=x)+P(xi=0,eta>x)<1}. To check the remaining assertions is easy and therefore omitted.

\vspace{.2cm}
(c)
First observe that \eqref{eq:Ee^aN*(x)<infty} is equivalent to \eqref{eq:R_geq_a} by Theorem\;1.2 in \cite{IksMei:10b}. Next, we show that $\E e^{aN(x)} < \infty$ for some $x \in \R$ implies $\E e^{aN(x)}<\infty$ for all $x \in \R$. Indeed, since $\Prob\{\xi < 0\} > 0$, for any given $y > x$ we find $n \in \N$ such that $\Prob\{S_n \leq x-y\}>0$ and hence
\begin{eqnarray*}
\infty & > & \E e^{a N(x)}
\ \geq\ \E \1_{\{S_n \leq x-y\}} e^{a \sum_{k>n} \1_{\{T_k-S_n \leq y\}}} \\
& \geq &
\Prob \{S_n \leq x-y\} \, \E e^{a N(y)}.
\end{eqnarray*}
Now we show
``\eqref{eq:Ee^aN(x)<infty}$\Rightarrow$\eqref{eq:Ee^aN*(x)<infty}''.
Since $\Prob\{\xi<0,\eta \leq x\} \to \Prob\{\xi < 0\} > 0$ as $x
\to \infty$, we can choose $x \in \R$ so large such that
$\Prob\{\xi<0,\eta \leq x\} > 0$. Using that $N(x) \geq \tau(x)-1$,
we infer from \eqref{eq:Ee^aN(x)<infty} that $\E e^{a\tau(x)} <
\infty$. By Theorem\;\ref{Thm:exponential_tau}(b), this implies
$\E e^{a \tau^*}< \infty$ which is equivalent to
\eqref{eq:Ee^aN*(x)<infty} by Theorem\;1.2 in \cite{IksMei:10b}.

\noindent\eqref{eq:R_geq_a}$\Rightarrow$\eqref{eq:Ee^aN(x)<infty}.
By \eqref{eq:R_geq_a}, there exists a minimal $\gamma>0$ such that $\E e^{-\gamma\xi}=e^{-a}$.
$\gamma$ can be used to define a new probability measure $\Prob_{\gamma}$ by
\begin{equation}	\label{eq:P_gamma}
\E_\gamma h(S_0,\ldots, S_n)	~=~		e^{an} \E e^{-\gamma S_n}h(S_0,\ldots, S_n),
\quad	n\in \N,
\end{equation}
for each nonnegative Borel measurable function $h$ on $\R^{n+1}$
where $\E_{\gamma}$ denotes the expectation with respect to $\Prob_{\gamma}$.

Recall that $\tau_n^*$ denotes the $n$ strictly increasing ladder index of the process $(S_n)_{n \geq 0}$
and that $\mathbb{U}^>(\cdot) ~:=~ \sum_{n \geq 0} \Prob\{S_{\tau_n^*} \in \cdot\}$ denotes the renewal
measure of the corresponding ladder height process.
Then, according to Theorem\;\ref{Thm:shot-noise_general}
(with  $X(t)=\1_{\{\eta\leq t\}}$) it suffices to prove that
\begin{equation}	\label{eq:r^>(0)<infty}
r^{>}(0) := \int (l(-u)\!-\!1)\,\mathbb{U}^{>}({\mathrm d}u)<\infty,
\end{equation}
where $l(x) :=  \E\left(\prod_{n=1}^{\tau^*}e^{a\1_{\{T_n \leq x\}}}\right)$, $x\in\R$.

For $x\in \R$, set
\begin{equation*}
\beta(x)		~:=~	\sup\{n\leq \tau^*: T_n\leq x\}
\end{equation*}
if $\min_{1 \leq n \leq \tau^*} T_n \leq x$,
and let $\beta(x):=0$, otherwise.
Then $l(x) \leq \E e^{a \beta(x)}$. Therefore, \eqref{eq:r^>(0)<infty} follows from
\begin{equation}    \label{eq:int_e^abeta(x)<infty}
\int \big(\E\exp(a\beta(-u))\!-\!1\big)\,\mathbb{U}^{>}({\mathrm d}u) < \infty.
\end{equation}
Now
\begin{align*}
\E & e^{a\beta(x)}	\\
&~=~
\Prob\{\min_{1 \leq n \leq \tau^*} T_n > x\} + \sum_{n \geq 1} e^{an} \Prob\{\tau^* \geq n, \, T_n\leq x, \min_{n+1\leq
k \leq \tau^*}T_k>x\}	\\
&~\leq~
\Prob\{\min_{1 \leq n \leq \tau^*} T_n>x\} + \sum_{n \geq 1}e^{an}\Prob\{\tau^*\geq n, \, T_n\leq x\}.
\end{align*}
Consequently,
\begin{eqnarray}
\E e^{a\beta(x)}-1
& \leq &
\sum_{n \geq 1} e^{an} \Prob\{\tau^* \geq n, \, T_n\leq x\} -
\Prob\{\min_{1 \leq n \leq \tau^*} T_n\leq x\}	\notag	\\
& \leq &
\sum_{n \geq 1} e^{an} \Prob\{\tau^* \geq n, \, T_n \leq x\}	\notag	\\
& = &
\sum_{n \geq 1} e^{an} \E F(x-S_{n-1}) \1_{\{\tau^* \geq n\}}		\notag	\\
& = &
e^a \sum_{n \geq 0} \E_\gamma e^{\gamma S_{n}} F(x-S_{n}) \1_{\{\tau^* > n\}}	\label{eq:Ee^abeta(x)-1<=}
\end{eqnarray}
where $F(y) := \Prob\{\eta \leq y\}$, $y \in \R$ denotes the distribution function of $\eta$
and where \eqref{eq:P_gamma} has been utilized in the last step.
Let $\sigma^*_0 := 0$ and $\sigma^*_n := \inf\{k > \sigma^*_{n-1}: S_k \leq S_{\sigma^*_{n-1}}\}$ for $n \geq 1$
where $\inf \emptyset = \infty$. We now make use of the following duality, see \textit{e.g.}\ \cite[Theorem\;VIII.2.3(b)]{Asmussen:03},
\begin{equation}	\label{eq:duality_lemma}
\sum_{n \geq 0} \Prob_{\gamma}\{S_n \in \cdot, \tau^* > n\}
~=~
\sum_{n \geq 0} \Prob_{\gamma}\{S_{\sigma^*_n} \in \cdot, \sigma^*_n < \infty\}
\end{equation}
Using this in \eqref{eq:Ee^abeta(x)-1<=} gives
\begin{equation*}
\E e^{a\beta(x)}-1
~\leq~
e^a \sum_{n \geq 0} \E_\gamma e^{\gamma S_{\sigma^*_n}} F(x-S_{\sigma^*_n}) \1_{\{\sigma^*_n  <  \infty\}}.
\end{equation*}
Integrating with $x$ replaced by $-u$ w.r.t.\ $\mathbb{U}^{>}({\mathrm d}u)$ gives
\begin{align}
\int \E & (e^{a\beta(-u)} \!-\!1) \, \mathbb{U}^{>}({\mathrm d}u)		\notag	\\
&\leq~
e^a \int \sum_{n \geq 0} \E_\gamma \bigg[ e^{\gamma S_{\sigma^*_n}} F(-u-S_{\sigma^*_n}) \1_{\{\sigma^*_n  <  \infty\}} \, \mathbb{U}^{>}({\mathrm d}u)\bigg]	\notag	\\
&\leq~
e^a \sum_{n \geq 0} \E_\gamma \bigg[e^{\gamma S_{\sigma^*_n}}  \1_{\{\sigma^*_n  <  \infty\}} \int \mathbb{U}^{>}((z+S_{\sigma^*_n})^-) F(\mathrm{d}z) \bigg]	\notag	\\
&\leq~
e^a \sum_{n \geq 0} \E_\gamma \bigg[e^{\gamma S_{\sigma^*_n}}  \1_{\{\sigma^*_n  <  \infty\}}
\bigg(\int \mathbb{U}^{>}(z^-) F(\mathrm{d}z) +\mathbb{U}^{>}(-S_{\sigma^*_n})\bigg)\bigg]
\label{eq:intEe^abeta(x)-1<=}
\end{align}
where in the last step we have used the subadditivity of $y\mapsto y^-$, $y\in\R$ and $\mathbb{U}^{>}(y)$, $y \geq 0$.
Here, $\int \mathbb{U}^{>}(z^-) F(\mathrm{d}z) = \E \mathbb{U}^{>}(\eta^-)$ is finite due to \eqref{eq:EJ+eta-<infty}
and the fact that $\mathbb{U}^{>}(y) \asymp J_+(y)$ as $y \to \infty$ (see \eqref{eq:asymp}).
Further, again by the subadditivity of $\mathbb{U}^{>}(y)$, we have $\mathbb{U}^{>}(y) = O(y)$ as $y \to \infty$.
In view of this, in order to conclude the finiteness of the series in \eqref{eq:intEe^abeta(x)-1<=} it suffices to show that
\begin{equation}	\label{eq:e^thetayU^<=(dy)<infty}
\sum_{n \geq 0} \E_\gamma e^{\theta S_{\sigma^*_n}}  \1_{\{\sigma^*_n  <  \infty\}} ~<~\infty
\end{equation}
for some $0 < \theta < \gamma$.
It is well known (see \textit{e.g.}\ \cite[Section 2.9]{Gut:09} or \cite[Section 1.4]{Alsmeyer:91}) that
the $\sigma^*_n-\sigma^*_{n-1}$, $n \in \N$ are i.i.d.\;random variables taking values in $\N \cup \{\infty\}$.
In particular, $\Prob_{\gamma}\{\sigma^*_n < \infty\} = \Prob_{\gamma}\{\sigma^*_1 < \infty\}^n$ (see, for instance, Theorem\;1.4.3 in \cite{Alsmeyer:91}).
Hence, when $\Prob_{\gamma}\{\sigma^*_1 < \infty\} < 1$, then
\begin{eqnarray*}
\sum_{n \geq 0} \E_\gamma e^{\theta S_{\sigma^*_n}}  \1_{\{\sigma^*_n  <  \infty\}}
& \leq &
\sum_{n \geq 0} \Prob_\gamma \{\sigma^*_n  <  \infty\}	\\
& = &
\sum_{n \geq 0} \Prob_\gamma \{\sigma^*_1  <  \infty\}^n
~=~	\frac{1}{\Prob_{\gamma}\{\sigma^*_1 = \infty\}}	~<~	\infty
\end{eqnarray*}
where the first inequality follows from the fact that $S_{\sigma^*_n} \leq 0$ on $\{\sigma^*_n  <  \infty\}$.
If, on the other hand, $\Prob_{\gamma}{\{\sigma^*_1 <  \infty\}} =  1$, then we can drop the indicators in \eqref{eq:e^thetayU^<=(dy)<infty} and get
$\E_\gamma e^{\theta S_{\sigma^*_n}}  \1_{\{\sigma^*_n  <  \infty\}} = \E_\gamma e^{\theta S_{\sigma^*_n}}$.
By the discussion in \cite[Section 2.9]{Gut:09} or \cite[Section 1.4]{Alsmeyer:91}, the $S_{\sigma^*_n}-S_{\sigma^*_{n-1}}$, $n \in \N$ are i.i.d.\;random variables.
Therefore, $\E_\gamma e^{\theta S_{\sigma^*_n}} = (\E_{\gamma} e^{\theta S_{\sigma^*_1}})^n$ for each $n \in \N$.
By the definition of $\sigma^*_1$, $\Prob_{\gamma} \{S_{\sigma^*_1} \leq 0\} = 1$.
What is more, we have $\Prob_{\gamma} \{S_{\sigma^*_1} < 0\} > 0$ since $\Prob\{\xi < 0\} > 0$ by assumption.
Consequently, $\E_{\gamma} e^{\theta S_{\sigma^*_1}} < 1$.
From these facts, we can derive the convergence of the series in \eqref{eq:e^thetayU^<=(dy)<infty}:
\begin{equation*}
\sum_{n \geq 0} \E_\gamma e^{\theta S_{\sigma^*_n}}  ~=~	\sum_{n \geq 0} (\E_\gamma e^{\theta S_{\sigma^*_1}})^n
~=~ \frac{1}{1-\E_\gamma e^{\theta S_{\sigma^*_1}}}
~<~	\infty.
\end{equation*}
The proof is complete.
\end{proof}

\begin{proof}[Proof of Theorem\;\ref{Thm:exponential_rho}]
This proof is based on the two inequalities
\begin{equation}    \label{eq:rho(x)=n_<=_Tn}
\Prob\{\rho(x)=n\} \leq \Prob\{T_n \leq x\}, \quad x\in\R
\end{equation}
and
\begin{equation}    \label{eq:rho(x)_>=_n_>=_Tn}
\Prob\{\rho(x) \geq n\} \geq \Prob\{T_n\leq x\}, \quad x\in\R.
\end{equation}
We can write $\E e^{a\rho(x)}$ in the following two ways:
\begin{eqnarray}
\E e^{a \rho(x)}
& = &
\sum_{n \geq 0} e^{an} \Prob\{\rho(x)=n\}    \label{eq:e^arho(x)1}   \\
& = &
e^{-a}\left((e^a-1)\sum_{n \geq 0} e ^{an}\Prob\{\rho(x) \geq n\} + 1\right).   \label{eq:e^arho(x)2}
\end{eqnarray}
The implications ``\eqref{eq:V_a(y)<infty}$\Rightarrow$\eqref{eq:Ee^arho<infty}''
and ``\eqref{eq:V_a(x)<infty_fax}$\Rightarrow$\eqref{eq:Ee^arho(x)<infty_fax}'' follow from \eqref{eq:rho(x)=n_<=_Tn} and
\eqref{eq:e^arho(x)1}. In turn, the implications ``\eqref{eq:Ee^arho<infty}$\Rightarrow$\eqref{eq:V_a(y)<infty}'' (for fixed $y=x$)
and ``\eqref{eq:Ee^arho(x)<infty_fax}$\Rightarrow$\eqref{eq:V_a(x)<infty_fax}'' follow from \eqref{eq:rho(x)_>=_n_>=_Tn} and
\eqref{eq:e^arho(x)2}.  

Next assume that \eqref{eq:a<-logP(xi=0)_Ee^-gammaeta<infty} holds in case $\Prob\{\xi \geq 0\}=1$
and that \eqref{eq:a<=R_extra} holds in case $\Prob\{\xi < 0\} > 0$.
Then, by Proposition\;\ref{Prop:IksMei:10}, $V_a^*(y) ~:=~ \sum_{n \geq 0} e^{an}\Prob\{S_n \leq y\}$ is finite for all $y\in\R$ and
$V_a^*(y) \leq C e^{\gamma y}$ for some constant $C>0$ and all $y \geq 0$. Further, by assumption, $\E e^{-\gamma \eta} < \infty$. Taking all this into account, we infer, for $x\in\R$ (condition $\Prob\{\eta\leq x\}>0$ is not required),
\begin{eqnarray*}
V_a(x) = e^a \E V_a^* (x-\eta) \leq e^a V_a^*(0) + e^a C e^{\gamma x} \E e^{-\gamma \eta} < \infty.
\end{eqnarray*}
Thus, the implications ``\eqref{eq:a<-logP(xi=0)_Ee^-gammaeta<infty}$\Rightarrow$\eqref{eq:V_a(y)<infty}''
and ``\eqref{eq:a<=R_extra}$\Rightarrow$\eqref{eq:V_a(x)<infty_fax}'' hold.

We are now left with the proofs of the implications ``\eqref{eq:V_a(y)<infty}$\Rightarrow$\eqref{eq:a<-logP(xi=0)_Ee^-gammaeta<infty}'' and ``\eqref{eq:V_a(x)<infty_fax}$\Rightarrow$\eqref{eq:a<=R_extra}''. Assume that  \eqref{eq:V_a(y)<infty} holds. We have to show that $a < -\log \beta$ with $\beta:=\Prob\{\xi=0\}$. This is trivial in case $\beta = 0$, and is a consequence of the chain of inequalities
\begin{eqnarray*}
\infty
& > &
\sum_{n \geq 1} e^{an} \Prob\{T_n \leq x\}  \\
& \geq &
\sum_{n \geq 1} e^{an} \Prob\{\xi_1 = \ldots = \xi_{n-1} = 0, \eta_n \leq x\}   \\
& = &
e^{a}  \Prob\{\eta \leq x\} \sum_{n \geq 0} (\beta e^a)^n
\end{eqnarray*}
in case $\beta \in (0,1)$, since $\Prob\{\eta \leq x\} > 0$ by the assumption. The inequality $\E e^{-\gamma\eta}<\infty$ will be established at the end of the proof.

Assume now that $\Prob\{\xi < 0\} > 0$ and that \eqref{eq:V_a(x)<infty_fax} holds. Thus,
\begin{equation}    \label{eq:EV^*_a<infty}
\infty > \sum_{n \geq 1} e^{an} \Prob\{T_n \leq x\} = e^a \E V^*_a(x-\eta).
\end{equation}
In particular, $V^*_a(y)$ is finite for some $y \in \R$.
This yields $a < R$ or $a=R$ and $\E \xi e^{-\gamma \xi} < 0$ in view of Proposition\;\ref{Prop:IksMei:10}.

It remains to prove that $\E e^{-\gamma \eta} < \infty$ under the assumption \eqref{eq:V_a(y)<infty} as well as under the assumption \eqref{eq:V_a(x)<infty_fax}. To this end, notice that by what we have already shown, in both cases, $V^*_a(y)$ is finite for all $y \in \R$ and $0 < c := \inf_{y \geq 0} e^{-\gamma y} V^*_a(y) < \infty$ by Proposition\;\ref{Prop:IksMei:10}. Thus, in view of \eqref{eq:EV^*_a<infty}, we obtain 
\begin{equation*}
\infty > \E V^*_a(x-\eta)
\geq c e^{\gamma x} \E e^{- \gamma \eta} \1_{\{\eta \leq x\}},
\end{equation*}
which immediately leads to the conclusion that $\E e^{-\gamma \eta} < \infty$. The proof is herewith complete.
\end{proof}

\subsection{Proofs of the Results on Finiteness of Power Moments of $N(x)$ and $\rho(x)$}	\label{subsec:proofs_power}

\begin{proof}[Proof of Theorem\;\ref{Thm:power_N}]
Assume first that $\xi \geq 0$ a.s.\ and fix an arbitrary $x \in \R$.
According to parts (a) and (b) of Theorem\;\ref{Thm:exponential_N}, whenever
$N(x)<\infty$ a.s.\ it has some finite exponential moments.
In particular, $\E N(x)^p<\infty$ for every $p>0$.
Therefore, from now on, we assume that $\Prob\{\xi<0\}>0$.

\noindent
``\eqref{eq:EN*^p<infty}$\Leftrightarrow$\eqref{eq:criterion_EN*^p<infty}'':
To prove this equivalence, it suffices to show that
$\E N^*(x)^p < \infty$ iff $\E J_+(\xi^-)^{p+1} < \infty$.
This follows from the discussion on p.\;27 in \cite{Kesten+Maller:96}.


\noindent
``\eqref{eq:EN*^p<infty},\eqref{eq:criterion_EN*^p<infty}$\Rightarrow$ \eqref{eq:EN^p<infty}'':
For any $x \in \R$, $\E J_+(\eta^-)<\infty$ is equivalent to $\E J_+((\eta-x)^-)<\infty$.
Further, (by the equivalence \eqref{eq:EN*^p<infty}$\Leftrightarrow$\eqref{eq:criterion_EN*^p<infty}) we know
that $\E N^*(x)^p < \infty$ for some $x \geq 0$ implies $\E N^*(x)^p < \infty$ for all $x \geq 0$.
Thus replacing $\eta$ by $\eta-x$ it suffices to prove that
$\E N(0)^p<\infty$ if $\E N^*(0)^p < \infty$ and $\E J_+(\eta^-)<\infty$.

\noindent
\underline{Case 1:} $p \in (0,1)$.
Using the subadditivity of the function $x \mapsto x^p$, $x \geq 0$ we obtain
\begin{eqnarray*}
N(0)^p
& \leq &
\bigg(\sum_{ k \geq 1} \1_{\{T_k\leq 0,\, S_{k-1} \leq 0\}}\bigg)^p
+ \bigg(\sum_{k \geq 1}\1_{\{T_k\leq 0,\, S_{k-1}>0\}}\bigg)^p	\\
& \leq &
N^*(0)^p+ \sum_{k \geq 1}\1_{\{0< S_{k-1} \leq \eta_k^-\}}
\quad	\text{a.s.}
\end{eqnarray*}
Since $\E N^*(0)^p<\infty$ by assumption, it remains to check that
\begin{equation}		\label{eq:EU(eta^-]<infty}
\sum_{k \geq 1}\Prob\{0<S_{k-1} \leq \eta_k^-\}
~<~
\infty.
\end{equation}
$\lim_{n \to \infty} T_n = \infty$ implies $\lim_{n \to \infty} S_n=+\infty$ a.s.
The latter ensures $\E \tau^* < \infty$.
Let $\mathbb{U}^>(\cdot)$ be the renewal function of the renewal process of strict ladder heights.
For $x \geq 0$ we have
\begin{eqnarray*}
\sum_{k \geq 1} \Prob\{0 < S_{k-1} \leq x\}
& = &
\int_0^\infty \E  \bigg(\sum_{k=0}^{\tau^*-1}\1_{\{-y<S_k\leq x-y\}}\bigg) \, {\rm d}\mathbb{U}^>(y)	\\
& = &
\E \sum_{k=0}^{\tau^*-1}\bigg(\mathbb{U}^>(x-S_k)-\mathbb{U}^>(-S_k)\bigg)
~\leq~
\E \tau^* \mathbb{U}^>(x),
\end{eqnarray*}
where in the last step the subadditivity of the function $x \mapsto \mathbb{U}^>(x)$, $x \geq 0$ has been utilized.
Now \eqref{eq:EU(eta^-]<infty} follows from the last inequality, the fact that
\begin{equation}	\label{eq:U>(x)_asymp_J_+(x)}
\mathbb{U}^>(x) \asymp J_+(x)	\quad	\text{as } x \to \infty
\end{equation}
(see \eqref{eq:asymp}) and the assumption $\E J_+(\eta^-)<\infty$.

\noindent
\underline{Case 2:} $p \geq 1$.
According to \cite[Theorem\;2.1 and formulae (2.9) and
(2.10)]{Kesten+Maller:96}, the first two conditions in
\eqref{eq:criterion_EN*^p<infty} imply
\begin{equation}    \label{eq:Etau*^p+1<infty}
\E (\tau^*)^{p+1} < \infty.
\end{equation}
Let $\tau_0^* := 0$ and $\tau_n^* := \inf\{k > \tau_{n-1}^*: S_k > S_{\tau_{n-1}^*}\}$.
Retaining the notation of Sect.\;\ref{sec:shot-noise} let
$X_n(x) := \sum_{k=\tau_{n-1}^*+1}^{\tau_n^*} \1_{\{T_k \leq x\}}$ and
$\xi_n := S_{\tau_n^*}-S_{\tau_{n-1}^*}$ and observe that $Z(x)=N(x)$.
Since the so defined $\xi_n$ are a.s.\ positive,
we can apply Theorem\;\ref{Thm:power_shot-noise_positive} to conclude that it is enough to show that,
for every $q \in [1,p]$,
\begin{equation}\label{eq:special_s_q(0)<infty}
\int_0^\infty \E \bigg(\sum_{k=1}^{\tau^*}\1_{\{T_k\leq -y\}}
\bigg)^q \, {\rm d} \mathbb{U}^>(y)<\infty,
\end{equation}
where, as above, $\mathbb{U}^>(\cdot)$ is the renewal function of $(S_{\tau_n^*})_{n \geq 0}$.
Fix any $q \in [1,p]$.
For $x \leq 0$, it holds that
\begin{align*}
\bigg(\sum_{k=1}^{\tau^*} & \1_{\{T_k\leq x\}}\bigg)^q	\\
&\leq~
\bigg(\sum_{k=1}^{\tau^*}\big(\1_{\{S_{k-1}-\eta_k^-\leq x,\, -\eta^-_k\leq x\}}
+ \1_{\{S_{k-1}-\eta_k^-\leq x,\,-\eta^-_k>x\}}  \big)\bigg)^q	\\
&\leq~
2^{q-1}\bigg(\bigg(\sum_{k=1}^{\tau^*} \1_{\{-\eta^-_k\leq x\}}\bigg)^q
+ \bigg(\sum_{k=1}^{\tau^*} \1_{\{S_{k-1}-\eta_k^-\leq x, \,-\eta^-_k> x\}}\bigg)^q \bigg)	\\
&=:~	2^{q-1}(I_1(x)+I_2(x)).
\end{align*}
By \cite[Theorem\;5.2 on p.\;24]{Gut:09}, there exists a positive
constant $B_q$ such that
\begin{eqnarray*}
\E \int_0^\infty I_1(-y) \, {\rm d} \mathbb{U}^>(y)
& \leq &
B_q \E (\tau^*)^q \int_0^\infty \Prob\{\eta^- \geq y\} \, {\rm d}\mathbb{U}^>(y)	\\
& \leq &
B_q \E (\tau^*)^q \E \mathbb{U}^>(\eta^-).
\end{eqnarray*}
Here, $\E \mathbb{U}^>(\eta^-)<\infty$ in view of \eqref{eq:U>(x)_asymp_J_+(x)}
and the last condition in \eqref{eq:criterion_EN*^p<infty}. 
$\E (\tau^*)^q<\infty$ is a consequence of \eqref{eq:Etau*^p+1<infty}.

Turning to the term involving $I_2$, notice that from the inequality $(x_1+\ldots+x_m)^q \leq m^{q-1} (x_1^q+\ldots+x_m^q)$, $x_1, \ldots,x_m \geq 0$
and the subadditivity of the function $x \mapsto \mathbb{U}^>(x)$, $x \geq 0$
it follows that
\begin{eqnarray*}
\int_0^\infty I_2(-y) \, {\rm d} \mathbb{U}^>(y)
& \leq &
(\tau^*)^{q-1}\sum_{k=1}^{\tau^*} \int_0^\infty
\1_{\{S_{k-1}-\eta_k^-\leq -y,\,-\eta_k^->-y\}} \, {\rm d} \mathbb{U}^>(y)	\\
& = &
(\tau^*)^{q-1}\sum_{k=1}^{\tau^*}(\mathbb{U}^>(\eta_k^--S_{k-1})-\mathbb{U}^>(\eta_k^-))	\\
& \leq &
(\tau^*)^{q-1}\sum_{k=0}^{\tau^*-1}\mathbb{U}^>(-S_{k})	\\
& \leq &
(\tau^*)^{q-1}\sum_{k=0}^{\tau^*-1}\mathbb{U}^>(\xi_1^-+\ldots+\xi_k^-)	\\
& \leq &
(\tau^*)^{q-1}\bigg(1+
\sum_{k=1}^{\tau^*-1} \big(\mathbb{U}^>(\xi_1^-)+\ldots+\mathbb{U}^>(\xi_k^-)\big)\bigg)	\\
& = &
(\tau^*)^{q-1}\bigg(1+
\sum_{k=1}^{\tau^*-1}(\tau^*-k)\mathbb{U}^>(\xi_k^-)\bigg)	\\
& \leq &
(\tau^*)^{q-1}+(\tau^*)^q \sum_{k=1}^{\tau^*} \mathbb{U}^>(\xi_k^-).
\end{eqnarray*}
By H\"{o}lder's inequality,
\begin{equation*}
\E (\tau^*)^q \sum_{k=1}^{\tau^*} \mathbb{U}^>(\xi_k^-)
\leq (\E (\tau^*)^{q+1})^{q/(q+1)}
\bigg(\E \bigg(\sum_{k=1}^{\tau^*} \mathbb{U}^>(\xi_k^-)\bigg)^{q+1}\bigg)^{1/(q+1)}.
\end{equation*}
The finiteness of the
first factor is secured by \eqref{eq:Etau*^p+1<infty}. According to
\cite[Theorem\;5.2 on p.\;24]{Gut:09}, the second factor is finite
provided $\E (\tau^*)^{q+1}<\infty$ and $\E
\mathbb{U}^>(\xi^-)^{q+1}<\infty$. The former follows from \eqref{eq:Etau*^p+1<infty}
the latter from \eqref{eq:U>(x)_asymp_J_+(x)} and \eqref{eq:criterion_EN*^p<infty}.
Thus we have proved that $\E \int_0^\infty I_2(-y){\rm d}\,\mathbb{U}^>(y)<\infty$,
hence \eqref{eq:special_s_q(0)<infty}.

\noindent
``\eqref{eq:EN^p<infty}$\Rightarrow$\eqref{eq:EN*^p<infty}'':
Assume that $\E N(x)^p<\infty$.
We only have to prove that $\E N^*(y)^p<\infty$ for some $y \geq 0$.

\noindent
\underline{Case 1:} $p\in (0,1)$.
By \cite[Theorem\;2]{Hitc:88}, without loss of generality, we can assume that $\xi$ and $\eta$ are independent.
We will briefly explain how this reduction can be justified.
Let $(\eta_n')_{n \in \N}$ be a sequence of i.i.d.\;copies of $\eta$ and assume that this sequence is independent of the sequence $((\xi_n,\eta_n))_{n \in \N}$.
Define $T_n' := S_{n-1}+\eta_n'$, $n \in \N$ and $\mathcal{F}'_n := \sigma((\xi_k,\eta_k), \eta_k': k = 1,\ldots,n)$. Then
\begin{equation*}
\Prob(T_n \leq x|\mathcal{F}'_{n-1}) = \Prob(\eta_n \leq x-S_{n-1}|\mathcal{F}'_{n-1}) = G(x-S_{n-1}) \quad   \text{a.s.}
\end{equation*}
where $G(t) := \Prob\{\eta \leq t\}$, $t \in \R$
and, analogously,
\begin{equation*}
\Prob(T_n' \leq x|\mathcal{F}'_{n-1}) = \Prob(\eta_n' \leq x-S_{n-1}|\mathcal{F}'_{n-1}) = G(x-S_{n-1})   \quad   \text{a.s.},
\end{equation*}
that is, the sequences $(\1_{\{T_n \leq x\}})_{n \in \N}$ and
$(\1_{\{T_n' \leq x\}})_{n\in\N}$ are tangent.
Moreover, $(\xi_k)_{k \in \N}$ and $(\eta'_k)_{k \in \N}$ are independent.
This means that we may work under the additional assumption of
independence between the random walk and the perturbating
sequence. In the following, we do not introduce a new notation to
indicate this feature.

Let $y \geq x$ be such that $\Prob\{\eta \leq y\} > 0$ and let
$A:=\{N^*(x-y)>0\}$. Observe that $\Prob(A) > 0$ since we assume
that $\Prob\{\xi < 0\} > 0$. The following inequality holds
a.s.\;on $A$:
\begin{eqnarray*}
N(x)^p
& \geq &
\bigg( \sum_{k \geq 1}\1_{\{S_{k-1} \leq x-y,\,\eta_k\leq y\}}\bigg)^p   \\
& = &
N^*(x-y)^p\bigg(\sum_{k \geq 1} \1_{\{S_{k-1} \leq x-y\}} \1_{\{\eta_k \leq y\}} / N^{*}(x-y) \bigg)^p    \\
& \geq &
N^*(x-y)^{p-1} \sum_{k \geq 1} \1_{\{S_{k-1} \leq x-y\}} \1_{\{\eta_k\leq y\}},
\end{eqnarray*}
where for the second inequality the concavity of $t \mapsto t^p$, $t \geq 0$ has been used.
Taking expectations gives
\begin{eqnarray*}
\infty
& > &
\E N(x)^p
~\geq~
\E \Bigg(\1_A N^*(x-y)^{p-1} \sum_{k \geq 1} \1_{\{S_{k-1} \leq x-y\}} \1_{\{\eta_k\leq y\}}\Bigg)  \\
& = &
\Prob\{\eta \leq y\} \, \E N^*(x-y)^p.
\end{eqnarray*}
An appeal to Lemma\;\ref{Lem:U_p_local_finiteness} completes the
proof of this case.

\noindent
\underline{Case 2:} $p\geq 1$.
It holds that
\begin{eqnarray*}
\infty
&>&
\E N(x)^p \ \geq \ \E\bigg( \sum_{k \geq 1}\1_{\{S_{k-1}\leq x-y,\,\eta_k\leq y\}}\bigg)^p \\
&\geq&
{\rm const}\,\E N^*(x-y)^p (\Prob\{\eta\leq y\})^p,
\end{eqnarray*}
where at the last step the convex function inequality
\cite[Theorem\;3.2]{BurkDG:72}, applied to $\Phi(t)=t^p$, has been
utilized. An appeal to Lemma\;\ref{Lem:U_p_local_finiteness}
completes the proof.
\end{proof}

Turning to the proof of Theorem\;\ref{Thm:power_rho}, we start with a simple lemma.

\begin{Lemma}   \label{Lem:rho_trivial}
Let $x \in \R$. Then the following assertions are equivalent:
\begin{itemize}
    \item[(i)]		$\rho(x) = 0$ a.s.;
    \item[(ii)]		$\inf_{k \geq 1} T_k>x$ a.s.;
    \item[(iii)]	$\Prob\{\xi \geq 0\} = 1$ and $\Prob\{\eta > x\} = 1$.
\end{itemize}
\end{Lemma}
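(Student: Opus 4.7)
The plan is to establish the cycle (iii)$\Rightarrow$(ii)$\Rightarrow$(i)$\Rightarrow$(iii), with the implication (ii)$\Rightarrow$(i) being immediate from the definition of $\rho(x)$ together with the convention $\sup\emptyset:=0$.

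First, for (iii)$\Rightarrow$(ii), I would argue as follows. Under (iii), $\xi_n\ge 0$ a.s., so $S_{n-1}\ge 0$ a.s.\ for every $n\in\N$, and $\eta_n>x$ a.s. Hence $T_n=S_{n-1}+\eta_n>x$ a.s.\ for every $n$. The standing assumption $\Prob\{\xi=0\}<1$ combined with $\xi\ge 0$ a.s.\ yields $\Prob\{\xi>0\}>0$, so $S_n\to\infty$ a.s.\ and thus $T_n\to\infty$ a.s. Consequently, for almost every $\omega$ there is a (random) index $N(\omega)$ such that $T_n(\omega)>x+1$ for all $n\ge N(\omega)$. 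Then $\inf_{k\ge 1}T_k=\min_{1\le k<N}T_k\wedge\inf_{k\ge N}T_k$ is the minimum of a finite family of r.v.'s all strictly greater than $x$ a.s.\ and a quantity bounded below by $x+1$, so $\inf_{k\ge 1}T_k>x$ a.s.

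For (i)$\Rightarrow$(iii), observe that $\rho(x)=0$ a.s.\ forces $\{n\in\N:T_n\le x\}=\emptyset$ a.s., hence $T_n>x$ a.s.\ for every $n$. Taking $n=1$ gives $\eta_1=T_1>x$ a.s., which is $\Prob\{\eta>x\}=1$. Now suppose for contradiction that $\Prob\{\xi<0\}>0$, and pick $\varepsilon>0$ with $p_1:=\Prob\{\xi\le-\varepsilon\}>0$. Since $\eta$ is real-valued, we may also choose $M\in\R$ with $p_2:=\Prob\{\eta\le M\}>0$. For $n$ so large that $M-(n-1)\varepsilon\le x$, the independence of the $(\xi_k,\eta_k)$ gives
\begin{equation*}
\Prob\{T_n\le x\}\ \ge\ \Prob\{\xi_1\le-\varepsilon,\ldots,\xi_{n-1}\le-\varepsilon,\eta_n\le M\}\ =\ p_1^{\,n-1}p_2\ >\ 0,
\end{equation*}
contradicting $\rho(x)=0$ a.s. Therefore $\Prob\{\xi\ge 0\}=1$, and (iii) is established.

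The main obstacle is the implication (i)$\Rightarrow$(iii): the direction from $\rho(x)=0$ to $\Prob\{\eta>x\}=1$ is immediate from $T_1=\eta_1$, but ruling out $\Prob\{\xi<0\}>0$ requires the explicit construction above, which uses both the positive probability of sufficiently negative increments of $(S_n)$ and the existence of some $M$ with $\Prob\{\eta\le M\}>0$. Everything else is routine bookkeeping based on the definitions and the standing assumption.
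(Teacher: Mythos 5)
Your proof is correct and follows essentially the same route as the paper's: the implication ruling out $\Prob\{\xi<0\}>0$ uses the same explicit construction with many successively negative increments, and the $\eta$ part comes from $T_1=\eta_1$. One small refinement: you are more careful than the paper in the (iii)$\Rightarrow$(ii) step by invoking $T_n\to\infty$ a.s.\ (via the standing assumption) to pass from ``$T_n>x$ a.s.\ for each $n$'' to the strict inequality ``$\inf_{k\ge 1}T_k>x$ a.s.''; these two statements are not the same in general, and the paper leaves this point tacit.
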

\begin{proof}
The equivalence of (i) and (ii) follows just from the definition
of $\rho(x)$.

If (iii) holds, then $T_n
> x$ a.s.\;for all $n \in \N$, which is equivalent to (ii).
Conversely, since
$\{\eta_1 \leq x\} \subseteq \{\inf_{n \geq 1}T_n \leq x\}$,
the condition $\eta > x$ a.s.\;is necessary for (ii) to hold.
It remains to show that the condition $\xi \geq 0$ a.s.\;is also
necessary for (ii) to hold. To this end,
assume that $\Prob\{\xi_1 \leq -\varepsilon\} > 0$ for some $\varepsilon > 0$.
Further pick $y \in \R$ with $\Prob\{\eta \leq y\}>0$ and choose $n$ so large such that $y-n\varepsilon \leq x$.
Then
\begin{eqnarray*}
\Prob\{\underset{k \geq 1}{\inf}\,T_k\leq x\}
& \geq &
\Prob\{T_{n+1} \leq x\} \geq \Prob\{T_{n+1} \leq y-n\varepsilon\}    \\
& \geq &
\Prob\{\xi_1 \leq -\varepsilon,\ldots,\xi_n \leq -\varepsilon, \eta_{n+1} \leq y\} > 0,
\end{eqnarray*}
which completes the proof.
\end{proof}

\begin{proof}[Proof of Theorem\;\ref{Thm:power_rho}]
``\eqref{eq:Erho*^p<infty}$\Leftrightarrow$\eqref{eq:criterion_Erho^p<infty}''
was proved in \cite[Theorem\;2.1 and formulae (2.9) and (2.10)]{Kesten+Maller:96}.

From the representation
\begin{eqnarray*}
\E \rho(x)^p
&=&
\sum_{n \geq1} n^p \Prob\{\rho(x)=n\} \\
&=&
\sum_{n \geq 1} n^p \Prob\{T_n\leq x, \inf_{k \geq n+1} T_k>x\}   \\
&=&
\sum_{n \geq 1} n^p \big(\Prob\{\inf_{k \geq n}T_k \leq x\} - \Prob\{\inf_{k \geq n+1}T_k\leq x\}\big),
\end{eqnarray*}
and Lemma\;\ref{Lem:summation_by_parts} it follows that $\E \rho(x)^p < \infty$ iff
\begin{equation}    \label{eq:power_renewal_function}
\sum_{n \geq 1} n^{p-1} \Prob\{\inf_{k \geq n} T_k \leq x\}	~=~	\E \mathbb{U}_{p-1}(x-\inf_{k \geq 1} T_k) ~<~ \infty,
\end{equation}
where $\mathbb{U}_{p-1}(y):=\sum_{n \geq 0}n^{p-1} \Prob\{S_n \! \leq \! y\}$ is the power renewal function of $(S_n)_{n \geq 0}$ at $y \in \R$.
Indeed, with $b_n = \Prob\{\inf_{k \geq n} T_k \leq x\} - \Prob\{\inf_{k \geq n+1} T_k \leq x\}$ in Lemma\;\ref{Lem:summation_by_parts}
we have $\sum_{k=n}^\infty b_k = \Prob\{\inf_{k \geq n}T_k \leq x\}$, since $\lim_{n \to \infty} \Prob\{\inf_{k \geq n} T_k\leq x\}=0$
due to the assumption that $T_n \to \infty$ a.s.

\noindent
``\eqref{eq:Erho^p<infty}$\Rightarrow$\eqref{eq:Erho*^p<infty}'':
Suppose \eqref{eq:power_renewal_function} holds for some $x \in \R$.
We distinguish two cases.

\noindent
\underline{Case 1:} $\Prob\{\inf_{k \geq 1} T_k>x\} = 1$.
By Lemma\;\ref{Lem:rho_trivial}, the condition $\inf_{k \geq 1}
T_k>x$ a.s.\ is equivalent to $\Prob\{\xi\geq 0, \eta>x\}=1$.
Hence, \eqref{eq:criterion_Erho^p<infty} trivially holds and,
since ``\eqref{eq:Erho*^p<infty}$\Leftrightarrow$\eqref{eq:criterion_Erho^p<infty}'' has already been established,
also \eqref{eq:Erho*^p<infty}.

\noindent
\underline{Case 2:} $\Prob\{{\inf}_{k \geq 1}T_k>x\}<1$.
In this case, $\mathbb{U}_{p-1}(y)$ must be finite for some $y\geq 0$. From
\cite[Theorem\;2.1]{Kesten+Maller:96}, we infer that $\mathbb{U}_{p-1}(y) <
\infty$ and $\E \rho^*(y)^p<\infty$ for all $y \geq 0$.
Further, by \eqref{eq:asymp},
$\mathbb{U}_{p-1}(y) \asymp J_+(y)^p$ as $y\to\infty$. 
Consequently, since for
any fixed $z\in\R$, $J_+(y+z)^p \sim J_+(y)^p$ as $y\to\infty$,
\eqref{eq:power_renewal_function} implies that $\E
J_+((\inf_{k \geq 1} T_k)^-)^p<\infty$.\label{sas} From
\begin{equation*}
T_k ~=~ \xi_1+\ldots+\xi_{k-1}+\eta_k ~\leq~ \xi_1^++\ldots \xi_{k-1}^++\eta_k ~=:~	\widehat{T}_k,
\quad k \in \N,
\end{equation*}
we conclude that also $\E J_+((\inf_{k \geq 1}\widehat{T}_k)^-)^p<\infty$.
Thus, it suffices to show that $\E J_+((\inf_{k \geq1} \widehat{T}_k)^-)^p<\infty$ implies
$\E J_+(\eta^-)^{p+1} < \infty$. To a large extent, this follows from the proof of \cite[Lemma\;3.4]{AlsIks:09},
although some details have to be explained.

Pick $\varepsilon>0$ such that $\alpha := \Prob\{\inf_{k \geq 1} \widehat{T}_k \geq -\varepsilon\}>0$.
Such an $\varepsilon$ exists since we assume that $T_n \to \infty$ a.s.
Let $(M_k,Q_k)$, $k \geq 1$ be independent copies of a random vector
$(M,Q) := (e^{-\xi^+},e^{-\eta})$, and set
\begin{equation*}
\Pi_k ~:=~ e^{-(\xi_1^++\ldots + \xi_{k}^+)} ~=~ \prod_{j=1}^k M_j,	\quad	k \in \N_0.
\end{equation*}
Using this notation the function $J$ defined after (2) in \cite{AlsIks:09} coincides with the function $J_+$ defined after \eqref{eq:A(x)} if we use the convention that $J_+(x) = 0$ for $x < 0$.
In the cited work it was proved that, for $\delta>\varepsilon$ and for every
nondecreasing and absolutely continuous function $f: [0,\infty) \to [0,\infty)$,
we have
\begin{equation}	\label{eq:AlsIks:09}
\E f\bigg( \sup_{k \geq 1} \Pi_{k-1} Q_k \bigg)
~\geq~ \alpha \E \bigg( \1_{\{Q > e^{2 \delta}\}} f(Q^{1/2}) J_+\bigg(\frac{\log Q}{2} \bigg)\bigg).
\end{equation}
The idea now is to choose $f(x) :=J_+(\log^+ x)^p$ for $x > 0$, $f(0)=0$.
Then \eqref{eq:AlsIks:09} becomes
\begin{eqnarray*}
\E J_+\bigg(\bigg[\inf_{k \geq 1} \widehat{T}_k\bigg]^-\bigg)^p
& \geq &
\alpha \E \big(\1_{\{\eta < -2 \delta\}} J_+(\eta^-/2)^p J_+(\eta^-/2)\big)	\\
& \geq &
\alpha 2^{-(p+1)} \E (\1_{\{\eta < -2 \delta\}} J_+(\eta^-)^{p+1})
\end{eqnarray*}
where in the last step, we have used that, by Lemma\;\ref{Lem:J_+}(c), $J_+(x/2) \geq 2^{-1} J_+(x)$ for all $x \geq 0$.
So in order to make the argument rigorous it remains to show that $f$ has the properties needed.
The latter follows from Lemma\;\ref{Lem:J_+}.

\noindent
``\eqref{eq:Erho*^p<infty}$\Rightarrow$\eqref{eq:Erho^p<infty}'':
We have to prove that the inequality in \eqref{eq:power_renewal_function}
holds for any $x \in \R$. By \cite[Theorem\;2.1]{Kesten+Maller:96},
$\E \rho^*(y)^p < \infty$ for some $y \geq 0$ ensures
that $\mathbb{U}_{p-1}(y) < \infty$ for every $y \geq 0$,
and by \eqref{eq:asymp}, $\mathbb{U}_{p-1}(y)\asymp J_+(y)^p$, $y\to\infty$.

\noindent
\underline{Case 1:}
There exists $y \in \R$ such that $\inf_{k \geq 1}T_k > y$ a.s. Then
\begin{equation*}
\E \mathbb{U}_{p-1}(x-\inf_{k \geq 1} T_k)	~\leq~	\mathbb{U}_{p-1}(x-y)	~<~	\infty,
\end{equation*}
and \eqref{eq:power_renewal_function} holds.

\noindent
\underline{Case 2:}
$\Prob\{\inf_{k \geq 1}T_k>y\}<1$ for all $y \in \R$.
To guarantee that the inequality \eqref{eq:power_renewal_function}
holds it remains to prove that
$\E J_+((\inf_{k \geq 1} T_k)^-)^{p} < \infty$
(argue in the same way as in the proof of Case 2 on p.\;\pageref{sas}).
\newline
\underline{Subcase 2a:} $\xi\geq 0$ a.s. Set $f(x):=J_+(x)^{p}$.
$f$ is absolutely continuous, in particular a.e.\ differentiable with derivative $f'$.
Therefore, it is sufficient to show that
\begin{equation*}
K ~:=~ \int_0^\infty f'(u)\Prob\{-\inf_{k \geq 1} T_k>u\} \, {\rm d}u ~<~ \infty.
\end{equation*}
Since
\begin{equation*}
\Prob \{-\inf_{k \geq 1}T_k > u\}
~\leq~ \sum_{k \geq 1} \Prob\{T_k\leq -u\}
~=~ \E \mathbb{U}_0(-u-\eta),
\end{equation*}
we have
\begin{equation*}
K ~=~ \E \int_0^{\eta^-} \!\! f'(u)\mathbb{U}_0(-u-\eta) \, {\rm d}u
~\leq~ \E f(\eta^-)\mathbb{U}_0(\eta^-)
~<~ \infty.
\end{equation*}
The assertion follows in view of the asymptotics \eqref{eq:asymp}
and the assumption $\E J_+(\eta^-)^{p+1}<\infty$.

\noindent
\underline{Subcase 2b:}\footnote{The reason for the separate treatment of Subcases 2a and 2b is as follows.
Assume that Ê$\Prob\{\xi<0\} > 0$. When $p \geq 1$, the argument given for Subcase 2a works as well since then,
due to the assumption $\E J_+(\xi^-)^{p+1} < \infty$, we also have $\mathbb{U}_0(y) < \infty$ for all $y$.
However, when $p \in (0,1)$ that argument fails which forces us to treat the case
$\Prob\{\xi<0\}>0$ separately as Subcase 2b.}
$\Prob\{\xi<0\}>0$.
Define the stopping times
\begin{equation*}
\tau^{*}_0:=0, \quad \tau^{*}_{n+1}:=\inf\{k>\tau^{*}_n: S_k > S_{\tau^{*}_n}\},
\quad n \in \N_0.
\end{equation*}
By assumption, $\lim_{n \to \infty} S_n=\infty$ a.s.
Hence, each $\tau^{*}_n$ is a.s.\ finite.
For $k \in \N_0$, define new random variables as follows:
\begin{equation*}
\widehat{\eta}_{k}:=
\min\{\eta_{\tau^{*}_{k-1}+1}, \xi_{\tau^{*}_{k-1}+1}+\eta_{\tau^{*}_{k-1}+2},\ldots,\, \xi_{\tau^{*}_{k-1}+1}+\ldots+\xi_{\tau^{*}_{k}-1}+\eta_{\tau^{*}_{k}}\};
\end{equation*}
\begin{equation*}
\widehat{\xi}_{k}:=\xi_{\tau^{*}_{k-1}+1}+\ldots+\xi_{\tau^{*}_{k}}.
\end{equation*}
The random vectors $(\widehat{\xi}_k, \widehat{\eta}_k)$, $k\in\N$,
are independent copies of the random vector
$(S_{\tau^{*}_1}, \min_{1 \leq k \leq \tau^{*}_1} T_k)$.
Denote by $(\widehat{T}_k)_{k\in\N}$ the perturbed random walk generated by
the vectors $(\widehat{\xi}_k,\widehat{\eta_k})$, $k \in \N$, \textit{i.e.},
\begin{equation*}
\widehat{T}_k:=\widehat{S}_{k-1}+\widehat{\eta}_{k}, \quad  k\in\N,
\end{equation*}
where
\begin{equation*}
\widehat{S}_0:=0, \quad \widehat{S}_k:=\widehat{\xi}_1+\ldots+\widehat{\xi}_k, \quad k\in\N.
\end{equation*}
Note that, by construction, $\widehat{S}_k>0$ for all $k \in \N$. Finally,
\begin{equation*}
\inf_{k \geq 1} \widehat{T}_k ~=~ \inf_{k \geq 1}T_k.
\end{equation*}
According to the already established Subcase 2a
it suffices to prove that
\begin{equation}    \label{eq:aux}
\E J_+(\widehat{\eta}^-) ^{p+1}= \E J_+ \Big(\Big(\min_{1 \leq k\leq \tau^{*}_1}T_k\Big)^-\Big)^{p+1} < \infty.
\end{equation}
To this end, obtain that, a.s.,
\begin{equation*}
\Big(\min_{1 \leq k \leq \tau^{*}_1} T_k \Big)^-
~\leq~ \Big| \min_{0 \leq k \leq \tau^{*}_1-1} S_k \Big| + \Big(\min_{1 \leq k \leq \tau^{*}_1}\eta_k \Big)^-
~\leq~ \Big| \min_{0 \leq k \leq \tau^{*}_1-1} S_k \Big| + \sum_{k=1}^{\tau^{*}_1}\eta_k^-.
\end{equation*}
Hence, using the monotonicity and subadditivity of $x \mapsto J_+(x)$
we conclude that a.s.
\begin{eqnarray*}
J_+\Big(\Big(\min_{1\leq k\leq \tau^{*}_1} T_k\Big)^-\Big)^{p+1}
& \leq &
\bigg(J_+ \Big( \Big| \min_{0 \leq k \leq \tau^{*}_1-1} S_k \Big| \Big) + \sum_{k=1}^{\tau^{*}_1}J_+(\eta_k^-)\bigg)^{p+1}   \\
& \leq &
2^p \bigg(J_+ \Big(\Big| \min_{0 \leq k \leq \tau^{*}_1-1} S_k \Big| \Big)^{p+1} + \bigg(\sum_{k=1}^{\tau^{*}_1}J_+(\eta_k^-)\bigg)^{p+1}\bigg)
\end{eqnarray*}
Using the already proved equivalence \eqref{eq:Erho*^p<infty}$\Leftrightarrow$\eqref{eq:criterion_Erho^p<infty},
$\E \rho^*(y)^p<\infty$ for some $y\geq 0$ implies that $\E (\tau^{*}_1)^{p+1}<\infty$ and $\E J_+(\xi^-)^{p+1}<\infty$.
Hence
\begin{equation}
\E J_+\Big(\Big | \min_{0 \leq k \leq \tau^{*}_1-1} S_k \Big | \Big)^{p+1} ~<~ 	\infty,
\end{equation}
by virtue of Lemma\;\ref{Lem:EJ^p+1xi^-<infty}.
Further, by \cite[Theorem\;5.2 on p.\;24]{Gut:09},
\begin{equation*}
\E \bigg(\sum_{k=1}^{\tau^{*}_1}J_+(\eta_k^-)\bigg)^{p+1}
\leq {\rm const} \, \E J_+(\eta^-)^{p+1} \E (\tau^{*}_1)^{p+1}<\infty,
\end{equation*}
and \eqref{eq:aux} follows. The proof is complete.
\end{proof}

\subsection{Proofs of the Results on Finiteness of Power Moments of $\tau$}	\label{subsec:proofs power moments tau}

\begin{proof}[Proof of Proposition\;\ref{Prop:Etau*^p<infty=>Etau^p<infty}]
When $\Prob\{\xi \geq 0\} = 1$, then $\E \tau(x)^p < \infty$ for all $x \geq 0$ by Theorem\;\ref{Thm:exponential_tau}.
Therefore, we restrict ourselves to the case $\Prob\{\xi < 0\} > 0$. Assume that $\E (\tau^*)^p < \infty$.
We use a technique similar to the technique used in the proof of Theorem\;\ref{Thm:exponential_tau}.
Pick $s \in \R$ such that $\gamma := \Prob\{\eta_{\tau^*} < \xi_{\tau^*}+s\} < 1$
and let $\sigma := \inf\{n \geq 1: \eta_{\tau_n^*} \geq \xi_{\tau_n}^*+s\}$. Then $\tau(s) \leq \tau_{\sigma}^*$ and
\begin{eqnarray*}
\E (\tau_{\sigma}^*)^p
& = &
\sum_{n \geq 1} \E \1_{\{\sigma=n\}} (\tau_n^*)^p	\\
& \leq &
\sum_{n \geq 1} \E \1_{\{\sigma=n\}} (n^{p-1} \vee 1) \sum_{k=1}^n (\tau_k^*-\tau_{k-1}^*)^p	\\
& \leq &
\E (\tau^*)^p \sum_{n \geq 1} (n^{p-1} \vee 1) \gamma^{n-2}
~<~ \infty.
\end{eqnarray*}
$\E \tau(x)^p < \infty$ for arbitrary $x \geq 0$ can now be concluded as in the proof Theorem\;\ref{Thm:exponential_tau} for exponential moments.
\end{proof}

\begin{proof}[Proof of Proposition\;\ref{Prop:Etau*=infty_and_Etau<infty}]
The first assertion follows from $\sigma(x) \geq \tau(x)$.
Concerning the second assertion, notice that $\E \sigma(x)^p$ is finite iff
\begin{equation*}
\sum_{k \geq 1} k^{p-1} \Prob\{\sigma(x) > k\}	~=~	\sum_{k \geq 1} k^{p-1} \prod_{j=1}^{k} \Prob\{\eta \leq x + c(j-1)\}	~<~	\infty.
\end{equation*}
Denote the $k$th summand in the series above by $a_k$, $k \in \N$.
By Raabe's test, the series converges if $\lim_{k \to \infty} k(\frac{a_k}{a_{k+1}} - 1) > 1$ and it diverges if the limit is $<1$.
Now
\begin{eqnarray*}
k \Big(\frac{a_k}{a_{k+1}} - 1\Big)
& = &
k \Bigg(\frac{k^{p-1} - (k+1)^{p-1} \Prob\{\eta \leq x + ck\}}{(k+1)^{p-1} \Prob\{\eta \leq x + ck\}}\Bigg)	\\
& \sim &
k (1 - (1+1/k)^{p-1} \Prob\{\eta \leq x + ck\})	\\
& = &
k (1 - (1 +(p-1)/k + o(1/k)) \Prob\{\eta \leq x + ck\})	\\
& = &
k\Prob\{\eta > x + ck\} - (p-1)\Prob\{\eta \leq x+ck\} + o(1)	\\
& \to &
s/c - (p-1)	\qquad	\text{as } k \to \infty.
\end{eqnarray*}
This limit is $> 1$ if $s > cp$ and it is $<1$ if $s < cp$.
\end{proof}

\footnotesize
\noindent	{\bf Acknowledgements}	\quad
The research of G.\;Alsmeyer was supported by DFG SFB 878 ``Geometry, Groups and Actions''.
The research of M.\;Meiners was partly supported by DFG-grant Me 3625/1-1 and DFG SFB 878 ``Geometry, Groups and Actions''.
A part of this study was done
while A.\;Iksanov was visiting M\"{u}nster in January/February and
May 2011. Grateful acknowledgment is made for financial support
and hospitality. Also supported by a grant awarded by the
President of Ukraine (project $\Phi$47/012) and partly
supported by a grant from Utrecht University, the Netherlands.
The authors thank an anonymous referee for a careful reading of the manuscript and helpful comments.
\normalsize

\section{Appendix: Auxiliary Results}	\label{sec:appendix}

\subsection{Auxiliary Results from Classical Random Walk Theory}	\label{subsec:aux_RW}

This section contains some facts from classical random walk theory that are either
reformulations or slight extensions of known results.
The first result is a combination of Theorems\;2.1 and 2.2 in \cite{IksMei:10a}.

\begin{Prop}   \label{Prop:IksMei:10}
For $a>0$, let $V^*_a(I) := \sum_{n \geq 0} e^{an} \Prob\{S_n \in I\}$, $I \subseteq \R$ Borel and $V^*_a(x) := V^*_a((-\infty,x])$, $x \in \R$. Further, let $R:=-\log \inf_{t \geq 0} \E e^{-t \xi}$.
\begin{itemize}
    \item[(a)]
    \begin{itemize}
    \item[(i)]
    Assume that $\Prob\{\xi\geq 0\}=1$ and let
    $\beta:=\Prob\{\xi=0\}\in [0,1)$. Then for $a>0$ the following
    conditions are equivalent:
    \begin{equation}	\label{eq:V^*_a(x)<infty}
    V^*_a(x) < \infty \text{ for some/all } x\geq 0;
    \end{equation}
    \begin{equation}\label{eq:0<V^*_a(I)<infty}
    0 < V^*_a(I) < \infty \text{ for some bounded interval } I \subseteq \R;
    \end{equation}
    \begin{equation}    \label{eq:a<-log(beta)}
    a<-\log \beta
    \end{equation}
    where $-\log \beta := \infty$ if $\beta = 0$.
    \item[(ii)]
    Assume that $\Prob\{\xi<0\}>0$.
    Then for $a>0$ condition \eqref{eq:V^*_a(x)<infty} (with $x\in\R$) is equivalent to
    \begin{equation}    \label{eq:a<R_varphi'<0}
    a < R
    \quad   \text{or} \quad
    a=R \ \text{ and } \ \E \xi
    e^{-\gamma_0\xi}>0,
    \end{equation}
    where $\gamma_0$ is the unique positive value defined by $\E e^{-\gamma_0\xi}= e^{-R}$.
    \end{itemize}
    \item[(b)]
    Whenever $V^*_a(x)$ is finite,
    \begin{equation*}
    0 < \liminf_{x \to \infty} e^{-\gamma x} V^*_a(x)\leq \limsup_{x\to\infty} e^{-\gamma x}V^*_a(x) < \infty.
    \end{equation*}
\end{itemize}
\end{Prop}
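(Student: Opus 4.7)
My approach is that this proposition is essentially a reformulation of Theorems 2.1 and 2.2 in \cite{IksMei:10a}, so the cleanest plan is to reduce to those results; the underlying technique in every part is the classical Cramér-type exponential change of measure, which I sketch below.

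For part (a)(i) with $\xi \geq 0$ a.s., the necessity of $a < -\log \beta$ comes from the elementary lower bound $V_a^*(0) \geq \sum_{n \geq 0} \beta^n e^{an}$ (using $\Prob\{S_n = 0\} = \beta^n$). For sufficiency I would decompose $S_n$ according to the number of strictly positive $\xi_k$'s among the first $n$ increments, which are binomially distributed with parameter $1-\beta$; after interchanging summations this reduces the problem to the case $\xi > 0$ a.s., for which a Chebyshev argument with the Laplace transform $\varphi(t) := \E e^{-t\xi}$ yields finiteness (since $\inf_{t \geq 0} \varphi(t) = 0$ whenever $\xi > 0$ a.s.).

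For part (a)(ii), with $\Prob\{\xi < 0\} > 0$, the function $\varphi$ is convex with $\varphi(0) = 1$ and tends to $\infty$ on its effective domain, so its infimum $e^{-R}$ is attained at some unique $\gamma_0 > 0$. For $a < R$, choose the smallest positive $\gamma$ with $\varphi(\gamma) = e^{-a}$; it lies on the strictly decreasing branch of $\varphi$. Introducing the tilted probability measure $\Prob_\gamma$ via
\[
\E_\gamma f(S_0, \ldots, S_n) ~=~ e^{an} \, \E \bigl[ e^{-\gamma S_n} f(S_0, \ldots, S_n) \bigr],
\]
the walk $(S_n)_{n \geq 0}$ acquires strictly positive drift under $\Prob_\gamma$, and
\[
V_a^*(x) ~=~ \E_\gamma \sum_{n \geq 0} e^{\gamma S_n} \1_{\{S_n \leq x\}} ~\leq~ e^{\gamma x} \, \E_\gamma N^*(x) ~<~ \infty,
\]
where $\E_\gamma N^*(x)$ is finite because the tilted walk is positively divergent. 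Conversely, for $a > R$ no valid $\gamma$ exists and a direct inspection of $\sum_n e^{an} \Prob\{S_n \in I\}$ yields divergence. The borderline $a = R$, where $\gamma = \gamma_0$, forces $\gamma_0$ to sit at the boundary of the effective domain of $\varphi$ with $\varphi$ still strictly decreasing there; the extra condition $\E \xi e^{-\gamma_0 \xi} > 0$ precisely encodes this ``thin-tail'' situation.

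For part (b), apply the tilted representation once more: $e^{-\gamma x} V_a^*(x) = \E_\gamma \sum_{n \geq 0} e^{-\gamma(x - S_n)} \1_{\{S_n \leq x\}}$, and the renewal theorem applied to the positively drifting tilted walk turns the right-hand side into a quantity bounded above and below by strictly positive constants as $x \to \infty$. The main obstacle throughout the argument is the critical case $a = R$ in (a)(ii): the tilted walk loses its strict positive drift, and a careful ladder-height analysis is needed to certify convergence and to produce the extra condition in \eqref{eq:a<R_varphi'<0}.
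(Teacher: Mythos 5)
Your proposal skips the one piece of content that the paper actually \emph{proves} rather than cites: the equivalence with condition \eqref{eq:0<V^*_a(I)<infty}, that is, $0 < V^*_a(I) < \infty$ for some bounded interval $I$. The paper explicitly flags this as the new ingredient (it is why a proof is included at all rather than a bare citation of \cite{IksMei:10a}), and it requires its own argument: ``\eqref{eq:V^*_a(x)<infty}$\Rightarrow$\eqref{eq:0<V^*_a(I)<infty}'' is trivial, but ``\eqref{eq:0<V^*_a(I)<infty}$\Rightarrow$\eqref{eq:a<-log(beta)}'' needs a contraposition step. Concretely, if $a \geq -\log\beta$ with $\beta>0$, pick $n$ with $\Prob\{S_n\in I\}>0$; then $\Prob\{S_{n+k}\in I\}\geq\Prob\{S_n\in I,\,\xi_{n+1}=\cdots=\xi_{n+k}=0\}=\Prob\{S_n\in I\}\beta^k$, so $V^*_a(I)\geq e^{an}\Prob\{S_n\in I\}\sum_{k\geq0}(e^a\beta)^k=\infty$. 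Without this observation, the three-way equivalence in part (a)(i) is not established; your sketch only connects \eqref{eq:V^*_a(x)<infty} and \eqref{eq:a<-log(beta)}.

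For the remainder of the proposition your plan is reasonable but redundant relative to the paper's route: the paper simply invokes \cite[Theorems\;2.1 and 2.2]{IksMei:10a} for the equivalence of \eqref{eq:V^*_a(x)<infty} and \eqref{eq:a<-log(beta)}, for part (a)(ii), and for part (b), whereas you attempt to re-derive those results via the Cram\'er tilting $\Prob_\gamma$ and a renewal-theorem argument. That approach is sound in outline (and your decomposition of $S_n$ by the number of strictly positive increments does correctly reduce part (a)(i) sufficiency to the case $\xi>0$ a.s., with the binomial inner sum collapsing by the negative-binomial identity precisely when $\beta e^a<1$), but it is re-proving cited material rather than following the paper's proof. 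The one genuinely new step, the contraposition for \eqref{eq:0<V^*_a(I)<infty}, is the piece you are missing.
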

Part (a) of the Proposition\;contains more equivalent criteria for the finiteness of the exponential renewal function of a random walk than Theorem\;2.1 in \cite{AlsIks:09}. For this reason, we decided to include a proof.
\begin{proof}
We begin with part (a)(i) and assume that $\Prob\{\xi \geq 0\} = 1$. Then the equivalence between \eqref{eq:V^*_a(x)<infty} and \eqref{eq:a<-log(beta)} follows from \cite[Theorem\;2.1(b) and (c)]{IksMei:10a}. Moreover, the implication ``\eqref{eq:V^*_a(x)<infty}$\Rightarrow$\eqref{eq:0<V^*_a(I)<infty}'' is trivial.
It remains to prove that $0 < V^*_a(I) < \infty$ implies that $a<-\log \beta$. We will use contraposition and assume that $a \geq -\log \beta$, in particular, $\beta > 0$. Then let $I \subseteq [0,\infty)$ denote an arbitrary bounded interval with $V^*_a(I) > 0$. We have to show that $V^*_a(I)=\infty$.
To this end, notice that $V^*_a(I) > 0$ implies that $\Prob\{S_n \in I\} > 0$ for some $n \in \N$. Then, for any $k \geq 0$, we infer
\begin{equation*}
\Prob\{S_{n+k} \in I\} \geq \Prob\{S_n \in I, \xi_{n+1} = \ldots = \xi_{n+k} = 0\} = \Prob\{S_n \in I\} \beta^k.
\end{equation*}
In conclusion,
\begin{eqnarray*}
V^*_a(I)
& = &
\sum_{k \geq 0} e^{ak} \Prob\{S_k \in I\}
\geq
\sum_{k \geq 0} e^{a(n+k)} \Prob\{S_{n+k} \in I\}    \\
& \geq &
e^{an}  \Prob\{S_n \in I\} \sum_{k \geq 0} (e^a\beta)^k = \infty.
\end{eqnarray*}

Part (a)(ii) follows from Theorem\;2.1(a) in \cite{IksMei:10a}.

Part (b) follows from Theorem\;2.2 in \cite{IksMei:10a}.
\end{proof}

Lemma\;\ref{Lem:U_p_local_finiteness} will be used in the proof of Theorem\;\ref{Thm:power_N}.

\begin{Lemma} \label{Lem:U_p_local_finiteness}
Let $p>0$ and $I \subseteq \R$ be an open interval such that
$0 < \E \left( \sum_{n \geq 0} \1_{\{S_n \in I\}}\right)^p < \infty$.
Then $\E \left( \sum_{n \geq 0} \1_{\{S_n \in J\}}\right)^p <
\infty$ for any bounded interval $J\subseteq \R$.
In particular, $\E N^*(x)^p<\infty$ for some $x \in \R$ entails
$\E N^*(y)^p<\infty$ for every $y \in \R$.
\end{Lemma}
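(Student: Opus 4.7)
The plan is a covering argument combined with the strong Markov property. Write $I = (a,b)$ with $\ell := b-a > 0$. Since $J$ is bounded, one can find finitely many shifts $c_1, \ldots, c_M \in \R$ with $J \subseteq \bigcup_{k=1}^M(I+c_k)$, yielding the pointwise bound
\begin{equation*}
V(J) := \sum_{n \geq 0}\1_{\{S_n \in J\}} ~\leq~ \sum_{k=1}^M V(I+c_k).
\end{equation*}
Applying the elementary inequality $(\sum_k x_k)^p \leq (M^{p-1} \vee 1)\sum_k x_k^p$ reduces the assertion to the single-shift claim: $\E V(I+c)^p < \infty$ for every fixed $c \in \R$.

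To handle a single shift, fix $c \in \R$; if $\Prob\{V(I+c) > 0\} = 0$ there is nothing to show, so assume $T := \inf\{n \geq 0: S_n \in I+c\}$ is finite with positive probability. The strong Markov property at $T$ says that $(S_{T+n}-S_T)_{n\geq 0}$ is an independent copy of $(S_n)_{n\geq 0}$, so on $\{T<\infty\}$ the visits to $I+c$ from $T$ onward equal one plus the visits of the fresh walk to $I+c-S_T$; the latter interval has length $\ell$ and contains $0$. To turn this into a moment bound, I would first apply the same decomposition to $I$ itself (at $T_I := \inf\{n: S_n \in I\}$): finiteness of $\E V(I)^p$ yields, via the integrated identity
\begin{equation*}
\E V(I)^p ~\geq~ \int_I g(y)\,\mu(\mathrm{d}y), \qquad g(y) := \E V(I-y)^p,
\end{equation*}
with $\mu$ the hitting distribution of $I$, that $g(y) < \infty$ for $\mu$-a.e.~$y \in I$. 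Picking any such $y_0$ supplies a reference shift $I_0 := I - y_0$ of length $\ell$ containing $0$ with $\E V(I_0)^p < \infty$. The hypothesis $0 < \E V(I)^p < \infty$ also rules out recurrence of $(S_n)$ (any open interval in the support of the occupation measure of a recurrent walk has $V(I) = \infty$ a.s.), so the renewal measure $\mathbb{U}$ is locally finite. A second covering pass with $I_0$ as the reference interval, combined with local boundedness of the iterated potential kernels of the transient walk, then allows one to bound $\E V(I+c)^p$ for arbitrary $c$.

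\textbf{Main obstacle.} The delicate step is closing the bootstrap cleanly: the integrated identity gives finiteness of $g$ only $\mu$-almost everywhere, and upgrading from one ``good'' shift $I_0$ to all shifts requires ensuring that the moment bound truly propagates across iterated strong Markov decompositions without accumulating divergent factors. For $p \leq 1$ the argument simplifies via subadditivity and the local finiteness of $\mathbb{U}$; for $p > 1$ one handles this by induction on $\lfloor p \rfloor$ using the strong Markov expansion separately for the integer and fractional parts, in the spirit of the proof of Theorem~\ref{Thm:power_shot-noise_positive}. The one-sided cases ($\xi \geq 0$ or $\xi \leq 0$ a.s.) require a small separate argument since many shifts $I+c$ are inaccessible, but for exactly that reason give $V(I+c)=0$ trivially.
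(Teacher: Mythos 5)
The high-level plan (cover $J$ and use the strong Markov property at entry times) is the same as the paper's, but the proposal is missing the one idea that makes the bootstrap close, and the points you flag as the "main obstacle" are exactly where the argument breaks.

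The paper's proof does not look for a good shift of $I$ via the hitting distribution. Instead it first \emph{shrinks} $I$: choosing $\varepsilon>0$ so small that $I_\varepsilon:=(a+\varepsilon,b-\varepsilon)$ still has $\E\bigl(\sum_n\1_{\{S_n\in I_\varepsilon\}}\bigr)^p>0$, it applies the strong Markov property at $\tau^*(I_\varepsilon)$. The point is that on $\{\tau^*(I_\varepsilon)<\infty\}$ the entry point lies in $I_\varepsilon$, so the whole $\varepsilon$-ball around it is contained in $I$; this yields
\begin{equation*}
\infty>\E\Bigl(\sum_{n\geq 0}\1_{\{S_n\in I\}}\Bigr)^p\ \geq\ \Prob\{\tau^*(I_\varepsilon)<\infty\}\,\E\Bigl(\sum_{n\geq 0}\1_{\{|S_n|<\varepsilon\}}\Bigr)^p,
\end{equation*}
that is, a \emph{symmetric} reference interval $(-\varepsilon,\varepsilon)$ with finite $p$-th moment. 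Once you have that, any interval $J_k$ of length at most $\varepsilon$ is handled cleanly: after the first entry, all subsequent visits to $J_k$ lie in the $\varepsilon$-ball around the entry point, so $\E\bigl(\sum_n\1_{\{S_n\in J_k\}}\bigr)^p\le\Prob\{\tau^*(J_k)<\infty\}\,\E\bigl(\sum_n\1_{\{|S_n|<\varepsilon\}}\bigr)^p$. The covering step then finishes. The margin/shrinkage is what lets the strong Markov comparison go through in a single step.

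Your route via the integrated identity $\E V(I)^p\ge\int_I g(y)\,\mu(\mathrm{d}y)$ only produces a reference interval $I_0=I-y_0$ of \emph{full} length $\ell$ containing $0$. That is not enough: when you decompose $V(I+c)$ at its first hitting time $T$, the post-$T$ interval is $I+c-S_T$, which again has length $\ell$ but with $0$ in an uncontrolled position inside it (depending on where $S_T$ landed in $I+c$). Having $\E V(I_0)^p<\infty$ for one such interval $I_0$ does not dominate $\E V(I+c-z)^p$ over all $z\in I+c$; you would need $\E V((-\ell,\ell))^p<\infty$, which is exactly the quantity you cannot reach by this device — covering $(-\ell,\ell)$ by shifts of $I_0$ returns you to the original problem. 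The auxiliary observations you propose (transience of $(S_n)$, hence local finiteness of $\mathbb{U}$) only control the case $p=1$; for $p\neq 1$ local finiteness of the renewal measure says nothing about $p$-th moments of local times, and the suggested induction on $\lfloor p\rfloor$ in the spirit of Theorem~\ref{Thm:power_shot-noise_positive} is not spelled out and does not obviously close the gap. The missing ingredient is precisely the shrink-by-$\varepsilon$ step that converts the one-sided information about $I$ into a two-sided bound on visits to a small ball around $0$.
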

\begin{Rem}
In the case that $x \geq 0$ the second assertion was known from
\cite{Kesten+Maller:96}.
\end{Rem}
\begin{proof}
Let $I = (a,b)$ such that
$0 < \E \left( \sum_{n \geq 0} \1_{\{S_n \in I\}}\right)^p < \infty$.
We assume w.l.o.g.\ that $-\infty < a < b < \infty$.
We first show that
\begin{equation}    \label{eq:E(sum_|S_n|<eps)^p<infty}
\E \bigg(\sum_{n \geq 0} \1_{\{|S_n| < \varepsilon\}}\bigg)^p < \infty
\quad	\text{for some } \varepsilon > 0.
\end{equation}
Pick $\varepsilon > 0$ so small that $I_{\varepsilon} := (a+\varepsilon,b-\varepsilon)$
satisfies $\E \left( \sum_{n \geq 0} \1_{\{S_n \in I_{\varepsilon}\}}\right)^p > 0$.
Then $\Prob\{S_n \in I_{\varepsilon}\} > 0$ for some $n \in \N$. In particular,
$\Prob\{\tau^{*}(I_{\varepsilon}) < \infty\} > 0$, where
$\tau^{*}(I_{\varepsilon}) = \inf\{n \geq 0: S_n \in I_{\varepsilon}\}$.
Using the strong Markov property at $\tau^{*}(I_{\varepsilon})$, we get
\begin{eqnarray*}
\infty & > &
\E \bigg( \sum_{n \geq 0} \1_{\{S_n \in I\}}\bigg)^p   \\
& \geq &
\E \bigg( \1_{\{\tau^{*}(I_{\varepsilon})<\infty\}} \sum_{n \geq \tau^{*}(I_{\varepsilon})}
\1_{\{|S_n - S_{\tau^{*}(I_{\varepsilon})}| < \varepsilon\}}\bigg)^p   \\
& = &
\Prob\{\tau^{*}(I_{\varepsilon}) < \infty\} \, \E \bigg(\sum_{n \geq 0} \1_{\{|S_n| < \varepsilon\}}\bigg)^p.
\end{eqnarray*}
Hence, \eqref{eq:E(sum_|S_n|<eps)^p<infty} holds.
Now let $J$ be a non-empty bounded interval $\subseteq \R$, and
$J_1, \ldots, J_m$ open intervals of length at most $\varepsilon$
such that $J \subseteq J_1 \cup \ldots \cup J_m$.
Using the inequality
$(x_1+\ldots+x_m)^p \leq (m^{p-1}\vee 1)(x_1^p+\ldots+x_m^p)$, $x_j \geq 0$ for $j=1,\ldots,m$,
leads to
\begin{eqnarray*}
\E \bigg( \sum_{n \geq 0} \1_{\{S_n \in J\}}\bigg)^p
& \leq &
\E \bigg( \sum_{k=1}^m \sum_{n \geq 0} \1_{\{S_n \in J_k\}}\bigg)^p 	\\
& \leq &
(m^{p-1}\vee 1) \sum_{k=1}^m \E \bigg( \sum_{n \geq 0} \1_{\{S_n \in J_k\}}\bigg)^p.
\end{eqnarray*}
Therefore, it suffices to prove the result under the additional
assumption that the length of $J$ is at most $\varepsilon$.
Using the strong Markov property at
$\tau^{*}(J) := \inf\{n \geq 0: S_n \in J\}$ gives
\begin{eqnarray*}
\E \bigg( \sum_{n \geq 0} \1_{\{S_n \in J\}}\bigg)^p
& \leq &
\E \bigg( \1_{\{\tau^{*}(J)<\infty\}} \sum_{n \geq \tau^{*}(J)} \1_{\{|S_n -S_{\tau^{*}(J)}|<\varepsilon\}}\bigg)^p    \\
& = &
\Prob{\{\tau^{*}(J)<\infty\}} \E \bigg( \sum_{n \geq 0} \1_{\{|S_n|<\varepsilon\}}\bigg)^p < \infty.
\end{eqnarray*}
This proves the first assertion of the lemma. Concerning the second, assume that $\E N^*(x)^p<\infty$ for some $x \in \R$.
Then, for any $y > x$,
\begin{equation*}
\E N^*(y)^p
~\leq~ (2^{p-1} \vee 1) \bigg(\E N^*(x)^p+\E \bigg[\sum_{n \geq 0}\1_{\{x<S_n \leq y\}}\bigg]^p \bigg)
~<~ \infty,
\end{equation*}
where the last term is finite by the first part of the lemma.
\end{proof}

The following lemma summarizes properties of the functions $A_+$ and $J_+$ that are frequently used throughout the proofs.
These properties were known before and are stated here only for the reader's convenience.
Recall from \eqref{eq:A(x)} that 
\begin{equation*}
A_+(x) := \int_0^x \Prob\{\xi>y\} \, {\rm d}y = \E \min (\xi^+, x)
\quad	\text{and}	\quad	J_+(x):= \frac{x}{A_+(x)}
\end{equation*}
whenever $A_+(x) > 0$.
Further, recall that $\mathbb{U}_{p-1}(x) = \sum_{n \geq 1} n^{p-1} \Prob\{S_n \leq x\}$ and,
analogously, $\mathbb{U}_{p-1}^>(x) = \sum_{n \geq 1} n^{p-1} \Prob\{S_{\tau_n^*} \leq x\}$
where $\tau_n^*$ is the $n$th strictly ascending ladder index of the random walk $(S_n)_{n \geq 0}$.

\begin{Lemma}	\label{Lem:J_+}
Assume that $S_n \to \infty$ a.s. Then the following assertions are true:
\begin{itemize}
	\item[(a)]	$A_+(x) > 0$ for all $x > 0$; $A_+$ and $J_+$ are nondecreasing.
	\item[(b)]	$\lim_{x \to \infty} J_+(x) = \infty$.
	\item[(c)]	$J_+$ is subadditive, \textit{i.e.}, $J_+(x+y) \leq J_+(x)+J_+(y)$ for all $x, y \geq 0$. In particular,
				$J_+(x+y) \sim J_+(x)$ as $x \to \infty$ for any $y \in \R$.
	\item[(d)]	For any $p > 0$ if $\mathbb{U}_{p-1}(0)<\infty$, then
				\begin{equation}	\label{eq:asymp}
				\mathbb{U}_{p-1}(x) \asymp \mathbb{U}_{p-1}^>(x) \asymp J_+(x)^p
				\end{equation}
				as $x \to \infty$. Moreover, with $W(\cdot)$ denoting either $\tau^*(\cdot)$, $N^*(\cdot)$ or $\rho^*(\cdot)$,
				then $\E W(x)^p \asymp J_+(x)^p$ whenever $\E W(0)^p < \infty$.
\end{itemize}
\end{Lemma}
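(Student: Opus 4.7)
The plan is to handle the four assertions in order, exploiting throughout the concavity of $A_+$ and its consequences, and invoking Kesten--Maller type results only in the last part.

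For (a), I would first note that $S_n\to\infty$ a.s.\ forces $\Prob\{\xi>0\}>0$, since otherwise $\xi\le 0$ a.s., and then the standing assumption $\Prob\{\xi=0\}<1$ would yield $\E\xi^-\in(0,\infty]$, contradicting positive divergence by the SLLN. Hence $y\mapsto\Prob\{\xi>y\}$ is positive on some right-neighbourhood of $0$, giving $A_+(x)>0$ for every $x>0$; monotonicity of $A_+$ is immediate from its definition as an integral of a nonnegative function. Since the integrand $y\mapsto\Prob\{\xi>y\}$ is nonincreasing, $A_+$ is concave with $A_+(0)=0$; equivalently, the chord slope $A_+(x)/x$ is nonincreasing in $x$, so $J_+(x)=x/A_+(x)$ is nondecreasing. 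For (b), I split by whether $\E\xi^+<\infty$: in that case $A_+(x)\uparrow\E\xi^+<\infty$, forcing $J_+(x)\to\infty$; otherwise $A_+(x)/x$, being the Cesàro average of the nonincreasing null function $\Prob\{\xi>\cdot\}$, still tends to $0$, so again $J_+(x)\to\infty$.

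For (c), fix $x,y>0$ and, WLOG by symmetry, assume $A_+(x)\le A_+(y)$. Monotonicity of $A_+$ yields $A_+(x+y)\ge A_+(y)$, and therefore
\begin{equation*}
J_+(x+y)=\frac{x+y}{A_+(x+y)}\le\frac{x}{A_+(y)}+\frac{y}{A_+(y)}\le\frac{x}{A_+(x)}+\frac{y}{A_+(y)}=J_+(x)+J_+(y),
\end{equation*}
proving subadditivity. The asymptotic $J_+(x+y)\sim J_+(x)$ as $x\to\infty$ with fixed $y\in\R$ is then immediate: for $y\ge 0$, monotonicity and subadditivity give $J_+(x)\le J_+(x+y)\le J_+(x)+J_+(y)$, and dividing by $J_+(x)\to\infty$ yields the claim; for $y<0$, the symmetric decomposition $x=(x+y)+(-y)$ (legitimate once $x+y\ge 0$) gives $J_+(x+y)\le J_+(x)\le J_+(x+y)+J_+(-y)$, and the same argument applies.

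For (d), I would not reprove the asymptotic from scratch but rather cite \cite[Theorem\,2.1 and (2.9)--(2.10)]{Kesten+Maller:96}, which is precisely the statement that $\mathbb{U}_{p-1}^>(x)\asymp J_+(x)^p$ as $x\to\infty$ under $\mathbb{U}_{p-1}(0)<\infty$, and which also supplies $\E W(x)^p\asymp J_+(x)^p$ for $W\in\{\tau^*,N^*,\rho^*\}$. The equivalence $\mathbb{U}_{p-1}(x)\asymp\mathbb{U}_{p-1}^>(x)$ I would extract from the identity \eqref{eq:renewal measure - minimum}: convolving with the law $Q$ of $M^*=\inf_{n\ge 0}S_n$ multiplies the asymptotic by a finite constant since $Q$ has a bounded, tight support-tail when $S_n\to\infty$, and the power-moment analogue follows from the same convolution identity after applying the regularity $J_+(x-y)^p\sim J_+(x)^p$ supplied by (c) together with dominated convergence. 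The main obstacle is the asymptotic $\mathbb{U}_{p-1}^>(x)\asymp J_+(x)^p$ itself, which is the genuinely deep ingredient; since this is already recorded in the literature, the plan reduces to a careful bookkeeping argument rather than to a new proof.
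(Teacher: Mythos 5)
Parts (a)--(c) of your proposal are correct and follow essentially the same route as the paper. Your concavity argument for the monotonicity of $J_+$ is a small reformulation of the paper's change-of-variables representation $J_+(x)=\big(\int_0^1\Prob\{\xi>xy\}\,{\rm d}y\big)^{-1}$; your Ces\`{a}ro argument in (b) in fact covers both cases at once (it does not need the split on whether $\E\xi^+$ is finite), and your subadditivity proof in (c) does not actually need the WLOG, since $A_+(x+y)\geq A_+(x)$ and $A_+(x+y)\geq A_+(y)$ immediately give $J_+(x+y)=\frac{x}{A_+(x+y)}+\frac{y}{A_+(x+y)}\leq J_+(x)+J_+(y)$. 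These are minor streamlinings; the substance matches the paper.

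Part (d) contains a genuine gap in the passage from $\mathbb{U}_{p-1}^>(x)\asymp J_+(x)^p$ to $\mathbb{U}_{p-1}(x)\asymp\mathbb{U}_{p-1}^>(x)$. The convolution identity \eqref{eq:renewal measure - minimum} you invoke is a statement about the \emph{unweighted} renewal measure $\mathbb{U}=\mathbb{U}_0$ and $\mathbb{U}^>=\mathbb{U}_0^>$. There is no ``power-moment analogue'' of that identity for general $p$: decomposing $n$ as $\tau_m^*+k$ with $m$ the number of ladder epochs up to $n$ and $k$ the overshoot, the weight $n^{p-1}$ does not factor across the ladder decomposition, so the clean convolution structure collapses for $p\neq 1$. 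Furthermore, the phrase ``$Q$ has a bounded, tight support-tail'' is not accurate: $Q=\Prob\{M^*\in\cdot\}$ is merely a proper probability measure on $(-\infty,0]$, not one with bounded support, and concluding that convolving with $Q$ changes the asymptotic by a constant factor requires both the subadditivity from (c) and the finiteness of $\E J_+(|M^*|)^p$, which is a nontrivial consequence of the hypothesis $\mathbb{U}_{p-1}(0)<\infty$ rather than of tightness. The detour is, in any case, unnecessary: the cited \cite{Kesten+Maller:96} results give $\mathbb{U}_{p-1}(x)\asymp J_+(x)^p$ directly for the original walk $(S_n)_{n\geq 0}$ (and equally for the ladder walk, which drifts to $+\infty$ and has the same $J_+$), so $\mathbb{U}_{p-1}(x)\asymp\mathbb{U}_{p-1}^>(x)$ follows at once from two applications of that reference, which is precisely how the paper handles (d).
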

\begin{proof}
(a) Since $S_n \to \infty$ a.s.\ is assumed, $\Prob\{\xi^+ > 0\}=\Prob\{\xi > 0\} > 0$ and therefore $\Prob\{\xi>y\}>0$ in a right neighborhood of $0$.
The monotonicity of $A_+$ follows from its definition. The monotonicity of $J_+$ and assertion (b) follow from the following representation
\begin{equation*}
J_+(x) = \left(\int_0^1 \Prob\{\xi>xy\} \, {\rm d}y \right)^{-1} \!\!\!,
\quad	x > 0.
\end{equation*}
Regarding (c) notice that the subadditivity of $J_+$ follows from the monotonicity of $A_+$. $J_+(x+y) \sim J_+(x)$ as $x \to \infty$ immediately follows from the subadditivity of $J_+$ together with (b).

\noindent
(d) follows from equations \cite[Theorems\;2.1 and 2.2, Eq.\;(4.5)]{Kesten+Maller:96} and one of the displayed formulas on p.\;28 of the cited reference.
\end{proof}

\begin{Lemma}       \label{Lem:EJ^p+1xi^-<infty}
Let $p>0$ and assume $\lim_{k \to \infty} S_k=+\infty$ a.s.
Then the following assertions are equivalent:
\begin{align}
\E J_+ \Big(\Big| \min_{0 \leq k \leq \tau^*-1} S_k \Big|\Big)^{p+1}  ~&<~ \infty;	\\
\E J_+ \Big(\Big| \inf_{k \geq 0} S_k \Big| \Big)^p  ~&<~ \infty;	\\
\E J_+(\xi^-)^{p+1} ~&<~ \infty
\end{align}
where $\tau^*:=\inf\{k \in \N: S_k>0\}$.
\end{Lemma}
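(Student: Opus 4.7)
The plan is to establish the cycle of implications $(A)\Leftrightarrow(C)$ and $(A)\Leftrightarrow(B)$, where (A), (B), (C) denote the three displayed conditions in order. Throughout, write $Y_1:=\min_{0\leq k\leq \tau^*-1}S_k\leq 0$, so that $|Y_1|=\max_{0\leq k\leq \tau^*-1}(-S_k)$, and $M^*:=\inf_{k\geq 0}S_k\leq 0$.

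For $(A)\Leftrightarrow(C)$: the implication $(A)\Rightarrow(C)$ is immediate once one notes that $|Y_1|\geq \xi_1^-$ always: if $\xi_1<0$, then $\tau^*\geq 2$ and $Y_1\leq S_1=\xi_1$, so $|Y_1|\geq|\xi_1|=\xi_1^-$; if $\xi_1\geq 0$, the inequality is trivial. Monotonicity of $J_+$ (Lemma \ref{Lem:J_+}(a)) then gives $\E J_+(\xi^-)^{p+1}\leq\E J_+(|Y_1|)^{p+1}$. For $(C)\Rightarrow(A)$, the estimate $-S_k\leq\sum_{j=1}^k\xi_j^-$ yields $|Y_1|\leq\sum_{j=1}^{\tau^*}\xi_j^-$, so by subadditivity of $J_+$ (Lemma \ref{Lem:J_+}(c)) we obtain $J_+(|Y_1|)\leq\sum_{j=1}^{\tau^*}J_+(\xi_j^-)$. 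An application of Gut's randomly-stopped-sum inequality \cite[Theorem\;5.2]{Gut:09} at the stopping time $\tau^*$ gives $\E J_+(|Y_1|)^{p+1}\leq\mathrm{const}\cdot\E(\tau^*)^{p+1}\cdot\E J_+(\xi^-)^{p+1}$, and $\E(\tau^*)^{p+1}<\infty$ follows from (C) together with $S_n\to\infty$ a.s.\ via \cite[Theorem\;2.1 and formulae (2.9), (2.10)]{Kesten+Maller:96} (the same reduction used in Case 2 of the proof of Theorem \ref{Thm:power_N}).

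For $(A)\Leftrightarrow(B)$: decompose the walk along the strict ascending ladder epochs and set $Y_n:=\min_{\tau_{n-1}^*\leq k\leq \tau_n^*-1}(S_k-S_{\tau_{n-1}^*})$. The $Y_n$ are i.i.d.\ copies of $Y_1$, each $|Y_n|$ is independent of $S_{\tau_{n-1}^*}$, and $M^*=\min_{n\geq 1}(S_{\tau_{n-1}^*}+Y_n)$, whence $|M^*|=\sup_{n\geq 1}(|Y_n|-S_{\tau_{n-1}^*})_+$. For $(A)\Rightarrow(B)$: a union bound gives
\begin{equation*}
\Prob\{|M^*|>x\}\leq\sum_{n\geq 1}\Prob\{|Y_n|>x+S_{\tau_{n-1}^*}\}=\E\,\mathbb{U}^{>}\bigl((|Y_1|-x)^+\bigr),
\end{equation*}
so using $\E J_+(|M^*|)^p=\int_0^\infty p J_+(x)^{p-1}J_+'(x)\Prob\{|M^*|>x\}\,\mathrm{d}x$ and Fubini, followed by $\mathbb{U}^{>}(|Y_1|-x)\leq\mathbb{U}^{>}(|Y_1|)$ for $0\leq x\leq|Y_1|$, one gets $\E J_+(|M^*|)^p\leq\E\bigl[J_+(|Y_1|)^p\,\mathbb{U}^{>}(|Y_1|)\bigr]$. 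The asymptotics $\mathbb{U}^{>}(y)\asymp J_+(y)$ from Lemma \ref{Lem:J_+}(d) then yield $\E J_+(|M^*|)^p\leq C\,\E J_+(|Y_1|)^{p+1}+C'<\infty$.

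The hard direction $(B)\Rightarrow(A)$ requires producing the extra factor of $J_+$; this will be handled by an argument modeled on \cite[Lemma\;3.4]{AlsIks:09} (the same tool invoked in Subcase 2b of the proof of Theorem \ref{Thm:power_rho}). Applying that lemma to the sequence with $M_k=e^{-(S_{\tau_k^*}-S_{\tau_{k-1}^*})}\in(0,1]$ and $Q_k=e^{|Y_k|}$ (so that $\sup_k\Pi_{k-1}Q_k=e^{|M^*|}$), one obtains a matching lower bound
\begin{equation*}
\E J_+(|M^*|)^p\geq c\,\E\bigl[\1_{\{|Y_1|>2\delta\}}J_+(|Y_1|/2)^{p+1}\bigr],
\end{equation*}
which together with $J_+(y/2)\geq J_+(y)/2$ (subadditivity) forces $\E J_+(|Y_1|)^{p+1}<\infty$. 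The main obstacle is verifying that the hypotheses of \cite[Lemma\;3.4]{AlsIks:09} transfer to this setting, which amounts to checking $\Prob\{\inf_n(S_{\tau_{n-1}^*}-|Y_n|)\geq -\varepsilon\}>0$ for some $\varepsilon>0$; this follows since $M^*>-\infty$ a.s.\ as $S_n\to\infty$.
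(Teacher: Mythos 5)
The paper omits a proof of this lemma (it only cites predecessors and remarks that the arguments of Janson and Kesten--Maller ``can be adopted''), so there is no paper proof to compare against line by line; you are supplying what the paper declines to write out. Your cycle of implications is sound in outline, and the $(A)\Leftrightarrow(C)$ part is correct: $(A)\Rightarrow(C)$ via $|Y_1|\geq\xi_1^-$ and monotonicity, $(C)\Rightarrow(A)$ via subadditivity of $J_+$, the randomly-stopped-sum inequality \cite[Theorem\;5.2]{Gut:09}, and $\E(\tau^*)^{p+1}<\infty$ from \cite[Theorem\;2.1, (2.9), (2.10)]{Kesten+Maller:96} as in the proof of Theorem\;\ref{Thm:power_N}. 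The $(A)\Rightarrow(B)$ direction also works as written (modulo the trivial inaccuracy that the layer-cake identity should read $\E J_+(|M^*|)^p = J_+(0)^p + \int_0^\infty pJ_+(x)^{p-1}J_+'(x)\Prob\{|M^*|>x\}\,{\rm d}x$ since $J_+(0)>0$; the constant does not affect finiteness).

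There is, however, a genuine gap in $(B)\Rightarrow(A)$. When you invoke \cite[Lemma\;3.4]{AlsIks:09} with $M_k=e^{-(S_{\tau^*_k}-S_{\tau^*_{k-1}})}$, the $J$-function supplied by that lemma is the one built from the distribution of $\log M^{-1}$, i.e.\ from the ladder height $\xi^>:=S_{\tau^*}$. Call it $J^{>}(x):=x/\int_0^x\Prob\{\xi^{>}>y\}\,{\rm d}y$. The paper's display \eqref{eq:AlsIks:09} exhibits $J_+$ in that slot only because there $M=e^{-\xi^+}$, in which case the $J$ of \cite{AlsIks:09} happens to coincide with $J_+$; with your choice of $M_k$ it does not, and the lower bound you get is
\begin{equation*}
\E J_+(|M^*|)^p~\geq~\alpha\,\E\Big[\1_{\{|Y_1|>2\delta\}}\,J_+\big(|Y_1|/2\big)^p\,J^{>}\big(|Y_1|/2\big)\Big],
\end{equation*}
with a factor $J^{>}$, not $J_+$. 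To conclude you must add the observation that $J^{>}(x)\asymp J_+(x)$ as $x\to\infty$; this follows by combining Erickson's two-sided estimate $\mathbb{U}^{>}(x)\asymp J^{>}(x)$ (used in the paper as \eqref{eq:renewal fct estimate2}) with $\mathbb{U}^{>}(x)\asymp J_+(x)$ from Lemma\;\ref{Lem:J_+}(d). Once that is inserted the argument closes as you intended. The verification of the positivity hypothesis of \cite[Lemma\;3.4]{AlsIks:09} via $M^*>-\infty$ a.s.\ is correct.
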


Lemma\;\ref{Lem:EJ^p+1xi^-<infty} has several predecessors, \textit{e.g.}\ 
\cite[Theorem\;1]{Janson:86}, \cite[Theorem\;3]{Alsmeyer:92}, \cite[Proposition\;4.1]{Kesten+Maller:96},
\cite[Lemma\;3.5]{AlsIks:09}.
Even though Lemma\;\ref{Lem:EJ^p+1xi^-<infty} does not follow directly from either of these results,
the proofs given in \cite{Janson:86} and \cite{Kesten+Maller:96} can be
adopted to treat the present case after the observation
that the function $x \mapsto J_+(x)$ is nondecreasing and subadditive.
Therefore, we omit a proof.

\subsection{Elementary Facts}		\label{subsec:aux_elementary}

\begin{Lemma}	\label{Lem:Gut's_estimate_modified}
Let $1 \leq p = n+ \delta$ with $n \in \N_0$ and $\delta \in (0,1]$. Then, for any $x,y \geq 0$,
\begin{equation}	\label{eq:Gut's_estimate_modified}
(x+y)^p ~\leq~ x^p+y^p+p2^{p-1}(xy^{p-1} + x^ny^{\delta}).
\end{equation}
\end{Lemma}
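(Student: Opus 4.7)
The plan is to reduce the inequality to a routine case split according to whether $x\leq y$ or $y\leq x$, with each of the two cross terms on the right-hand side handling one regime. The degenerate cases $x=0$ and $y=0$ can be dismissed by direct inspection (both sides reduce to $\max(x,y)^p$ or a trivial bound), so from here on I would assume $x,y>0$.

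For the regime $x\leq y$, the natural tool is the mean value theorem applied to $t\mapsto t^p$ on the interval $[y,x+y]$, which yields
\begin{equation*}
(x+y)^p - y^p ~\leq~ p\, x\, (x+y)^{p-1}.
\end{equation*}
The hypothesis $p\geq 1$ enters here crucially, because it lets me further bound $(x+y)^{p-1}\leq(2y)^{p-1}=2^{p-1}y^{p-1}$. This already produces $(x+y)^p\leq y^p + p\,2^{p-1} x y^{p-1}$, and since both $x^p$ and $x^n y^\delta$ are nonnegative the right-hand side of \eqref{eq:Gut's_estimate_modified} is reached.

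Symmetrically, in the regime $y\leq x$ the same mean value argument (now on $[x,x+y]$) gives $(x+y)^p\leq x^p + p\,2^{p-1} x^{p-1} y$. The remaining task is to check that $x^{p-1}y\leq x^n y^\delta$ under $y\leq x$, $\delta\in(0,1]$, $p-1=n+\delta-1$. Computing the ratio,
\begin{equation*}
\frac{x^n y^\delta}{x^{p-1} y} ~=~ x^{1-\delta} y^{\delta-1} ~=~ \left(\frac{x}{y}\right)^{1-\delta} ~\geq~ 1,
\end{equation*}
because $1-\delta\geq 0$ and $x/y\geq 1$. Hence $x^{p-1}y\leq x^n y^\delta$, and adding the nonnegative terms $y^p$ and $p\,2^{p-1} x y^{p-1}$ on the right closes this case.

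There is no genuine obstacle in the proof; the only ``insight'' is recognising why the right-hand side takes its asymmetric form in the first place: the term $xy^{p-1}$ is exactly what comes out of the $x\leq y$ bound, while $x^n y^\delta$ is precisely the dominant replacement for $x^{p-1}y$ in the $y\leq x$ bound. Once the case split is made, all intermediate estimates are one-line computations.
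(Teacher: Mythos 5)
Your proof is correct and follows essentially the same path as the paper's: a case split on which of $x,y$ is larger, with the cross term $xy^{p-1}$ absorbing the case $x\leq y$ and $x^ny^{\delta}$ absorbing the case $y\leq x$. The only cosmetic difference is that you apply the mean value theorem to $t\mapsto t^p$ directly, while the paper normalizes to $(1+r)^p$ with $r=\min(x,y)/\max(x,y)\leq 1$ and applies the mean value theorem for integrals there (then uses $r\leq r^\delta$, which is precisely your observation that $x^{p-1}y\leq x^n y^\delta$ when $y\leq x$).
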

This estimate is a variant of an estimate we have learned from \cite{Gut:74}.
For the reader's convenience, we include a brief proof which is a slight modification of the argument given in the cited reference.
\begin{proof}
For any $0 \leq r \leq 1$, we have $(1+r)^p = 1+p \int_0^r (1+t)^{p-1} {\mathrm d}t$.
By the mean value theorem for integrals, for some $\gamma \in (0,r)$,
\begin{equation}
(1\!+\!r)^p ~=~ 1+p r (1\!+\!\gamma)^{p-1} ~\leq~ 1+ p2^{p-1}r ~\leq~ 1+p2^{p-1}r^{\delta},	\label{eq:mean_value_estimate}
\end{equation}
where in the last step we have used that $0 \leq r \leq 1$. Now let $x, y \geq 0$.
When $x \leq y$, use the first estimate in \eqref{eq:mean_value_estimate} to get
$(x+y)^p \leq y^p +p2^{p-1}xy^{p-1}$. When $y \leq x$ use the second estimate in \eqref{eq:mean_value_estimate} to infer 
$(x+y)^p \leq x^p + p2^{p-1} x^n y^{\delta}$. Thus, in any case, \eqref{eq:Gut's_estimate_modified} holds.
\end{proof}

The next auxiliary result is an elementary consequence of a version of the summation by parts formula.

\begin{Lemma}   \label{Lem:summation_by_parts}
Let $b_n \geq 0$ for all $n \in \N$ and $p>1$. Then
\begin{equation*}
\sum_{n \geq 1} n^{p-1} \sum_{k=n}^\infty b_k ~<~ \infty
\quad	\text{iff}	\quad
\sum_{n \geq 1} n^p b_n ~<~ \infty.
\end{equation*}
\end{Lemma}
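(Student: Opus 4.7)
The plan is to establish the equivalence by a single application of Tonelli's theorem to interchange the order of summation, followed by a routine comparison of the inner sum with a power of its upper endpoint.

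First I would rewrite the left-hand series by switching summations. Since all terms are nonnegative,
\begin{equation*}
\sum_{n \geq 1} n^{p-1} \sum_{k=n}^{\infty} b_k ~=~ \sum_{k \geq 1} b_k \sum_{n=1}^{k} n^{p-1}.
\end{equation*}
The problem thus reduces to controlling the partial sums $\Sigma_p(k) := \sum_{n=1}^{k} n^{p-1}$.

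Next I would use the standard integral comparison, which for $p>1$ gives
\begin{equation*}
\frac{k^p}{p} ~=~ \int_0^k x^{p-1}\,\mathrm{d}x ~\leq~ \Sigma_p(k) ~\leq~ \int_0^{k+1} x^{p-1}\,\mathrm{d}x ~=~ \frac{(k+1)^p}{p}.
\end{equation*}
Combined with $(k+1)^p \leq 2^p k^p$ for $k \geq 1$, this yields constants $c_1, c_2 > 0$ (depending only on $p$) such that $c_1 k^p \leq \Sigma_p(k) \leq c_2 k^p$ for all $k \in \N$. Plugging these two-sided bounds into the rearranged identity gives
\begin{equation*}
c_1 \sum_{k \geq 1} k^p b_k ~\leq~ \sum_{n \geq 1} n^{p-1} \sum_{k=n}^{\infty} b_k ~\leq~ c_2 \sum_{k \geq 1} k^p b_k,
\end{equation*}
from which the claimed equivalence is immediate.

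There is no real obstacle here; the only point that deserves a word of care is that $p>1$ is genuinely needed for the lower bound $\Sigma_p(k) \geq c_1 k^p$ (for $p=1$ one would only get a logarithmic factor, and for $p<1$ the inner sum would be bounded), which is why the hypothesis appears in the statement.
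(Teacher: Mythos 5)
Your proof is correct and, at its core, rests on the same two ingredients as the paper's: interchanging the order of summation, and the two-sided comparison $\sum_{n=1}^k n^{p-1} \asymp k^p$. Where you diverge is in the mechanics of the interchange. The paper proves a \emph{finite} rearrangement identity
\begin{equation*}
\sum_{n=1}^m n^{p-1}\sum_{k=n}^\infty b_k ~=~ \sum_{n=1}^m n^{p-1} \sum_{k=m}^\infty b_k + \sum_{k=1}^{m-1} b_k \sum_{n=1}^k n^{p-1},
\end{equation*}
and then handles the two directions of the equivalence separately, each time taking $m\to\infty$ and arguing that the tail term vanishes. Your direct appeal to Tonelli collapses this bookkeeping into one line, and your two-sided bound $c_1 k^p \leq \Sigma_p(k) \leq c_2 k^p$ then delivers both directions at once. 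This is a genuine simplification of the same basic idea.

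One small correction to your closing remark: the hypothesis $p>1$ is not actually needed for the lower bound $\Sigma_p(k) \gtrsim k^p$. For $p=1$ one has $\Sigma_1(k) = \sum_{n=1}^k n^0 = k = k^1$ exactly (no logarithmic factor arises), and for $0<p<1$ the exponent $p-1 \in (-1,0)$ makes $x^{p-1}$ decreasing, so the integral comparison gives $\Sigma_p(k) \geq \int_1^{k+1} x^{p-1}\,\mathrm{d}x = ((k+1)^p - 1)/p \asymp k^p$, with a matching upper bound. The inner sum only stays bounded when $p \leq 0$. Thus the equivalence in the lemma in fact holds for all $p>0$; the paper's restriction $p>1$ is stronger than necessary. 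This does not affect the validity of your proof for the stated range.
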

\begin{proof}
For arbitrary $m \in \N$,
\begin{equation}    \label{eq:summation_by_parts}
\sum_{n=1}^m n^{p-1}\sum_{k=n}^\infty b_k
~=~	\sum_{n=1}^m n^{p-1} \sum_{k=m}^\infty b_k + \sum_{k=1}^{m-1} b_k \sum_{n=1}^k n^{p-1}.
\end{equation}
In particular,
\begin{equation*}
\sum_{n=1}^m n^{p-1} \sum_{k=n}^\infty b_k ~\geq~ \sum_{n=1}^{m-1} b_n\sum_{k=1}^n k^{p-1}.
\end{equation*}
Consequently, if $\sum_{n \geq 1} n^{p-1}\sum_{k=n}^\infty b_k$ converges, then so does
$\sum_{n \geq 1} b_n\sum_{k=1}^n k^{p-1}$ and thus also $\sum_{n \geq 1} b_n n^p$.
Conversely, if the series $\sum_{n \geq 1} n^p b_n$ converges, then $\sum_{n \geq 1} b_n\sum_{k=1}^n k^{p-1}$ converges
and, in particular,
\begin{equation*}
\lim_{m\to\infty} \sum_{n=m}^{\infty} b_n \sum_{k=1}^n k^{p-1} ~=~ 0.
\end{equation*}
Further,
\begin{equation*}
0 ~\leq~ \sum_{k=1}^m k^{p-1} \sum_{n=m}^\infty b_n ~\leq~ \sum_{n=m}^{\infty} b_n\sum_{k=1}^n k^{p-1} ~\to~ 0
\quad   \text{as } m \to \infty.
\end{equation*}
Letting $m$ tend to $\infty$ in \eqref{eq:summation_by_parts}, we conclude that $\sum_{n \geq 1} n^{p-1}\sum_{k=n}^\infty b_k$ converges.
\end{proof}

\footnotesize
\def\cprime{$'$}

\end{document}